\newtheorem{theo+}           {Theorem}
\newtheorem{prop+}           {Proposition}
\newtheorem{coro+}           {Corollary}
\newtheorem{lemm+}           {Lemma}
\newtheorem{conjecture}      {Conjecture}
\theoremstyle{definition}
\newtheorem{defi+}           {Definition}
\newtheorem{problem}         {Problem}
\theoremstyle{remark}
\newtheorem{rema+}           {Remark}
\newenvironment{theorem}{\begin{theo+}}{\end{theo+}}
\newenvironment{proposition}{\begin{prop+}}{\end{prop+}}
\newenvironment{corollary}{\begin{coro+}}{\end{coro+}}
\newenvironment{lemma}{\begin{lemm+}}{\end{lemm+}}
\newenvironment{remark}{\begin{rema+}}{\end{rema+}}
\newenvironment{definition}{\begin{defi+}}{\end{defi+}}
\newcommand{\al}{\alpha}
\newcommand{\bet}{\beta}
\newcommand {\CC}{\mathcal C}
\newcommand {\bCP} {\mathbb {CP}}
\newcommand {\Ga} {\Gamma}
\newcommand {\ga} {\gamma}
\newcommand {\eps} {\epsilon}
\newcommand \dq {\mathfrak d(z)}
\newcommand \dqn {\mathfrak d_n(z)}
\newcommand{\la}{\lambda}
\newcommand{\bC}{\mathbb C}
\newcommand{\bR}{\mathbb R}
\newcommand{\C}{\mathcal C}
\newcommand{\NN}{\mathbb N}
\newcommand{\be}{\begin{equation}}
\newcommand{\ee}{\end{equation}}
\newcommand{\bl}{\begin{lemma}}
\newcommand{\el}{\end{lemma}}
\newtheorem{lemmaA}{Lemma}
\newcommand{\blemmaA}{\begin{lemmaA}}
\newcommand{\elemmaA}{\end{lemmaA}}
\newcommand{\bprop}{\begin{proposition}}
\newcommand{\eprop}{\end{proposition}}
\newcommand{\bt}{\begin{theorem}}
\newcommand{\et}{\end{theorem}}
\newcommand{\bc}{\begin{corollary}}
\newcommand{\ec}{\end{corollary}}
\newcommand{\bcon}{\begin{conjecture}}
\newcommand{\econ}{\end{conjecture}}
\newcommand{\supp}{\operatorname{supp}}
\newcommand{\locint}{L^1_{loc}}
\def\newop#1{\expandafter\def\csname #1\endcsname{\mathop{\rm
#1}\nolimits}}
\begin{document}
          \numberwithin{equation}{section}

          \title[Root-counting measures and quadratic differentials]
          {Root-counting measures of Jacobi polynomials \\ and  topological types and  critical
          geodesics\\
          of  related quadratic differentials}


\author[B.~Shapiro]{Boris Shapiro}
\address{Department of Mathematics, Stockholm University, SE-106 91
Stockholm,
         Sweden}
\email{shapiro@math.su.se}

\author[A.~Solynin]{Alexander Solynin}
\address{Department of Mathematics and Statistics, Texas Tech University, Box 41042, Lubbock, TX 79409, USA}
\email{alex.solynin@math.ttu.edu}

\date{\today}
\keywords{Jacobi polynomials, asymptotic root-counting measure,
quadratic differentials, critical trajectories}
 \subjclass[2010]{30C15, 31A35, 34E05}

\begin{abstract}%
 Two main topics of this paper are asymptotic distributions of zeros of
Jacobi polynomials and topology of critical trajectories of
related quadratic differentials. First, we will discuss recent
developments and some new results concerning the limit of the
root-counting measures of these polynomials. In particular, we
will show that the support of the limit measure sits on the
critical trajectories of a quadratic differential of the form
$Q(z)\,dz^2=\frac{az^2+bz+c}{(z^2-1)^2}\,dz^2$. Then we will give
a complete classification, in terms of complex parameters $a$,
$b$, and $c$, of possible topological types of
critical geodesics for the quadratic differential of this type. %
\end{abstract}%

\dedicatory {To Mikael Passare, in memoriam}

\maketitle

\section{Introduction: From Jacobi polynomials to quadratic
differentials} \label{Section-1} %

Two main themes of this work are asymptotic behavior of zeros of
certain polynomials and topological properties of related
quadratic differentials.  The study of asymptotic root
distributions of hypergeometric, Jacobi, and Laguerre polynomials
with variable real  parameters, which grow linearly with  degree,
became a rather hot topic in recent publications, which attracted
attention of many authors \cite{CC}, \cite{DMO}, \cite{DrDu},
\cite{DrJo}, \cite{DuGu}, \cite{KuMF}, \cite{MFMGO}, \cite{MMO},
\cite{QW}.
In this paper, we survey some known results in this area and
present some new results keeping focus on Jacobi polynomials.

 Recall that the   Jacobi polynomial $P_n^{(\al,\bet)}(z)$ of degree $n$
 with complex parameters $\al,\bet$ is defined  by
  $$P_n^{(\al,\bet)}(z)=2^{-n}\sum_{k=0}^n\binom {n+\al}{n-k}\binom{n+\bet}{k}(z-1)^k(z+1)^{n-k},$$
 where  $\binom{\gamma}{k}=\frac{\gamma(\gamma-1)\dots (\gamma-k+1)}{k!}$ with a non-negative integer $k$
 and  an arbitrary complex number $\gamma$.
 Equivalently,  $P_n^{(\al,\bet)}(z)$ can be defined by the well-known Rodrigues
 formula: %
$$ %
P_n^{(\al,\bet)}(z)=\frac{1}{2^nn!}(z-1)^{-\al}(z+1)^{-\bet}
\left(\frac{d}{dz}\right)^n[(z-1)^{n+\al}(z+1)^{n+\bet}].
$$ %
 The following statement, which can be found, for instance, in
\cite[Proposition 2]{MFMGO}, gives an important characterization
of Jacobi polynomials as solutions of second order differential
equation.
\begin{proposition}\label{Proposition-1} %
For arbitrary fixed complex numbers $\al$ and $\bet$, the
differential equation %
$$ 
(1-z^2)y^{\prime\prime}+(\bet -\al-(\al+\bet+2)z)y'+\la y=0 %
$$ 
with a spectral parameter $\la$ has a non-trivial polynomial
solution of degree $n$ if and only if $\la=n(n+\al+\bet+1)$.  This
polynomial solution is unique (up to a constant factor) and
coincides with $P_n^{(\al,\bet)}(z)$. %
\end{proposition}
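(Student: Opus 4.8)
The plan is to split the equivalence and the identification into three parts: (i)~necessity of the value $\lambda=n(n+\alpha+\beta+1)$; (ii)~existence and uniqueness (up to a scalar) of a polynomial solution of degree $n$ once $\lambda$ has this value; and (iii)~identification of that solution with $P_n^{(\alpha,\beta)}$. Part (i) is immediate: writing a hypothetical polynomial solution as $y=a_nz^n+(\text{lower order})$ with $a_n\neq0$ and comparing the coefficients of $z^n$ on both sides of the equation — the term $(1-z^2)y''$ contributes $-n(n-1)a_n$, the term $-(\alpha+\beta+2)z\,y'$ contributes $-n(\alpha+\beta+2)a_n$, and $\lambda y$ contributes $\lambda a_n$, while $(\beta-\alpha)y'$ has degree $<n$ — forces $\lambda=n(n-1)+n(\alpha+\beta+2)=n(n+\alpha+\beta+1)$.

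For part (ii) I would work with the linear operator $L[y]=(1-z^2)y''+\bigl(\beta-\alpha-(\alpha+\beta+2)z\bigr)y'$. A direct computation gives $L[z^m]=-m(m+\alpha+\beta+1)z^m+(\text{terms of degree}<m)$, so $L$ preserves each space $\mathbb{C}[z]_{\le N}$ of polynomials of degree at most $N$ and, in the basis $1,z,\dots,z^N$, is triangular with diagonal entries $\mu_m:=-m(m+\alpha+\beta+1)$. A polynomial of degree $\le N$ solves the differential equation precisely when it is an eigenvector of $L|_{\mathbb{C}[z]_{\le N}}$ for the eigenvalue $-\lambda$; with $\lambda=n(n+\alpha+\beta+1)$ this eigenvalue is $\mu_n$, and provided the $\mu_m$ are pairwise distinct, solving the eigenvector system from the top coordinate downward shows that the solution space is one-dimensional, that its generator has degree exactly $n$, and that no polynomial solution of larger degree exists. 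This yields the existence and uniqueness assertions.

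For part (iii) it suffices to verify that $P_n^{(\alpha,\beta)}$ solves the equation, after which the uniqueness from (ii) identifies it with the eigenvector found there. The cleanest route is through the Rodrigues formula: the function $\rho(z)=(z-1)^{n+\alpha}(z+1)^{n+\beta}$ satisfies the first-order identity $(z^2-1)\rho'=\bigl((2n+\alpha+\beta)z+(\alpha-\beta)\bigr)\rho$, obtained from $\rho'/\rho=\tfrac{n+\alpha}{z-1}+\tfrac{n+\beta}{z+1}$; differentiating this identity $n+1$ times by the Leibniz rule — only finitely many terms survive on each side, since the polynomial coefficients have degree $\le2$ — produces a second-order differential equation for $v=\rho^{(n)}$, and the substitution $v=(z-1)^\alpha(z+1)^\beta\cdot c\cdot P_n^{(\alpha,\beta)}$, which is exactly the content of the Rodrigues formula (and which at the same time exhibits $P_n^{(\alpha,\beta)}$ as a genuine polynomial), converts that equation into the stated one with $\lambda=n(n+\alpha+\beta+1)$.

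I expect the real obstacle to be the degenerate parameter range, where the $\mu_m$ are no longer distinct. One has $\mu_n=\mu_k$ for some nonnegative integer $k\neq n$ exactly when $\alpha+\beta=-(n+k+1)$, i.e., when $\alpha+\beta$ is a sufficiently negative integer; for such $(\alpha,\beta)$ the back-substitution in (ii) can break down, the space of degree-$n$ polynomial solutions can fail to be one-dimensional (or to exist at all), and the leading coefficient of $P_n^{(\alpha,\beta)}$, which equals $2^{-n}\binom{2n+\alpha+\beta}{n}$, can vanish, so that the polynomial drops in degree — already $P_2^{(0,-3)}\equiv1$ and $P_2^{(-2,-2)}\equiv0$. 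For each fixed $n$ these are only finitely many exceptional pairs $(\alpha,\beta)$; they must be inspected directly (or excluded), and away from them the three parts above constitute a complete proof.
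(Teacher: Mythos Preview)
The paper does not supply a proof of this proposition at all: it is quoted as a known fact with a reference to \cite[Proposition~2]{MFMGO}. So there is no ``paper's own proof'' to compare against, and your attempt must be judged on its own merits.

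Your three-part scheme is the standard one and is carried out correctly in the generic case. Part~(i) is a clean leading-coefficient argument; part~(ii) is the usual upper-triangular reduction of $L$ on $\mathbb{C}[z]_{\le N}$, and your identification of the diagonal entries $\mu_m=-m(m+\alpha+\beta+1)$ is right; part~(iii) via the Rodrigues formula and Leibniz is the cleanest route to the differential equation for $P_n^{(\alpha,\beta)}$.

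Your closing paragraph is the most valuable part of the write-up, because it flags a genuine issue with the \emph{statement} rather than with your proof. When $\alpha+\beta+1\in\{-1,-2,\ldots,-(2n-1)\}$ the eigenvalues $\mu_m$ collide, and the proposition as literally stated can fail: your examples $P_2^{(0,-3)}\equiv 1$ and $P_2^{(-2,-2)}\equiv 0$ are correct, and in the second case one checks directly that the equation with $\lambda=-2$ has the two-dimensional solution space $\mathrm{span}\{z,\,1+z^2\}$ inside $\mathbb{C}[z]_{\le 2}$, so neither uniqueness nor the identification with $P_n^{(\alpha,\beta)}$ survives. This is not a gap in your argument but a caveat about the proposition itself; the cited source presumably either excludes these parameters or interprets the claim in a way that accommodates them. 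Away from this discrete exceptional set your proof is complete.
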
 %

Working with root distributions of polynomials, it is convenient
to use root-counting measures and their Cauchy transforms, which
are defined as follows. %

\begin{definition} %
For a polynomial $p(z)$ of  degree $n$ with
(not necessarily distinct) roots    $\xi_1,...,\xi_n$,  its {\it
root-counting measure} $\mu_p$ is defined as
$$\mu_p=\frac{1}{n}\sum_{i=1}^n\delta_{\xi_i},$$ %
  where
$\delta_\xi$ is the Dirac measure supported at $\xi$. %
\end{definition}

\begin{definition} %
 Given a finite complex-valued  Borel  measure $\mu$ compactly supported in $\bC,$
its {\it Cauchy transform} $\C_\mu$ is defined as
\begin{equation} \label{1.1}%
\C_\mu(z) =\int_{\bC} \frac{d\mu(\xi)}{z-\xi}. %
\end{equation} %
and  its {logarithmic potential} $u_\mu$ is defined as
$$u_\mu(z) =\int_{\bC} \log|{z-\xi}|{d\mu(\xi)}.$$
 \end{definition} %
We note that the integral in (\ref{1.1}) converges for all $z$,
for which the Newtonian potential $U_{|\mu|}(z)=\int_{\bC}
\frac{d|\mu|(\xi)}{|\xi-z|}$ of $\mu$ is finite, see e.g.
\cite[Ch. 2]{Ga}.

In case when $\mu=\mu_p$ is the root-counting measure of a polynomial $p(z)$,
we will write $\C_p$ instead of $\C_{\mu_p}$. It follows from
Definitions~1 and 2 that  the Cauchy transform $\C_p(z)$ of the
root-counting measure of a monic polynomial $p(z)$ of degree $n$
coincides
with the normalized logarithmic derivative of $p(z)$; i.e., %
 \begin{equation}\label{Cauchy}
\C_p(z)=\frac{p'(z)}{np(z)}=\int_{\bC} \frac{d\mu_p(\xi)}{z-\xi},
\end{equation}
and its logarithmic potential $u_p(z)$ is given by the formula:
\begin{equation}\label{log}
u_p(z)= \frac{1}{n}\log |p(z)|=\int_{\bC} \log|{z-\xi}|{d\mu_p(\xi)}.\end{equation}

Let  $\{p_n(z)\}$ be a sequence of  Jacobi polynomials
$p_n(z)=P_n^{(\al_n,\bet_n)}(z)$ and let $\{\mu_n\}$ be the
corresponding sequence  of their root-counting
 measures. The main question we are going to address in this paper is the following:

\begin{problem}
{ Assuming that the sequence $\{\mu_n\}$ weakly converges  to a measure $\mu$ compactly supported in
 $\bC$, what can be said about properties of the support of the measure $\mu$
 and about its Cauchy transform $\C_\mu$?}
\end{problem}

\medskip
Regarding the Cauchy transform $\C_\mu$, our main result in this
direction is the following theorem.

 \begin{theorem}\label{Theorem-1} Suppose that a sequence  $\{p_n(z)\}$ of Jacobi polynomials $p_n(z)=P_n^{(\al_n,\bet_n)}(z)$ satisfies conditions:

 \noindent
  {\rm(a)} the limits $A=\lim_{n\to\infty}\frac{\al_n}{n}$ and $B=\lim_{n\to\infty}\frac{\bet_n}{n}$
  exist, and  $1+A+B\neq 0$;

 \noindent
  {\rm(b)}
the sequence $\{\mu_n\}$ of the root-counting
measures  converges weakly to a probability measure
 $\mu$, which is compactly supported in $\bC$.

Then  the Cauchy transform $\C_\mu$ of the limit measure $\mu$
satisfies almost everywhere in $\bC$ the  quadratic
equation:
  \begin{equation}\label{1.2}
  (1-z^2)\C^2_\mu-((A+B)z+A-B)\C_\mu+A+B+1=0.
  \end{equation}
 \end{theorem}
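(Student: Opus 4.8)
The plan is to start from the differential equation satisfied by $p_n = P_n^{(\al_n,\bet_n)}$ given in Proposition \ref{Proposition-1}, namely
$$(1-z^2)p_n'' + (\bet_n - \al_n - (\al_n+\bet_n+2)z)p_n' + n(n+\al_n+\bet_n+1)p_n = 0,$$
and divide through by $n^2 p_n(z)$ so as to express everything in terms of the Cauchy transform $\C_{p_n}(z) = p_n'(z)/(n p_n(z))$ from \eqref{Cauchy}. The first-order term becomes $\frac{1}{n}\big(\tfrac{\bet_n}{n} - \tfrac{\al_n}{n} - (\tfrac{\al_n}{n}+\tfrac{\bet_n}{n}+\tfrac2n)z\big)\C_{p_n}$, which carries an explicit factor $1/n$ and whose bracket converges to $(B - A - (A+B)z)$ by hypothesis (a); hence this whole term tends to $0$ pointwise off the support. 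The zeroth-order term is $\tfrac{n+\al_n+\bet_n+1}{n} \to 1 + A + B$. The only delicate term is the second-order one: one writes $\frac{p_n''}{n^2 p_n} = \frac{1}{n}\Big(\C_{p_n}'(z) + \tfrac{n-1}{n}\C_{p_n}^2(z)\Big)$ using $p_n'' = (p_n')' $ and $(p_n'/p_n)' = p_n''/p_n - (p_n'/p_n)^2$, so after multiplying by $(1-z^2)$ this contributes $(1-z^2)\big(\tfrac1n \C_{p_n}' + \tfrac{n-1}{n}\C_{p_n}^2\big)$, whose limit should be $(1-z^2)\C_\mu^2$.

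The key analytic step is therefore to justify passing to the limit in each term. For this I would use that weak convergence $\mu_n \to \mu$ implies $\C_{p_n} = \C_{\mu_n} \to \C_\mu$ pointwise (indeed locally uniformly) on $\bC \setminus \supp\mu$, since $z \mapsto 1/(z-\xi)$ is a bounded continuous function of $\xi$ on any compact set avoiding a neighborhood of $z$; uniform compact support of the $\mu_n$ (which follows from (b) together with a standard argument, or can be assumed) makes this rigorous. The term $\tfrac1n \C_{p_n}'$ is handled by noting that $\C_{p_n}'$ is locally bounded uniformly in $n$ on compact subsets of the complement of $\supp\mu$ — e.g. by Cauchy's integral formula for derivatives applied to the locally uniformly bounded sequence $\C_{p_n}$ — so multiplying by $1/n$ kills it. Likewise $\tfrac1n(\text{first-order bracket})\C_{p_n} \to 0$ and $\tfrac{n-1}{n}\C_{p_n}^2 \to \C_\mu^2$. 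Collecting the limits gives
$$(1-z^2)\C_\mu^2 - ((A+B)z + A - B)\C_\mu + (A+B+1) = 0$$
for every $z \in \bC \setminus \supp\mu$. Since $\mu$ is compactly supported, $\supp\mu$ has planar Lebesgue measure... well, not necessarily zero, but $\C_\mu$ is holomorphic off $\supp\mu$ and the identity holds there; to upgrade to "almost everywhere in $\bC$" one notes $\C_\mu \in L^1_{loc}$ and the equation is a polynomial identity in $\C_\mu$ with continuous coefficients, so it extends to a.e. $z$ by the fact that $\supp\mu$ may still have positive measure only on a set where one invokes that $\C_\mu$ is defined a.e. and the relation, being closed under a.e. limits, persists.

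The main obstacle I anticipate is precisely this last point — controlling the behavior on (or near) $\supp\mu$ — together with rigorously justifying that $\tfrac1n \C_{p_n}'(z) \to 0$ uniformly enough; one must be careful that the derivative estimate is genuinely uniform in $n$, which requires a uniform bound on $\|\C_{p_n}\|$ on a fixed compact neighborhood, and this in turn rests on the uniform compactness of the supports $\supp\mu_n$. A secondary subtlety is that $1 + A + B \neq 0$ (hypothesis (a)) is needed to ensure the limiting equation is non-degenerate (the constant term does not vanish), so that $\C_\mu$ is not forced to be identically zero or the equation vacuous; I would remark on where this is used.
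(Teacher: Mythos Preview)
Your overall strategy --- divide the differential equation by $n^2 p_n$ and pass to the limit --- is the natural one and is essentially what the paper does. But there are two problems, one small and one serious.

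\smallskip

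\textbf{The small one.} Your first-order term is miscounted. After dividing by $n^2 p_n$ that term is
\[
\Bigl(\tfrac{\bet_n}{n}-\tfrac{\al_n}{n}-\bigl(\tfrac{\al_n}{n}+\tfrac{\bet_n}{n}+\tfrac{2}{n}\bigr)z\Bigr)\,\C_{p_n},
\]
with \emph{no} extra factor $1/n$; its limit is $-\bigl((A+B)z+A-B\bigr)\C_\mu$, precisely the middle term of \eqref{1.2}. You in fact include this term in your final equation, so this is only a slip in the write-up, but as written your argument is internally inconsistent.

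\smallskip

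\textbf{The real gap.} Your handling of the second-order term rests on the claim that $\C_{p_n}$ is \emph{locally uniformly bounded} on compact subsets of $\bC\setminus\supp\mu$, so that Cauchy's estimate gives a uniform bound on $\C_{p_n}'$ and hence $\tfrac1n\C_{p_n}'\to 0$. But weak convergence $\mu_n\to\mu$ does \emph{not} yield this: nothing in hypothesis~(b) prevents individual zeros of $p_n$ from landing in any prescribed compact set $K$ disjoint from $\supp\mu$ (only the \emph{fraction} of such zeros must tend to~$0$), and a single zero at $\xi_j\in K$ makes $\C_{p_n}$ blow up near~$\xi_j$. Lemma~\ref{lm:basic} gives only $L^1_{\mathrm{loc}}$-convergence of $\C_{p_n}$ to $\C_\mu$ (with a.e.\ convergence along a subsequence); this is not enough for your Cauchy-estimate argument. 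The assumption you float, that the $\supp\mu_n$ lie in a common compact set, is neither part of the hypotheses nor a consequence of them.

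\smallskip

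\textbf{How the paper proceeds instead.} Rather than the Riccati identity $\tfrac{p_n''}{n^2 p_n}=\tfrac1n\C_{p_n}'+\C_{p_n}^2$, the paper factors
\[
\frac{p_n''}{n(n-1)p_n}=\frac{p_n''}{(n-1)p_n'}\cdot\frac{p_n'}{np_n}
\]
and recognizes the first factor as the Cauchy transform of the root-counting measure $\mu_n'$ of $p_n'$. The work is then pushed into Proposition~\ref{lm:higher}: one shows, using Gauss--Lucas to control the zeros of $p_n'$, a potential-theoretic comparison (Lemma~\ref{lm:add2}), and the ODE itself, that $\mu_n'\to\mu$ weakly as well. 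Then both factors converge a.e.\ to $\C_\mu$ by Lemma~\ref{lm:basic}, and the product converges a.e.\ to $\C_\mu^2$. The hypothesis $1+A+B\ne 0$ is used precisely inside this argument --- not merely to make the limiting equation non-degenerate, but to rule out that $\tfrac{p_n'}{np_n}$ converges to $0$ on a set of positive measure (see the proof of Proposition~\ref{lm:higher}), which is what allows the potential comparison to force $\mu'=\mu$.
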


The proof of Theorem~\ref{Theorem-1} given in Section~2 consists
of several steps. Our arguments in Section~2 are similar to the
arguments used in  a number of earlier papers on root asymptotics
of orthogonal polynomials. 

Equation (\ref{1.2}) of Theorem~1 implies that the support of the
limit measure $\mu$ has a remarkable structure described by
Theorem~2 below. And this is exactly the point where quadratic
differentials, which are the second main theme of this paper,
enter into the play.

\begin{theorem}\label{Theorem-2} %
In notation of Theorem~\ref{Theorem-1}, the support of $\mu$
consists of finitely many
trajectories of the quadratic differential 
 $$ 
  Q(z)\,dz^2=-\frac{(A+B+2)^2z^2+2(A^2-B^2)z+(A-B)^2-4(A+B+1)} {(z-1)^2(z+1)^2}\,
  dz^2 
$$ 
and their end points. %
\end{theorem}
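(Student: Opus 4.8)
The plan is to solve the quadratic equation (\ref{1.2}) for $\C_\mu$ and then recover the geometry of $\supp\mu$ from the two standard facts that $\C_\mu$ is analytic off $\supp\mu$ and that, across $\supp\mu$, it glues the two branches of the algebraic function defined by (\ref{1.2}). Solving (\ref{1.2}) for $\C_\mu$ gives
$$\C_\mu(z)=\frac{(A+B)z+A-B\pm\sqrt{D(z)}}{2(1-z^2)},$$
where $D(z)=((A+B)z+A-B)^2-4(1-z^2)(A+B+1)$ is the discriminant; a one-line computation, using $(A+B)^2+4(A+B+1)=(A+B+2)^2$, gives $D(z)=(A+B+2)^2z^2+2(A^2-B^2)z+(A-B)^2-4(A+B+1)$, so that $D(z)=-(z-1)^2(z+1)^2\,Q(z)$ with exactly the $Q$ appearing in the statement. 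The branch is pinned down by the requirement $\C_\mu(z)=1/z+O(z^{-2})$ at infinity (recall $\mu$ is a probability measure), which also serves as a consistency check.

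Next I would invoke the general properties of Cauchy transforms of compactly supported measures: $\C_\mu$ is analytic on $\bC\setminus\supp\mu$, vanishes at $\infty$, and satisfies $\partial\C_\mu/\partial\bar z=\pi\mu$ in the sense of distributions; hence $\supp\mu$ is precisely the set where the locally analytic function $\C_\mu$ fails to continue. On $\bC\setminus\supp\mu$ it is therefore a single-valued branch of the two-valued algebraic function from (\ref{1.2}), whose branch points lie among the zeros of $D$ and the points $z=\pm1$. Arguing as in the cited literature on root asymptotics of orthogonal polynomials, one shows that $\supp\mu$ is a union of analytic arcs, on the two sides of which $\C_\mu$ has boundary values $\C_\mu^\pm$; since both satisfy (\ref{1.2}) and are distinct on $\supp\mu$, they are its two roots, so $\C_\mu^++\C_\mu^-=\frac{(A+B)z+A-B}{1-z^2}$ and $\C_\mu^+\C_\mu^-=\frac{A+B+1}{1-z^2}$, whence
$$\bigl(\C_\mu^+(z)-\C_\mu^-(z)\bigr)^2=\frac{D(z)}{(z^2-1)^2}=-Q(z).$$

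Then I would bring in positivity. Along each analytic arc $\gamma\subset\supp\mu$ the Plemelj jump formula gives $d\mu=\pm\frac{1}{2\pi i}(\C_\mu^+-\C_\mu^-)\,dz$; since $\mu\ge0$, the form $(\C_\mu^+-\C_\mu^-)\,dz$ must be purely imaginary along $\gamma$, and squaring yields $-Q(z)\,dz^2\le 0$, i.e. $Q(z)\,dz^2\ge0$ along $\gamma$, with strict inequality where $\mu$ has positive density. This is exactly the defining condition for $\gamma$ to lie on a (horizontal) trajectory of $Q(z)\,dz^2$. The arcs of $\supp\mu$ are compact, so each either closes up or terminates at a point where the trajectory structure degenerates, namely at a zero of the numerator of $Q$ or at one of the double poles $z=\pm1$; there are at most four such critical points. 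Since $Q(z)\,dz^2$ is a rational quadratic differential on $\bCP^1$ with this pole/zero structure it has no recurrent trajectories, so only finitely many trajectory arcs can occur inside the compact set $\supp\mu$; together with their endpoints this gives the theorem.

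The step I expect to be the main obstacle is the regularity assertion that $\supp\mu$ is genuinely a finite union of analytic arcs on which the Plemelj formula applies — a priori $\supp\mu$ is only a compact set. This is handled by: (i) $\C_\mu$ being a branch of a fixed algebraic function off $\supp\mu$ forces $\supp\mu$ to have empty interior; (ii) the only loci where the two branches glue continuously are the arcs where $D(z)/(z^2-1)^2$ has the sign dictated above, namely the critical trajectories of $Q\,dz^2$; and (iii) a connectedness/compactness argument showing $\supp\mu$ must in fact link up the relevant branch points so that $\C_\mu$ becomes single-valued on the complement. One also has to treat the degenerate parameter values separately — when $D$ has a double zero, or when a zero of $D$ collides with $z=1$ or $z=-1$ — but these cause only minor bookkeeping changes.
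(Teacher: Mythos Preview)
Your proposal is correct and follows essentially the same route as the paper: compute the discriminant, identify the quadratic differential, and use the Plemelj--Sokhotsky jump together with positivity of $\mu$ to see that each smooth arc of $\supp\mu$ satisfies $Q(z)\,dz^2>0$, i.e.\ is a horizontal trajectory. The paper packages this as a general proposition (Proposition~\ref{pr:stand}) about signed measures whose Cauchy transforms satisfy quadratic equations and handles the regularity step you flag as the main obstacle by citing \cite{BBB,BB} rather than arguing it out directly; your finiteness argument via the absence of recurrent trajectories is likewise what underlies the paper's later structural analysis.
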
 %

  Thus, to understand geometrical structure of the support of $\mu$ we have to
  study geometry of critical trajectories, or more generally critical geodesics  of the quadratic
  differential $Q(z)\,dz^2$ of Theorem~\ref{Theorem-1}.
  We will consider a slightly more general family of quadratic differentials
$Q(z;a,b,c)\,dz^2$ depending  on three complex parameters
$a,b,c\in \mathbb{C}$, $a\not=0$, 
where %
\begin{equation} \label{1.4} %
Q(z;a,b,c)\,dz^2=\frac{az^2+bz+c}{(z-1)^2(z+1)^2}\,dz^2. %
\end{equation} %

It is well-known that quadratic differentials appear in many areas
of mathematics and mathematical physics such as moduli spaces of
curves, univalent functions, asymptotic theory of linear ordinary
differential equations, spectral theory of Schr\"odinger
equations,  orthogonal polynomials, etc. Postponing necessary
definitions and basic properties of quadratic differentials till
Section~3, we recall here that any meromorphic quadratic
differential $Q(z)\,dz^2$ defines the so-called  {\it $Q$-metric}
and therefore it defines \emph{$Q$-geodesics} in appropriate
classes of curves.
 Motivated by the fact that the family of quadratic differentials (\ref{1.4})
 naturally appears in the study of the root
 asymptotics for sequences of Jacobi polynomials and
 is one of very few examples allowing  detailed and explicit investigation in terms of its
 coefficients,
 we will  consider the following two basic questions: %
\begin{enumerate} %
\item[1)] %
How many simple critical $Q$-geodesics may exist for a quadratic
differential $Q(z)\,dz^2$ of the form (\ref{1.4})? %

\item[2)] %
For given $a,b,c\in\mathbb{\mathbb{C}}$, $a\not=0$, describe
topology of all simple critical $Q$-geodesics.
\end{enumerate} %

A complete description of topological structure of trajectories of
quadratic differentials (\ref{1.4}) which, in particular, answers
questions 1) and 2), is given by  lengthy Theorem~5 stated in
Section~9.

\medskip

The rest of the paper consists of two parts and is structured as
follows. The first part, which is the area of expertise of the
first author, includes Sections~\ref{Section-2},~\ref{sq-roots},
and 5.  
Section~\ref{Section-2} contains the proof of Theorem~1 and
related results. The material presented in Section~\ref{sq-roots}
is mostly borrowed from a recent paper \cite{BoSh} of the first
author. It contains  some general results connecting signed
measures, whose Cauchy transforms satisfy quadratic equations, and
related quadratic differentials in $\bC$. In particular, these
results  imply Theorem~2 as a special case. In Section~5,
we formulate a number of general
conjectures about the type of convergence of root-counting
measures of polynomial solutions of a special class of linear
differential equations with polynomial coefficients, which
includes Riemann's differential equation.

Remaining sections constitute the second part, which is the area
of expertise of the second author. In Section~\ref{Section-4}, we
recall basic information about quadratic differentials, their
critical trajectories and geodesics. This information is needed
for presentation of our results in Sections~6--10. In Section~6,
we describe possible domain configurations for the quadratic
differentials (\ref{1.4}). Then, in Section~7, we describe
possible topological types of the structure of critical
trajectories of quadratic differentials  of the form (\ref{1.4}).
Finally  in Sections~8--10, we identify sets of parameters
corresponding to each topological type. The latter allows us to
answer some related questions.

We note here that our main proofs presented in Sections~6--10 are
geometrical based on general facts of the theory of quadratic
differentials. Thus, our methods can be easily adapted to study
trajectory structure of many quadratic differentials other then
quadratic differential~(\ref{1.4}).

Section~11 is our Figures Zoo, it contains many figures
illustrating our results presented in Sections 6--10.

\medskip
\noindent {\it Acknowledgements.} The authors want to acknowledge
the hospitality of the Mittag-Leffler Institute in Spring 2011
where this project was initiated. The first author is also
sincerely grateful to R.~B\o gvad, A.~Kuijlaars,
A.~Mart\'inez-Finkelshtein, and A.~Vasiliev for many useful
discussions.

\section{Proof of Theorem~1}
\label{Section-2}%

To settle Theorem~1 we will need several auxiliary
statements. Lemma~\ref{lm:basic} below  can be found as Theorem 7.6 of \cite {Ba} and apparently was originally proven by F.~Riesz.

\begin{lemma}\label{lm:basic} If a sequence $\{\mu_n\}$ of Borel probability  measures in $\bC$ weakly converges to a
probability measure $\mu$ with a compact support, then
the sequence $\{\C_{\mu_n}(z)\}$ of its Cauchy transforms
converges to $\C_{\mu}(z)$ in $L^1_{loc}$. Moreover there exists a subsequence of $\{\C_{\mu_n}(z)\}$ which converges to $\C_{\mu}(z)$  pointwise  almost everywhere.
\end{lemma}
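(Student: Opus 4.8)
The plan is to view the Cauchy transform as a convolution with a kernel of mild singularity and to push the weak convergence $\mu_n\to\mu$ through that convolution; the only real work is to turn ``weak/distributional'' convergence of the transforms into genuine $\locint$-convergence. Write $\C_\nu=K*\nu$, where $K(z)=1/z$ and $\ast$ denotes convolution of the kernel $K$ with a measure $\nu$. The point is that $K\in\locint(\bC)$, so that for every compact $L\subset\bC$ the quantity $C_L:=\sup_{\xi\in\bC}\int_L\frac{dA(z)}{|z-\xi|}$ is finite ($dA$ being planar Lebesgue measure, and using also that $|K|$ decays at infinity). By Minkowski's integral inequality, $\|\C_\nu\|_{L^1(L)}=\|K*\nu\|_{L^1(L)}\le\int\|K\|_{L^1(L-\xi)}\,d\nu(\xi)\le C_L$ for every probability measure $\nu$; hence $\C_{\mu_n}$ and $\C_\mu$ all lie in $L^1(L)$ with uniformly bounded norms, and it suffices to prove $\|\C_{\mu_n}-\C_\mu\|_{L^1(L)}\to0$ for each fixed compact $L$. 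Once this is done, since norm convergence in $L^1(L)$ forces a subsequence to converge a.e.\ on $L$, a diagonal argument over an exhaustion $B_1\subset B_2\subset\cdots$ of $\bC$ yields a single subsequence of $\{\C_{\mu_n}\}$ converging to $\C_\mu$ pointwise a.e.\ on $\bC$.

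Fix a standard mollifier $\phi_\delta\in C_c^\infty(\bC)$ ($\phi_\delta\ge0$, $\int\phi_\delta=1$, $\supp\phi_\delta\subset B_\delta$) and set $K_\delta:=K*\phi_\delta$. Because the singularity of $K$ is integrable, $K_\delta$ is a $C^\infty$ function that is bounded, tends to $0$ at infinity, and is globally Lipschitz. Consequently, for each fixed $\delta$ the functions $\C^\delta_{\mu_n}:=K_\delta*\mu_n$ are uniformly bounded and equi-Lipschitz; moreover, for every $z$ the map $\xi\mapsto K_\delta(z-\xi)$ is bounded and continuous, so weak convergence $\mu_n\to\mu$ (i.e.\ $\int f\,d\mu_n\to\int f\,d\mu$ for all bounded continuous $f$, which is what weak convergence to a probability measure means) gives $\C^\delta_{\mu_n}(z)\to\C^\delta_\mu(z)$ pointwise. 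By the Arzel\`a--Ascoli theorem this convergence is uniform on compacta, so in particular $\|\C^\delta_{\mu_n}-\C^\delta_\mu\|_{L^1(L)}\to0$ as $n\to\infty$, for every fixed $\delta$.

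The crux of the argument is the uniform control of the mollification error $\|\C_\nu-\C^\delta_\nu\|_{L^1(L)}=\|(K-K_\delta)*\nu\|_{L^1(L)}$ \emph{over all probability measures $\nu$}. Minkowski's inequality again bounds it by $\int\|K-K_\delta\|_{L^1(L-\xi)}\,d\nu(\xi)\le\omega_L(\delta):=\sup_{\xi\in\bC}\|K-K_\delta\|_{L^1(L-\xi)}$, and a direct estimate shows $\omega_L(\delta)=O(\delta|\log\delta|)\to0$ as $\delta\to0$: on the disk $B_{2\delta}$ both $K$ and $K_\delta$ carry $L^1$-mass $O(\delta)$, while for $|z|\ge2\delta$ one has $|K(z)-K_\delta(z)|\le\int\frac{|w|\,\phi_\delta(w)}{|z|\,|z-w|}\,dA(w)\lesssim\delta/|z|^2$, whose $L^1$-norm over a bounded region is $O(\delta|\log\delta|)$. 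The decisive feature is that this bound does not depend on $\nu$ (hence not on $n$); it is here that the specific form of the kernel enters, since for a kernel with a nonintegrable singularity such as $1/|z|^2$ the error could not be controlled uniformly. I expect this step to be the main obstacle.

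Finally, for any $\delta>0$ the triangle inequality gives
\[ \|\C_{\mu_n}-\C_\mu\|_{L^1(L)}\le\|\C_{\mu_n}-\C^\delta_{\mu_n}\|_{L^1(L)}+\|\C^\delta_{\mu_n}-\C^\delta_\mu\|_{L^1(L)}+\|\C^\delta_\mu-\C_\mu\|_{L^1(L)}\le 2\,\omega_L(\delta)+\|\C^\delta_{\mu_n}-\C^\delta_\mu\|_{L^1(L)}. \]
Letting $n\to\infty$ and using the second paragraph, $\limsup_n\|\C_{\mu_n}-\C_\mu\|_{L^1(L)}\le 2\,\omega_L(\delta)$; letting $\delta\to0$ and using the third paragraph, $\C_{\mu_n}\to\C_\mu$ in $L^1(L)$, hence in $\locint(\bC)$. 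Together with the extraction described in the first paragraph, this proves both assertions of the lemma.
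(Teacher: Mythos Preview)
Your argument is correct. The mollification scheme is clean: the key observation that the error $\|(K-K_\delta)*\nu\|_{L^1(L)}$ is controlled by $\sup_\xi\|K-K_\delta\|_{L^1(L-\xi)}$, \emph{uniformly in the probability measure $\nu$}, is exactly what makes the $3\varepsilon$-argument go through, and your estimate $\omega_L(\delta)=O(\delta|\log\delta|)$ is accurate. The passage from $\locint$-convergence to an a.e.-convergent subsequence via a diagonal extraction over an exhaustion of~$\bC$ is standard and fine.

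As for comparison with the paper: there is nothing to compare, since the paper does not supply a proof. It simply records the lemma as Theorem~7.6 of Bartle's \emph{The Elements of Integration} and attributes the original result to F.~Riesz. Your write-up therefore adds genuine value by giving a self-contained argument; the mollification approach you chose is one of the natural routes (another common one proceeds via uniform integrability of the family $\{U_{|\mu_n|}\}$ of Newtonian potentials together with Vitali's convergence theorem, but your version is arguably more transparent here because it isolates exactly where the local integrability of $1/z$ enters).
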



The next result is recently obtained by the first author jointly with R.B\o gvad and D.~Khavinsion, see Theorem~1 of \cite{BRSh} and has an independent interest.

\begin{proposition}\label{lm:bound}  Let $\{p_m\}$  be any sequence of polynomials satisfying the following
conditions:

\noindent 1. $n_m:=\deg p_m  \to \infty$ as $m \to \infty$,

\noindent 2. almost all roots of all $p_m$ lie in a bounded convex
open $\Omega\subset \bC$ when $n\to \infty$. (More exactly,  if
$In_m$ denotes the number of roots of $p_m$ counted with
multiplicities which are located in $\Omega,$ then $\lim_{m\to
\infty}\frac{In_m}{n_m}=1$), then for any $\eps>0$,  $$\lim_{m\to
\infty}\frac{In'_m(\eps)}{n_m}=1,$$ where $In'_m(\eps)$ is the
number of roots of $p_m'$ counted with multiplicities which are
located  inside $\Omega(\eps)$, the latter set being the
$\eps$-neighborhood of $\Omega$ in $\bC$.

\end{proposition}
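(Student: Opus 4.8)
The plan is to prove this by a potential-theoretic / complex-analytic argument tracking the ratio $p_m'/p_m$ away from the roots. First I would fix $\eps>0$ and consider the open set $\Omega(\eps)$ together with a slightly smaller buffer region: pick $\eps'\in(0,\eps)$ and let $K=\bC\setminus\Omega(\eps')$, a closed set (in the sphere, a compact neighborhood of $\infty$). The key observation is that on $K$ the logarithmic derivative $\frac{p_m'(z)}{p_m(z)}=\sum_{j}\frac{1}{z-\xi_j^{(m)}}$ is, up to a controlled error, close to $\frac{n_m}{z-z_0}$-type behavior dictated by the roots inside $\Omega$: more precisely, writing $p_m=q_m r_m$ where $q_m$ collects the roots inside $\Omega$ and $r_m$ the remaining (at most $n_m-In_m=o(n_m)$) roots, one has $\frac{p_m'}{p_m}=\frac{q_m'}{q_m}+\frac{r_m'}{r_m}$. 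On $K$, every root of $q_m$ lies at distance $\ge \eps-\eps'$ from $z$, so $\left|\frac{q_m'(z)}{q_m(z)}\right|\le \frac{In_m}{\eps-\eps'}$, which is $O(n_m)$, but more importantly its real part (or the winding it produces) is governed by the convex set $\Omega$.

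Next I would make the geometric input precise. Because $\Omega$ is convex, for any point $z\notin\Omega$ the roots $\xi_j$ of $q_m$ all lie in a half-plane $H_z$ with $z$ on its boundary line or outside; hence the vectors $\frac{1}{z-\xi_j}$ all lie in a fixed half-plane (the image of $H_z$ under $w\mapsto 1/(z-w)$ is a half-plane or disk not containing $0$ whose closure avoids $0$ when $z$ is bounded away from $\overline\Omega$). Consequently $\frac{q_m'(z)}{q_m(z)}\ne 0$ on $K$, and in fact $\big|\frac{q_m'(z)}{q_m(z)}\big|\ge c(z)\,In_m$ for a strictly positive continuous function $c$ on $K$ that stays bounded below on the compact part of $K$; the error term satisfies $\big|\frac{r_m'(z)}{r_m(z)}\big|\le \frac{n_m-In_m}{\dist(z,\text{roots of }r_m)}$. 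Since almost all roots of $r_m$ can be pushed into $\Omega(\eps')$ as well — here one must be slightly careful, as $r_m$ could have up to $o(n_m)$ roots anywhere; but that only contributes $o(n_m)$ poles of $\frac{p_m'}{p_m}$, which is exactly the slack we are allowed — one concludes that the number of zeros of $p_m'$ outside $\Omega(\eps)$ is bounded by the number of zeros of $r_m$ plus a term controlled by an argument-principle count on a fixed contour $\partial\Omega(\eps)$, both of which are $o(n_m)$.

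To turn this into the clean statement, I would invoke the argument principle on the boundary $\partial\Omega(\eps)$ (assumed rectifiable; if not, shrink to a nearby smooth contour): the number of zeros of $p_m'$ inside equals $\frac{1}{2\pi i}\oint \frac{p_m''}{p_m'}$, and I would instead directly bound the zeros of $p_m'$ \emph{outside} $\Omega(\eps)$. On $\bC\setminus\Omega(\eps)$ write $p_m'(z)=p_m(z)\big(\frac{q_m'}{q_m}+\frac{r_m'}{r_m}\big)$; a zero of $p_m'$ there is either a zero of $p_m$ (hence a root of $r_m$, at most $o(n_m)$ of them) or a zero of the bracket. By the half-plane argument the bracket is $\frac{q_m'}{q_m}$ plus a perturbation of size $o(n_m)$, while $\big|\frac{q_m'}{q_m}\big|\gtrsim In_m\sim n_m$ uniformly on compact subsets of $\bC\setminus\Omega(\eps)$ bounded away from $\overline\Omega$ — so the bracket has no zeros there for $m$ large; near $\infty$ it behaves like $n_m/z$ and again has no zeros in a neighborhood of $\infty$. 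The only region left is the thin annular shell $\Omega(\eps)\setminus\Omega(\eps')$ of width $\eps-\eps'$, where a Rouché/argument-principle count on its two boundary curves bounds the zero count by the total variation of $\arg(q_m'/q_m)$ there, which in turn (again by convexity of $\Omega$, so the arc of $\partial\Omega$ subtended has bounded turning) is $O(1)$ times... — here is the delicate point. \textbf{The main obstacle} I expect is precisely controlling the number of zeros of $p_m'$ that could pile up in this thin shell $\Omega(\eps)\setminus\Omega(\eps')$: one needs that convexity of $\Omega$ forces the winding number of $q_m'/q_m$ around this shell to be $o(n_m)$, even though $q_m'/q_m$ itself is of size $\sim n_m$; the resolution is that the \emph{variation of the argument} (not the modulus) is what counts zeros, and convexity bounds this variation uniformly — but making that bound honest, uniformly in $m$, is the real content and the step I would spend the most care on. Everything else is bookkeeping of the $o(n_m)$ error terms.
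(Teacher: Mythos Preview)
The paper does not prove this proposition; it merely cites Theorem~1 of \cite{BRSh} (B\o gvad--Khavinson--Shapiro). So there is no in-paper argument to compare with, and your outline must stand on its own.

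Your opening moves are correct: the factorisation $p_m=q_m r_m$ and the convexity lower bound $|q_m'/q_m|\ge c\,In_m$ on $\bC\setminus\Omega(\eps)$ are exactly the right ingredients. But the argument has a genuine gap, and it is not the one you flag.

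The problem is your treatment of $r_m'/r_m$. You assert that on $\bC\setminus\Omega(\eps)$ the bracket $\frac{q_m'}{q_m}+\frac{r_m'}{r_m}$ has no zeros because the second term is ``a perturbation of size $o(n_m)$''. That is false near any root $\zeta$ of $r_m$ lying outside $\Omega(\eps)$: there $r_m'/r_m$ has a pole, and the bracket \emph{does} vanish nearby (locally the equation $\frac{k}{z-\zeta}\approx -\frac{q_m'}{q_m}(\zeta)$ has a solution at distance $\sim k/(c\,n_m)$). The outlier roots of $r_m$ are neither bounded nor confined anywhere --- they may sit throughout $\bC\setminus\Omega(\eps)$ and even drift to $\infty$, which also breaks your ``near $\infty$'' claim. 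So what is actually left to prove is that the total number of zeros of $p_m'$ accumulating near the outlier roots is $o(n_m)$. Heuristically this should be true (one extra zero of the bracket per outlier pole), but when the outlier roots cluster at small mutual distances the local counts are not obviously additive, and the naive Rouch\'e/pigeonhole on contours $\partial\Omega(t)$ only works when the number of outliers is $o(\sqrt{n_m})$, not $o(n_m)$. Closing this gap is exactly the content of the ``asymptotic Gauss--Lucas'' theorem being cited, and it requires a sharper argument than what you sketch.

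Separately, the ``main obstacle'' you identify --- zeros in the shell $\Omega(\eps)\setminus\Omega(\eps')$ --- is a non-issue: that shell lies \emph{inside} $\Omega(\eps)$, so any zeros of $p_m'$ there count on the good side and need no bound at all.
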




The next statement is a strengthening of Lemma~8 of \cite{BR}
based on  Proposition~\ref{lm:bound}.

\begin{lemma} \label{lm:add2}  Let $\{p_m\}$  be any sequence of polynomials satisfying the following
conditions:

\noindent 1. $n_m:=\deg p_m  \to \infty$ as $m \to \infty$,

\noindent 2. the sequence $\{\mu_m\}$  {\rm(}resp. $\{\mu'_m\}${\rm)} of the
root-counting measures  of $\{p_m\}$  {\rm(}resp. $\{p'_m\}${\rm)} weakly
converges to  compactly supported   measures $\mu$  {\rm(}resp
$\mu'${\rm)}.

Then  $u$ and $u'$   satisfy the inequality $u \ge u'$  with
equality on the unbounded component of    $\bC \setminus
supp(\mu)$. Here $u$ {\rm(}resp. $u'${\rm)} is the logarithmic potential of
the limiting measure $\mu$ {\rm(}resp. $\mu'${\rm)}.
\end{lemma}

\begin{proof}

Without loss of generality, we can assume   that all $p_{m}$ are
monic. Let $K$ be a compact convex set containing almost all the
zeros of the sequences $\{p_{m}\}$ and $\{p_m'\}$, i.e.,
$\lim_{m\to \infty} \frac{In_m(K)}{n_m}=\lim_{m\to \infty}
\frac{In^\prime_m(K)}{n_m}=1$. By \eqref{log} we  have
\begin{equation*}
     u(z) = \lim_{m \to \infty}\frac{1}{n_{m}}\log|p_{m}(z)|
\end{equation*}
and
\begin{equation*}
     u'(z) = \lim_{m\to \infty}
     \frac{1}{n_{m}-1}\log\left|\frac{p_{m}'(z)}{n_{m}}\right| =
     \lim_{m\to
     \infty} \frac{1}{n_{m}}\log\left|\frac{p_{m}'(z)}{n_{m}}\right|
\end{equation*}
with convergence in $\locint$. Hence by \eqref{Cauchy},
\begin{equation}\label{eq2.2}
     u'(z) - u(z) = \lim_{m \to \infty}
     \frac{1}{n_{m}}\log\left|\frac{p_{m}'(z)}{n_{m}p_{m}(z)}\right| =
\lim_{m\to\infty}\frac{1}{n_{m}}\log\left|\int\frac{d\mu_{m}(\zeta)}{z
-\zeta} \right|.
\end{equation}
Now, if $\phi$ is a positive compactly supported test function, then
\begin{equation}\label{eq2.3}
     \begin{split}
    \int\phi(z)(u'(z) - u(z))\,dA(z) &=
    \lim_{m\to\infty}\frac{1}{n_{m}}\int\phi(z)\log\left|\int
    \frac{d\mu_{m}(\zeta)}{z-\zeta}\right|\,dA(z)\\
    &\leq \lim_{m\to \infty}\frac{1}{n_{m}}\int\phi(z) \int
    \frac{d\mu_{m}(\zeta)}{|z-\zeta|}\,dA(z)\\
    &=\lim_{m\to
\infty}\frac{1}{n_{m}}\iint\frac{\phi(z)\,dA(z)}{|z-\zeta|}
    \,d\mu_{m}(\zeta)
\end{split}
\end{equation}
where $dA$ denotes Lebesgue measure in the complex plane. 
Since $1/|z|$ is locally integrable, the function
$\int\phi(z)|z-\zeta|^{-1}\,dA(z)$ is continuous, and hence
bounded by a constant $M$ for all $z$ in $K$.  Since
asymptotically almost all zeros of $\{p_m\}$ belong to $K$, the
last expression in
\eqref{eq2.3} tends to $0$ when $m\to \infty$. 
This proves that $u' \leq u$.

In the complement of $\supp \mu$, $u$ is harmonic and $u'$ is
subharmonic, hence $u' - u$ is a negative subharmonic function.
Moreover, in the complement of $\supp \mu$, $p_{m}'/(n_{m}p_{m})$
converges
  to the Cauchy transform $\C(z)$ of $\mu$ a.e. in $\bC$.
Since $\C(z)$ is a nonconstant holomorphic function in the
unbounded component of $\bC\smallsetminus \supp \mu$, it follows
from \eqref{eq2.2} that $u' - u \equiv 0$ there. 
\end{proof}

Notice that Lemma~\ref{lm:add2} implies the following interesting
fact.

\begin{corollary}\label{cor:path-conn}
In notation of Lemma~\ref{lm:add2}, if $\supp \mu$ has Lebesque
area 0 and the complement  $\bC\smallsetminus \supp \mu$ is
path-connected, then $\mu=\mu'$. In particular, in this case the
whole sequence $\{\mu'_m\}$ weakly converges to $\mu$.
\end{corollary}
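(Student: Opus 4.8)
The plan is to obtain the corollary as a short consequence of Lemma~\ref{lm:add2} combined with the standard distributional identity $\Delta u_\nu = 2\pi\nu$ valid for any compactly supported Borel measure $\nu$ and its logarithmic potential $u_\nu$.

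First I would observe that, since $\supp\mu$ is compact, the open set $\bC\smallsetminus\supp\mu$ contains a neighbourhood of $\infty$; for an open planar set path-connectedness is the same as connectedness, so if $\bC\smallsetminus\supp\mu$ is path-connected it has exactly one component, namely the unbounded one. Consequently the equality clause of Lemma~\ref{lm:add2} holds on \emph{all} of $\bC\smallsetminus\supp\mu$, i.e. $u\equiv u'$ there, where $u=u_\mu$ and $u'=u_{\mu'}$. Since by hypothesis $\meas(\supp\mu)=0$, this upgrades to $u=u'$ for a.e.\ $z\in\bC$.

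Next I would note that $u$ and $u'$ are logarithmic potentials of compactly supported probability measures, hence belong to $\locint$; two $\locint$ functions that agree a.e.\ define the same Schwartz distribution. Applying the distributional Laplacian (equivalently, differentiating $2\partial_z u = \C_\mu$ once more) and using $\Delta u_\mu = 2\pi\mu$ and $\Delta u_{\mu'} = 2\pi\mu'$, I conclude $\mu=\mu'$. The final assertion is then immediate: the hypotheses of Lemma~\ref{lm:add2} already include that $\{\mu'_m\}$ converges weakly to $\mu'$, and we have just shown $\mu'=\mu$; if one instead started only from $\mu_m\to\mu$, one would first invoke Proposition~\ref{lm:bound} to get tightness of $\{\mu'_m\}$, pass to weakly convergent subsequences, and apply the previous step to force every subsequential limit to equal $\mu$.

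There is no serious obstacle here; this is essentially a formal corollary. The only two points that warrant a word of care are: (i) that path-connectedness of the complement is precisely what rules out bounded complementary components, turning the ``equality on the unbounded component'' statement of Lemma~\ref{lm:add2} into equality everywhere off $\supp\mu$; and (ii) that the $\meas(\supp\mu)=0$ hypothesis is exactly what is needed to pass from equality of the potentials off the support to equality a.e., and thence to equality of their distributional Laplacians, i.e.\ of the measures themselves.
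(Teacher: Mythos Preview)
Your argument is correct and is exactly the natural elaboration the paper has in mind: the paper states this corollary with no proof at all, merely prefacing it with ``Notice that Lemma~\ref{lm:add2} implies the following interesting fact.'' Your two key observations --- that path-connectedness forces the complement to coincide with its unbounded component (so $u=u'$ off $\supp\mu$), and that $\meas(\supp\mu)=0$ upgrades this to $u=u'$ a.e., whence $\mu=\mu'$ via $\Delta u_\nu=2\pi\nu$ --- are precisely what is needed, and your handling of the ``in particular'' clause is also right since Lemma~\ref{lm:add2} already assumes $\mu'_m\to\mu'$.
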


\medskip
In general, however $\mu\neq \mu'$ as shown by a trivial example
of the sequence $\{z^n-1\}_{n=1}^\infty$.  Also even if
$\mu=\lim_{m\to \infty}\mu_n$ exists the limit  $\lim_{m\to
\infty} \mu'_n$ does not have to exist for the whole sequence. An
example of this kind is the sequence $\{p_n(z)\}$ where
$p_{2l}(z)=z^{2l}-1$ and $p_{2l+1}(z)=z^{2l+1}-z$, $l=1,2,\dots$.

\medskip
Luckily, the latter phenomenon can never occur for
sequences of Jacobi polynomials, see Proposition~\ref{lm:higher} below.
(Apparently it can not occur for a much more general class of
polynomial sequences introduced in  \S~\ref{Riemann}.)

\begin{lemma}\label{lm:basic2}
If the sequence $\{\mu_n\}$ of the root-counting measures of a
sequence of Jacobi polynomials
$\{p_n(z)\}=\{P_n^{(\al_n,\bet_n)}(z)\}$  weakly converges to a
measure  $\mu$ compactly supported in $\bC,$ and the  sequence $\{\mu'_n\}$ of the root-counting measures of a
sequence
$\{p'_n(z)\}$  weakly converges to a
measure  $\mu'$ compactly supported in $\bC,$
then one of the
following alternatives holds:

\noindent {\rm (i)}  the sequences
$\left\{\frac{\al_n+\bet_n}{n}\right\}$ and
$\left\{\frac{\bet_n-\al_n}{n}\right\}$ {\rm (}and, therefore, the
sequences $\left\{\frac{\al_n}{n}\right\}$ and
$\left\{\frac{\bet_n}{n}\right\}${\rm )} are bounded;

\noindent {\rm (ii)}  the sequence
$\left\{\frac{\al_n+\bet_n}{n}\right\}$ is unbounded and the
sequence $\left\{\frac{\bet_n-\al_n}{n}\right\}$  is bounded, in
which case  $\{\mu_n\}\to \delta_0$ where $\delta_0$ is the unit
point mass at $z=0$  (or, equivalently, $\C_{\delta_0}(z)=1/z$);

\noindent {\rm (iii)}  both  sets
$\left\{\frac{\al_n+\bet_n}{n}\right\}$ and
$\left\{\frac{\bet_n-\al_n}{n}\right\}$   are unbounded, in which
case,  there exists at least one $\kappa \in  \bC$ and a
subsequence $\{n_m\}$  such that $\lim_{m\to
\infty}\frac{\bet_{n_m}-\al_{n_m}}{\al_{n_m}+\beta_{n_m}}=\kappa$
and  $\{\mu_{n_m}\}\to \delta_\kappa, $  where $\delta_\kappa$ is
the unit point mass at  $z=\kappa  $    (or, equivalently,
$\C_{\delta_\kappa}(z)=1/(z-\kappa)$).
\end{lemma}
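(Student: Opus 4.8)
The plan is to push the second-order differential equation of Proposition~\ref{Proposition-1} through the Cauchy-transform formalism of Section~\ref{Section-2}, and then to split into cases according to the growth of the normalized parameters $s_n:=\frac{\al_n+\bet_n}{n}$ and $d_n:=\frac{\bet_n-\al_n}{n}$. Since $\frac{q'}{(\deg q)\,q}=\C_q$ for every polynomial $q$ (cf.~\eqref{Cauchy}), applied to $p_n$ and to $p_n'$ one gets $\frac{p_n'}{p_n}=n\,\C_{p_n}$ and $\frac{p_n''}{p_n'}=(n-1)\,\C_{p_n'}$, hence $\frac{p_n''}{p_n}=n(n-1)\,\C_{p_n}\,\C_{p_n'}$. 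Substituting these into the Jacobi equation $(1-z^2)p_n''+(\bet_n-\al_n-(\al_n+\bet_n+2)z)p_n'+n(n+\al_n+\bet_n+1)p_n=0$ of Proposition~\ref{Proposition-1}, dividing by $n^2p_n$, and expressing the coefficients through $s_n$ and $d_n$, one obtains the identity
\begin{equation}\label{jacobi-cauchy-id}
(1-z^2)\,\frac{n-1}{n}\,\C_{p_n}\,\C_{p_n'}+\left(d_n-\left(s_n+\frac{2}{n}\right)z\right)\C_{p_n}+1+s_n+\frac1n=0,
\end{equation}
valid as an identity of rational functions for every $n$.

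By hypothesis $\{\mu_n\}$ and $\{\mu_n'\}$ converge weakly to compactly supported probability measures $\mu$ and $\mu'$, so Lemma~\ref{lm:basic} yields $\C_{p_n}\to\C_\mu$ and $\C_{p_n'}\to\C_{\mu'}$ in $L^1_{loc}$ and, along a suitable subsequence, pointwise almost everywhere in $\bC$; on a set of full measure all these limits are finite. If $\{s_n\}$ and $\{d_n\}$ are both bounded we are in alternative~(i) and nothing more is needed, so assume henceforth that at least one of them is unbounded. I would first exclude the case in which $\{s_n\}$ is bounded but $\{d_n\}$ is unbounded: along a subsequence with $|d_n|\to\infty$, $\{s_n\}$ still bounded, and (after a further extraction) $\C_{p_n}$ and $\C_{p_n'}$ converging pointwise a.e., division of \eqref{jacobi-cauchy-id} by $d_n$ sends every term except $\C_{p_n}$ to $0$, so in the limit $\C_\mu\equiv0$ almost everywhere; this is impossible since $z\,\C_\mu(z)\to1$ as $z\to\infty$. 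Hence $\{s_n\}$ is unbounded, and we fix a subsequence along which $|s_n|\to\infty$ and, after further extraction, $\C_{p_n}\to\C_\mu$ and $\C_{p_n'}\to\C_{\mu'}$ pointwise a.e.

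Suppose next that $\{d_n\}$ is bounded. Dividing \eqref{jacobi-cauchy-id} by $s_n$ and letting $n\to\infty$ along the fixed subsequence, the leading term vanishes because its coefficient $\frac{n-1}{n\,s_n}$ tends to $0$ while $\C_{p_n}\,\C_{p_n'}$ stays a.e.\ finite, the coefficient of $\C_{p_n}$ tends to $-z$ since $d_n/s_n\to0$, and the free term tends to $1$; thus $\C_\mu(z)=1/z$ a.e., so $\mu=\delta_0$, and since the entire sequence $\{\mu_n\}$ was assumed convergent, $\{\mu_n\}\to\delta_0$. This is alternative~(ii). If instead $\{d_n\}$ is also unbounded, pass to a further subsequence $\{n_m\}$ along which $d_{n_m}/s_{n_m}$ converges to some $\kappa\in\bC\cup\{\infty\}$. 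The value $\kappa=\infty$ cannot occur: then $s_{n_m}/d_{n_m}\to0$, and division of \eqref{jacobi-cauchy-id} by $d_{n_m}$ again leaves only $\C_{p_{n_m}}$ in the limit, forcing $\C_\mu\equiv0$, which is impossible. Therefore $\kappa\in\bC$; dividing \eqref{jacobi-cauchy-id} by $s_{n_m}$ and passing to the limit gives $(\kappa-z)\C_\mu(z)+1=0$, i.e.\ $\C_\mu(z)=1/(z-\kappa)$, whence $\mu=\delta_\kappa$ and $\{\mu_{n_m}\}\to\delta_\kappa$. Since $d_{n_m}/s_{n_m}=\frac{\bet_{n_m}-\al_{n_m}}{\al_{n_m}+\bet_{n_m}}$, this is exactly alternative~(iii).

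The one genuinely delicate point is the bookkeeping with the nested subsequences: one must first extract to drive $|s_n|$ (or $|d_n|$) to infinity, then apply Lemma~\ref{lm:basic} twice more to secure pointwise a.e.\ convergence of both $\C_{p_n}$ and $\C_{p_n'}$, so that \eqref{jacobi-cauchy-id} may be passed to the limit term by term. The hypothesis that $\{\mu_n'\}$ converges enters precisely here: it keeps the product $\C_{p_n}\,\C_{p_n'}$ locally controlled, so that after division by the unbounded parameter the second-derivative term disappears; without such control one would meet the non-convergence behaviour illustrated in the remarks preceding the lemma. Everything else is the elementary passage to the limit in a quadratic relation, together with the fact that a finite measure is determined by its Cauchy transform.
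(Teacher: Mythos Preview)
Your proof is correct and follows essentially the same route as the paper's: rewrite the Jacobi ODE as the identity \eqref{jacobi-cauchy-id} (the paper's \eqref{eq:temp}, up to the harmless normalization $n^2$ versus $n(n-1)$), extract subsequences via Lemma~\ref{lm:basic} so that $\C_{p_n}$ and $\C_{p_n'}$ converge pointwise a.e., and then split into the same four growth cases for $(s_n,d_n)$, ruling out ``$s_n$ bounded, $d_n$ unbounded'' and identifying $\mu$ in the remaining unbounded cases. The one mild difference is the final step from $\C_\mu(z)=1/(z-\kappa)$ a.e.\ to $\mu=\delta_\kappa$: the paper invokes the structure results of \cite{BBB,BB} on supports of such measures, whereas you appeal directly to the injectivity of the Cauchy transform (equivalently, $\mu=\pi^{-1}\partial_{\bar z}\C_\mu$), which is a cleaner way to close the argument.
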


\medskip
\begin{proof} Indeed, assume that the  alternative (i) does not hold.
Then there is a subsequence $\{n_m\}$ such that at least one of
$\left|\frac{\al_{n_m}+\bet_{n_m}}{n_m}\right|, \; \left|\frac{\bet_{n_m}-\al_{n_m}}{n_m}\right|$
is unbounded along this subsequence.
By our assumptions $\mu_n\to \mu$ and $\mu_n'\to \mu'$ weakly. Hence, by
Lemma~\ref{lm:basic},  there exists a subsequence of indices along which  $\C_{\mu_n}:=\frac{p_n^\prime}{np_n}$
pointwise converges
to $\C_\mu$  and $\C_{\mu'_n}:=\frac{p_n^{\prime\prime}}{(n-1)p'_n}$
pointwise converges
to $\C_{\mu'}$ a.e. in $\bC$. 
Consider the  sequence of differential equations satisfied by
$\{p_n\}$ and divided termwise by $n(n-1)p_n$:
\begin{equation}\label{eq:temp}
\begin{split} %
(1-z^2)\frac{p_n^{\prime\prime}}{(n-1)p^\prime_n} \cdot
\frac{p_n^\prime}{np_n}&+ \left(
\frac{(\bet_n-\al_n)-(\al_n+\bet_n+2)z}{n-1}\right)
\frac{p_n^\prime}{np_n} \\ %
{ }&+\frac{n+\al_n+\bet_n+1}{n-1}=0. %
\end{split} %
\end{equation}

If  for a subsequence of indices, $\left|\frac{\bet_n-\al_n}{n}\right|\to \infty$ while
$\left|\frac{\al_n+\bet_n}{n}\right|$ stays bounded, then the
Cauchy transform $\C_\mu$ of the limiting (along this subsequence) measure $\mu$ must
vanish identically in order for \eqref{eq:temp} to hold in the
limit $n\to \infty$. But $\CC_\mu\equiv 0$  is obviously
impossible.

On the other hand, if  for a subsequence of indices,
$\left|\frac{\al_n+\bet_n}{n}\right|\to \infty$ while
$\left|\frac{\bet_n-\al_n}{n}\right|$ stays bounded, then the
limit of \eqref{eq:temp} when $n\to \infty$ coincides with
$-z\C_\mu+1=0 \Leftrightarrow \C_\mu=\frac{1}{z}$  implying
$\mu=\delta_0$. Thus in Case (ii), the sequence  $\{\mu_n\}$
converges to $\delta_0$.

Now assume, that or a subsequence of indices, both $\left|\frac{\al_n+\bet_n}{n}\right|$ and
$\left|\frac{\bet_n-\al_n}{n}\right|$ tend to $\infty$. Then
dividing \eqref{eq:temp} by $\frac{\al_n+\bet_n}{n}$ and letting
$n\to \infty,$ we conclude that the sequence
$\left\{\frac{\bet_n-\al_n}{\al_n+\bet_n}\right\}$ must be
bounded. Therefore there exists its  subsequence which converges to
some $\kappa\in \bC$. Taking the limit along this subsequence, we
obtain
$$(z-\kappa)\C_\mu=1.$$

This is true for all $z,$ for which the Cauchy transform
converges, i.e. almost everywhere outside the support of $\mu$.
Using the main results of  \cite{BBB, BB}  claiming that  the
support of $\mu$ consists of  piecewise smooth compact curves
and/or isolated points together with the fact that $\C_\mu$ must
have a discontinuity along every curve in its support, we conclude
that the support of $\mu$ is the point $z=\kappa$. Thus in Case
(iii), the sequence $\{\mu_{n_m}\}$ converges to $\delta_\kappa$.
\end{proof}

The next statement provides more information about Case (i) of
Lemma~\ref{lm:basic2}.

\begin{proposition}\label{lm:higher} Assume that  the sequence $\{\mu_n\}$ of the root-counting measures for a sequence  of Jacobi polynomials $\{p_n(z)=P_n^{(\al_n,\bet_n)}(z)\}$   weakly   converges to a compactly supported measure $\mu$ in $\bC.$ Assume additionally that
$\lim_{n\to\infty}\frac{\al_n}{n}=A$ and $\lim_{n\to \infty} \frac {\bet_n}{n}=B$ with $1+A+B\neq 0$. Then, for any positive integer $j,$ the sequence $\{\mu_n^{(j)}\}$ of the root-counting measures for the sequence  $\{p^{(j)}_n(z)\}$ of the $j$-th derivatives converges to the same measure $\mu$.
\end{proposition}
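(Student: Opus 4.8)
Our plan is to reduce everything to the case $j=1$ and then bootstrap. By the classical differentiation formula $\frac{d}{dz}P_m^{(\gamma,\de)}(z)=\tfrac12(m+\gamma+\de+1)\,P_{m-1}^{(\gamma+1,\de+1)}(z)$, applied repeatedly, $p_n^{(j)}$ is a nonzero scalar multiple of the Jacobi polynomial $P_{n-j}^{(\al_n+j,\bet_n+j)}$; hence $\mu_n^{(j)}$ is its root-counting measure, and its parameters still satisfy $\frac{\al_n+j}{n-j}\to A$, $\frac{\bet_n+j}{n-j}\to B$ with $1+A+B\neq0$. In particular, $\{p_n^{(j-1)}\}$ is, up to scalars and after reindexing by degree, itself a sequence of Jacobi polynomials satisfying the hypotheses of the proposition, with $\mu_n^{(j-1)}$ in place of $\mu_n$. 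So, granting the case $j=1$, the general case follows by induction on $j$: at step $j$ we already know $\mu_n^{(j-1)}\to\mu$, and the case $j=1$ applied to $\{p_n^{(j-1)}\}$ yields $\mu_n^{(j)}\to\mu$. It therefore remains to prove: \emph{if $\al_n/n\to A$, $\bet_n/n\to B$ with $1+A+B\neq0$, and $\mu_n\to\mu$ weakly with $\mu$ compactly supported, then $\mu'_n\to\mu$.}

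To prove this we argue through subsequences. First, $\{\mu'_n\}$ is tight: pick a bounded convex open $\Om$ with $\lim_n\frac1n\#\{\text{zeros of }p_n\text{ in }\Om\}=1$ (possible since $\mu_n\to\mu$ with $\mu$ compactly supported); by Proposition~\ref{lm:bound}, for every $\eps>0$ almost all zeros of $p_n'$ lie in $\Om(\eps)$, so $\{\mu'_n\}$ is relatively compact for weak convergence and each of its limit points is a probability measure with compact support. Let $\mu'$ be one such limit point, attained along a subsequence $\{n_k\}$. Passing to a further subsequence and applying Lemma~\ref{lm:basic} to $\{\mu_{n_k}\}$ and to $\{\mu'_{n_k}\}$, we may assume that $\C_{\mu_{n_k}}=\frac{p_{n_k}'}{n_kp_{n_k}}\to\C_\mu$ and $\C_{\mu'_{n_k}}=\frac{p_{n_k}''}{(n_k-1)p_{n_k}'}\to\C_{\mu'}$ pointwise a.e.\ in $\bC$. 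We now pass to the limit in the normalized differential equation \eqref{eq:temp}: its quadratic term is the product of the two a.e.-convergent sequences just named, while the remaining coefficients converge, $\frac{(\bet_n-\al_n)-(\al_n+\bet_n+2)z}{n-1}\to(B-A)-(A+B)z$ and $\frac{n+\al_n+\bet_n+1}{n-1}\to1+A+B$. This gives, a.e.\ in $\bC$,
$$(1-z^2)\,\C_{\mu'}\,\C_\mu+\big((B-A)-(A+B)z\big)\,\C_\mu+(1+A+B)=0.$$

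Finally we compare this with Theorem~\ref{Theorem-1}, whose hypotheses hold for $\{p_n\}$ and which asserts that $\C_\mu$ satisfies, a.e., the equation \eqref{1.2}, i.e.\ $(1-z^2)\,\C_\mu^2+\big((B-A)-(A+B)z\big)\,\C_\mu+(1+A+B)=0$. Subtracting the two identities leaves $(1-z^2)\,\C_\mu\,(\C_{\mu'}-\C_\mu)=0$ a.e.\ in $\bC$. Here $1-z^2$ vanishes only at $z=\pm1$, while $\C_\mu$ vanishes only on a null set: at a.e.\ point \eqref{1.2} holds, and if $\C_\mu(z)=0$ there then $1+A+B=0$, contradicting our hypothesis --- this is the one place, besides the invocation of Theorem~\ref{Theorem-1} itself, where the assumption $1+A+B\neq0$ is used. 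Hence $\C_{\mu'}=\C_\mu$ a.e., and since a compactly supported measure is recovered from its Cauchy transform ($\mu=\tfrac1\pi\partial_{\bar z}\C_\mu$ in the distributional sense), $\mu'=\mu$. Thus every weak limit point of $\{\mu'_n\}$ equals $\mu$, and by relative compactness $\mu'_n\to\mu$, which finishes the case $j=1$ and hence the proof. The two points that need genuine care are the tightness of $\{\mu'_n\}$ --- exactly what Proposition~\ref{lm:bound} is designed to supply --- and the pointwise passage to the limit in the quadratic term of \eqref{eq:temp}, which forces one to extract a single subsequence along which both Cauchy transforms converge a.e.; after that, only the elementary algebra above remains.
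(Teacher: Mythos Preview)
Your argument is clean and the reduction to $j=1$ via the differentiation formula for Jacobi polynomials is a nice touch (the paper instead differentiates the ODE to handle higher derivatives). The tightness step and the passage to the limit in \eqref{eq:temp} are handled correctly, and the algebraic comparison at the end is valid.

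However, there is a genuine circularity: you invoke Theorem~\ref{Theorem-1} to obtain equation \eqref{1.2} for $\C_\mu$, but in this paper the proof of Theorem~\ref{Theorem-1} \emph{uses} Proposition~\ref{lm:higher}. Indeed, the paper's derivation of \eqref{1.2} requires knowing that $\frac{p_n''}{(n-1)p_n'}\to\C_\mu$ (not merely to some $\C_{\mu'}$), which is precisely the content of Proposition~\ref{lm:higher} for $j=1$. So as written your proof assumes what it is trying to prove. Your limiting relation $(1-z^2)\C_{\mu'}\C_\mu+((B-A)-(A+B)z)\C_\mu+(1+A+B)=0$ is correct and non-circular, but on its own it only determines $\C_{\mu'}$ in terms of $\C_\mu$; it does not yet force $\C_{\mu'}=\C_\mu$.

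The paper closes this gap differently, via logarithmic potentials and Lemma~\ref{lm:add2}: from $u\ge u'$ and the observation that $\frac{p_n''}{n(n-1)p_n}$ converges a.e.\ to something nonzero (precisely because $1+A+B\neq 0$ in \eqref{eq:temp}), one gets $u-u'=\lim_n\frac{1}{n}\log\bigl|\frac{p_n''}{n(n-1)p_n}\bigr|=0$, hence $\mu=\mu'$. You could repair your argument by replacing the appeal to Theorem~\ref{Theorem-1} with this potential-theoretic step; after that your subsequence and induction framework goes through, and indeed yields Theorem~\ref{Theorem-1} as a corollary rather than an input.
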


\begin{proof} Observe that  if an arbitrary polynomial sequence $\{p_m\}$ of
increasing degrees has almost all roots in a convex bounded set
$\Omega\subset \bC$, then, by Proposition~\ref{lm:bound},  almost all
roots of $\{p'_m\}$ are in $\Omega_\eps$, for any $\eps>0$.
Therefore, if the sequence $\{\mu_m\}$ of the root-counting
measures of $\{p_m\}$ weakly converges to a compactly supported
measure $\mu,$ then there exists at least one weakly converging
subsequence of $\{\mu'_m\}$. Additionally, by the Gauss-Lucas Theorem,
the support of its limiting measure belongs to  the (closure of
the) convex hull of the support of $\mu$. Thus the weak
convergence of $\{\mu_m\}$ implies the existence of a weakly
converging subsequence $\{\mu^\prime_{n_m}\}$.

\medskip
Proposition~\ref{lm:higher}  is obvious in Cases (ii) and (iii) of Lemma~\ref{lm:basic2}. Let us concentrate on the remaning Case (i). Our assumptions imply that along a subsequence of the sequence  $\left\{\frac{p'_n}{np_n}\right\}$ of Cauchy transforms
of polynomials $p_n$ 
   converges 
pointwise almost
everywhere. We first show that  the above  sequence
$\left\{\frac{p'_n}{np_n}\right\}$  can not converge to $0$ on a
set of positive measure.

\medskip
Indeed,  the differential equation
satisfied by $p_n$  after its  division by $n(n-1)p_n$ is given
by \eqref{eq:temp}.
Since the
sequences $\left\{\frac{\al_n+\bet_n}{n}\right\}$ and
$\left\{\frac{\bet_n-\al_n}{n}\right\}$ converge and $1+A+B\neq 0$,  equation \eqref{eq:temp} shows that   $\frac{p'_n} {np_n}$
cannot converge to $0$
on a set of positive measure. 
Analogously, we  see that
$\frac{p^{\prime\prime}_n}{(n-1)p'_n}$ cannot converge to 0 on a
set of positive measure either. Indeed, differentiating \eqref{eq:temp}, we get that $p_n'$ satisfies the equation
$$(1-z^2)p_n^{\prime\prime\prime}+((\bet_n-\al_n)-(\al_n+\bet_n+4)z)p_n^{\prime\prime}+(n(n+\al_n+\bet_n+1)+(\al_n+\bet_n+2))p_n' =0.$$
Using the same analysis as for $p_n$, we can  conclude
that the limit $\frac{p^{\prime\prime}_n}{n(n-1)p_n}$  along a subsequence exists
pointwise and is non-vanishing  almost everywhere.

Denote the logarithmic potentials of the root-counting measures
associated to $p_n$ and  $p'_n$ by $u_n$ and $u'_n$ respectively. Denote
their limits
 by $u$ and $u'$  (where $u'$ apriori is a limit only along some subsequence).  With a slight abuse of notation,  the following holds
$$|u - u'| =\lim_{n\to\infty} |u_n - u'_n | = \lim_{n\to\infty} \frac{1}{n} \log \left | \frac{p^{\prime\prime}_n} {n(n - 1)p_n} \right |= 0$$ due  to the above claim about $\frac{p^{\prime\prime}_n}{n(n-1)p_n}$ . But since $u \ge  u'$ by Lemma~\ref{lm:add2},  we see that $u = u'$ and, in particular $u'$ exists as a limit over the whole sequence.  Hence the asymptotic root-counting measures of $\{p_n\}$ and $\{p'_n\}$ actually coincide. Similar arguments apply to higher derivatives of the sequence $\{p_n\}$. 
\end{proof}

\begin{proof}[Proof of  Theorem~\ref{Theorem-1}]  The polynomial $p_n(z)=P_n^{ (\al_n,\bet_n) }(z)$ satisfies the equation~\eqref{eq:temp}.
  By Proposition~\ref{lm:higher} we know that, under the assumptions of Theorem~\ref{Theorem-1},  if $ \left\{\frac {p_n'}{np_n}\right\}$ converges  to $\C_\mu$ a.e. in $\bC,$ then the sequence $ \left\{\frac {p_n^{\prime\prime}}{np_n^\prime}\right\}$  also converges  to the same $\C_\mu$ a.e. in $\bC$.  Therefore, the expression $\frac{p_n^{\prime\prime}}{n^2p_n}=\frac{p_n^{\prime\prime}p_n^\prime}{n^2p_np_n^\prime}$ converges to $\C_\mu^2$ a.e. in $\bC$. Thus  $\C_\mu$ (which is well-defined a.e. in $\bC$) should satisfy the equation
$$(1-z^2)\C_{\mu}^2-((A+B)z+A-B)\C_\mu+A+B+1=0,$$
where $A=\lim_{n\to \infty}\frac{\al_n}{n}$ and $B=\lim_{n\to \infty}\frac{\bet_n}{n}$.
\end{proof}

\begin{remark} Apparently the condition that  the sequences $\left\{\frac{\al_n}{n}\right\}$ and
 $\left\{\frac{\bet_n}{n}\right\}$ are bounded should be enough for the conclusion of Theorem~\ref{Theorem-1}. (The existence of the limits $\lim \frac{\al_n}{n}$ and $\lim \frac{\bet_n}{n}$ should follow automatically with some weak additional restriction.) Indeed, since the sequences $\left\{\frac{\al_n}{n}\right\}$ and
 $\left\{\frac{\bet_n}{n}\right\}$ are bounded, we can  find at
 least one subsequence $\{n_m\}$ of indices along which both
 sequences of quotients converge. Assume that we have two possible
 distinct  (pairs of) limits $(A_1,B_1)$ and $(A_2,B_2)$ along
 different subsequences. But then the same complex-analytic function $\C_\mu(z)$ should satisfy a.e. two different algebraic equations of the form \eqref{1.2}  which is impossible at least for generic $(A_1,B_1)$ and $(A_2,B_2)$.
\end{remark}

\section{Preliminaries on quadratic differentials} \label{Section-4}
\setcounter{equation}{0}

In this section, we recall some definitions and results of the
theory of quadratic differentials on the complex sphere
$\overline{\mathbb{C}}=\mathbb{C}\cup\{\infty\}$. Most of these
results remain true for quadratic differentials defined on any
compact Riemann surface. But for the purposes of this paper, we
will focus on results concerning the domain structure and
properties of geodesics of quadratic differentials defined on
$\overline{\mathbb{C}}$. For more information on quadratic
differentials in general, the interested reader may consult
classical monographs of Jenkins \cite{Je} and Strebel \cite{Str}
and papers \cite{S1} and \cite{S2}.

A quadratic differential on a domain $D\subset
\overline{\mathbb{C}}$ is a differential form $Q(z)\,dz^2$ with
meromorphic $Q(z)$ and with conformal transformation rule
\begin{equation} \label{4.1} %
Q_1(\zeta)\,d\zeta^2=Q(\varphi(z))\left(\varphi'(z)\right)^2\,dz^2,
\end{equation}
where $\zeta=\varphi(z)$ is a conformal map from $D$ onto a domain
$G\subset \overline{\mathbb{C}}$. Then zeros and poles of $Q(z)$
are critical points of $Q(z)\,dz^2$, in particular, zeros and
simple poles are finite critical points of $Q(z)\, dz^2$. Below we
will use the following notations. By $H_p$, $C$, and $H$ we
denote, respectively, the set of all poles, set of all finite
critical points, and  set of all infinite critical points of
$Q(z)\,dz^2$. Also, we will use the following notations:
$\mathbb{C}'=\overline{\mathbb{C}}\setminus H$,
$\mathbb{C}''=\overline{\mathbb{C}}\setminus H_p$,
$\mathbb{C}'''=\overline{\mathbb{C}}\setminus (C\cup H)$.

 A trajectory (respectively, orthogonal trajectory) of $Q(z)\,dz^2$
is a closed analytic Jordan curve or maximal open analytic arc
$\gamma\subset D$ such
that %
$$ 
Q(z)\,dz^2>0 \quad {\mbox{along $\gamma$}} \quad \quad
({\mbox{respectively}}, Q(z)\,dz^2<0 \quad {\mbox{along
$\gamma$}}).
$$ 

A trajectory $\gamma$ is called \emph{critical} if at least one of
its end points is a finite critical point of $Q(z)\,dz^2$.  By a
closed critical trajectory we understand a critical trajectory
together with its end points $z_1$ and $z_2$ (not necessarily
distinct), assuming that these end points exist.

Let $\overline{\Phi}$ denote the closure of the set of points of
all critical trajectories of $Q(z)\,dz^2$. Then, by Jenkins' Basic
Structure Theorem \cite[Theorem~3.5]{Je}, the set
$\overline{\mathbb{C}}\setminus \overline{\Phi}$ consists of a
finite number of \emph{circle, ring, strip and end domains}. The
collection of all these domains together with so-called
\emph{density domains} constitute the so-called \emph{domain
configuration} of $Q(z)\,dz^2$. Here, we give definitions of
circle domains and strip domains only; these two types will appear
in our classification of possible domain configurations in
Section~5. Fig. 1--4 show several  domain configurations with
circle and strip domains. For the definitions of other domains, we
refer to \cite[Ch.~3]{Je}.

We recall that a \emph{circle domain} of $Q(z)\,dz^2$ is a
simply connected domain $D$ with the following properties: %
\begin{enumerate} %
\item[1)] %
$D$ contains exactly one critical point $z_0$, which is a second
order pole, %
\item[2)] %
the domain $D\setminus \{z_0\}$  is swept out by trajectories of
$Q(z)\,dz^2$ each of which is a Jordan curve separating $z_0$ from
the boundary $\partial D$,
\item[3)] %
$\partial D$ contains at least one finite critical
point. %
\end{enumerate} %

Similarly, a strip domain of $Q(z)\,dz^2$ is a simply connected
domain $D$ with the following properties: %
\begin{enumerate} %
\item[1)] %
$D$ contains no  critical points of $Q(z)\,dz^2$, %
\item[2)] %
$\partial D$ contains exactly two boundary points $z_1$ and $z_2$
belonging to the set $H$ (these boundary points may be situated at
the same point of $\overline{\mathbb{C}}$),
\item[3)] %
the points $z_1$ and $z_2$ divide $\partial D$ into two boundary
arcs each of which contains at least one finite critical point, %
\item[4)] %
$D$ is swept out by trajectories of $Q(z)\,dz^2$ each of which is
a Jordan arc connecting points $z_1$ and $z_2$.
\end{enumerate} %

As we mentioned in the Introduction, every quadratic differential
$Q(z)dz^2$   defines the so-called (singular) $Q$-metric with the
differential element $|Q(z)|^{1/2}\,|dz|$. If $\gamma$ is a
rectifiable arc in $D$ then its $Q$-length is
defined by %
$$ %
|\gamma|_Q=\int_\gamma |Q(z)|^{1/2}\,|dz|. %
$$ %

According to their $Q$-lengthes, trajectories of $Q(z)\,dz^2$ can
be of two types. A trajectory $\gamma$ is called \emph{finite} if
its $Q$-length is finite, otherwise $\gamma$ is called
\emph{infinite}. In particular, a critical trajectory $\gamma$ is
finite if and only if it has two end points each of which is a
finite critical point.

 An important property of quadratic differentials is that
transformation rule (\ref{4.1}) respects trajectories and
orthogonal trajectories and their $Q$-lengthes, as well as it
respects critical points together with their multiplicities and
trajectory structure nearby.

\begin{definition}  \label{Definition-4.1} %
A locally rectifiable (in the spherical metric) curve
$\gamma\subset \mathbb{C}'$  is called \emph{ a $Q$-geodesic} if
it is locally shortest in the $Q$-metric.
\end{definition} %

Next, given a quadratic differential $Q(z)\,dz^2$, we will discuss
geodesics in homotopic classes. For any two points $z_1,z_2\in
\mathbb{C}'$, let $\mathcal{H}^J=\mathcal{H}^J(z_1,z_2)$ denote
the set of all homotopic classes $H$ of Jordan arcs $\gamma\subset
\mathbb{C}'$ joining $z_1$ and $z_2$. Here the letter $J$ stands
for "Jordan". It is well-known that there is a countable number of
such homotopic classes. Thus, we may write
$\mathcal{H}^J=\{H_k^J\}_{k=1}^\infty$.

Every class $H_k^J$ can be extended to a larger class $H_k$ by
adding non-Jordan continuous curves $\gamma$ joining $z_1$ and
$z_2$, each of which is homotopic on $\mathbb{C}'$ to some curve
$\gamma_0\in H_k^J$ in the following sense.

There is a continuous function $\varphi(t,\tau)$ from the square
$I^2:=[0,1]\times [0,1]$ to $\mathbb{C}'$ such that %
\begin{enumerate} %
\item[1)]%
$\varphi(0,\tau)=z_1$, $\varphi(1,\tau)=z_2$ for all $0\le \tau\le
1$, \item[2)]
$\gamma_0=\{z=\varphi(t,0):\,0\le t\le 1\}$, %
\item[3)] %
$\gamma=\gamma_1=\{z=\varphi(t,1):\,0\le t\le 1\}$, %
\item[4)] %
For every fixed $\tau, 0<\tau<1$, the curve
$\gamma_\tau=\{z=\varphi(t,\tau):\,0\le t\le 1\}$ is in the class
$H_k^J$.
\end{enumerate} %

The following proposition is a special case of a well-known result
about geodesics, see e.g. \cite[Theorem~18.2.1]{Str}.

\begin{proposition} \label{Proposition-4.1} %
For every $k$, there is a unique curve $\gamma'\in H_k$, called
\emph{$Q$-geodesic} in $H_k$, such that $|\gamma'|_Q< |\gamma|_Q$
for all
$\gamma\in H_k$, $\gamma\not=\gamma'$. This geodesic is not necessarily a Jordan arc. %
\end{proposition}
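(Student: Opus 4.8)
\textbf{Proof proposal for Proposition~\ref{Proposition-4.1}.}

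The plan is to deduce this statement from the general theory of geodesics for the $Q$-metric, essentially by reducing to the case of a flat metric away from critical points and invoking a compactness/normal-families argument to produce the minimizer inside the enlarged homotopy class $H_k$. First I would fix the class $H_k$ and let $\ell=\inf\{|\gamma|_Q:\gamma\in H_k\}$. Since $H_k$ contains the nonempty Jordan subclass $H_k^J$, and any curve in $H_k^J$ joining $z_1$ to $z_2$ has finite $Q$-length (the $Q$-metric is locally finite away from $H$, and a Jordan arc can be chosen avoiding neighborhoods of the infinite critical points while staying in its homotopy class), we have $\ell<\infty$; also $\ell>0$ since $z_1\ne z_2$ and the $Q$-metric is nondegenerate on $\mathbb{C}'$. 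Then I would take a minimizing sequence $\{\gamma_n\}\subset H_k$ with $|\gamma_n|_Q\to\ell$, parametrized by $Q$-arclength (rescaled to $[0,1]$), so that the $\gamma_n$ are uniformly Lipschitz in the $Q$-metric, hence equicontinuous in the spherical metric on $\mathbb{C}'$; by Arzelà--Ascoli a subsequence converges uniformly to a limit curve $\gamma'$ joining $z_1$ and $z_2$.

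The next block of steps is to verify that $\gamma'$ genuinely lies in $H_k$ and realizes the infimum. Lower semicontinuity of $Q$-length under uniform convergence gives $|\gamma'|_Q\le\ell$, hence $=\ell$, provided $\gamma'$ stays in $\mathbb{C}'$; to ensure this one argues that a minimizing curve cannot approach an infinite critical point, because near such a point the $Q$-metric blows up and any curve venturing there can be shortened by pushing it away (this is where the choice of $H_k$ as homotopic rel endpoints in $\mathbb{C}'$ matters). For the homotopy class: since $\gamma_n\to\gamma'$ uniformly and all $\gamma_n$ lie eventually in a fixed compact subset of $\mathbb{C}'$ together with their limit, for large $n$ the curve $\gamma_n$ is homotopic to $\gamma'$ rel endpoints in $\mathbb{C}'$ (uniformly close curves in a manifold are homotopic), so $\gamma'\in H_k$. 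Uniqueness then follows from strict convexity of the length functional in the flat structure: in a local natural parameter $w=\int\sqrt{Q}\,dz$ the $Q$-metric is Euclidean $|dw|$, geodesics are straight segments, and two distinct $Q$-geodesics in the same homotopy class with the same endpoints would bound a "bigon" that could be shortened, contradicting minimality — or, more cleanly, one invokes that the universal-type cover unrolls the flat structure and distinct homotopic minimizers would lift to distinct straight segments with common endpoints, impossible. The final sentence, that $\gamma'$ need not be a Jordan arc, is illustrated rather than proved: the geodesic may run into a finite critical point (a zero of $Q$) and the shortest representative can be forced to backtrack or touch itself at such points, so no proof obligation there.

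The main obstacle I expect is the passage to the limit: controlling the minimizing sequence so that it does not escape toward the infinite critical points $H$ and so that the uniform limit is actually a rectifiable curve in the \emph{same} homotopy class rather than a degenerate or re-homotoped object. Handling this rigorously requires either an a priori confinement of minimizers to a compact subset of $\mathbb{C}'$ (using that $|Q|^{1/2}|dz|$ is large near $H$) or an explicit modification of the sequence near $H$, together with the observation that uniform convergence on that compact set preserves the homotopy class. Because all of this is standard in the theory of quadratic-differential metrics, in the paper I would simply cite \cite[Theorem~18.2.1]{Str} for the existence and uniqueness and limit the discussion to noting how the statement specializes to the classes $H_k$ introduced above; the sketch here records the argument that citation encapsulates.
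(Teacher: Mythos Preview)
Your proposal is correct and matches the paper's approach exactly: the paper does not prove this proposition at all but simply states it as ``a special case of a well-known result about geodesics'' and cites \cite[Theorem~18.2.1]{Str}, precisely as you suggest doing at the end of your proposal. The Arzel\`a--Ascoli sketch you provide is a reasonable outline of what that cited argument contains, but none of it appears in the paper itself.
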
 %

A $Q$-geodesic from $z_1$ to $z_2$ is called \emph{simple} if
$z_1\not=z_2$ and $\gamma$ is a Jordan arc on $\mathbb{C}'''$
joining $z_1$ and $z_2$. A $Q$-geodesic is called \emph{critical}
if both its end points belong to the set of finite critical points
of $Q(z)\,dz^2$.

\begin{proposition} \label{Proposition-4.2} %
Let $Q(z)\,dz^2$ be a quadratic differential on
$\overline{\mathbb{C}}$. Then for any two points $z_1,z_2\in
\mathbb{C}'$ and every continuous rectifiable curve $\gamma$ on
$\mathbb{C}'''$ joining the points $z_1$ and $z_2$ there is a
unique shortest curve $\gamma_0$ belonging to the homotopic class
of $\gamma$.

Furthermore,  $\gamma_0$ is a
geodesic in this class. %
\end{proposition}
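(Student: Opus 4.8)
The plan is to obtain Proposition~\ref{Proposition-4.2} from Proposition~\ref{Proposition-4.1}, the only additional work being a topological identification of homotopy classes together with a soft local‑to‑global argument. The first step is to show that the homotopy class of the given rectifiable curve $\gamma\subset\mathbb{C}'''$, with endpoints pinned at $z_1$ and $z_2$, coincides with one of the classes $H_k$ introduced just before Proposition~\ref{Proposition-4.1}. Here one exploits that $\gamma$ has finite $Q$-length: since the $Q$-metric is complete on $\mathbb{C}'$ and the $Q$-distance from any fixed point to a point of $H$ is infinite, $\gamma$ — and every finite‑$Q$-length competitor — is confined to a compact subset of $\mathbb{C}'$. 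Staying inside this compact set and inside $\mathbb{C}'''=\mathbb{C}'\setminus C$, one deforms $\gamma$ through Jordan arcs to a Jordan representative of its class; this is precisely the relation that places $\gamma$ in the class $H_k$ containing that representative. The same discussion also shows that every finite‑$Q$-length curve homotopic to $\gamma$ can be brought into $H_k$ at the cost of an arbitrarily small increase of $Q$-length.

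Granting this identification, Proposition~\ref{Proposition-4.1} supplies the unique curve $\gamma_0\in H_k$ with $|\gamma_0|_Q<|\sigma|_Q$ for every $\sigma\in H_k$, $\sigma\neq\gamma_0$. Together with the last remark of the previous paragraph, this gives that the infimum of $|\sigma|_Q$ over all curves $\sigma$ homotopic to $\gamma$ equals $|\gamma_0|_Q$ and is attained only at $\gamma_0$; hence $\gamma_0$ is the unique shortest curve homotopic to $\gamma$. That $\gamma_0$ is a $Q$-geodesic in the sense of Definition~\ref{Definition-4.1} is then automatic: if some subarc of $\gamma_0$ failed to be locally shortest, replacing it by a strictly shorter arc inside a small disc would leave the homotopy class unchanged and would contradict the global minimality of $\gamma_0$.

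I expect the first step to be the main obstacle: reconciling the possibly intricate self‑intersection structure of an arbitrary rectifiable curve $\gamma$ with the classes $H_k$, which are assembled out of Jordan arcs, and verifying that the deformation to a Jordan representative can indeed be carried out through Jordan arcs without leaving $\mathbb{C}'$ or meeting the critical set. Finiteness of $|\gamma|_Q$ is exactly what makes this manageable, by confining everything to a compact part of $\mathbb{C}'$. A self‑contained alternative that sidesteps this bookkeeping would be to minimize $|\,\cdot\,|_Q$ directly over the homotopy class of $\gamma$: a length‑minimizing sequence lies in a fixed compact subset of $\mathbb{C}'$ by the remark on $Q$-distances to $H$, so by the Arzel\`a--Ascoli theorem it has a uniformly convergent subsequence whose limit attains the infimum; local minimality of the limit follows as above, and uniqueness can once more be read off from Proposition~\ref{Proposition-4.1} applied to the relevant class $H_k$. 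Reducing to Proposition~\ref{Proposition-4.1} is the shorter route, and I would expect the authors to take it.
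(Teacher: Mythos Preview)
The paper does not prove Proposition~\ref{Proposition-4.2}. It is stated without proof in the preliminaries section, alongside Proposition~\ref{Proposition-4.1}, Lemma~\ref{Lemma-4.1}, and Lemma~\ref{Lemma-4.2}, all of which are presented as background facts from the classical theory of quadratic differentials. Proposition~\ref{Proposition-4.1} is explicitly attributed to Strebel's monograph \cite[Theorem~18.2.1]{Str}, and Proposition~\ref{Proposition-4.2} is evidently meant in the same spirit: a known result quoted for later use, not something the authors establish.

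Your proposal is therefore not competing with any argument in the paper. As a sketch it is reasonable: reducing to Proposition~\ref{Proposition-4.1} is the natural move, and you have correctly located the one genuine subtlety---placing an arbitrary rectifiable, possibly self-intersecting curve $\gamma\subset\mathbb{C}'''$ into one of the classes $H_k$, whose definition requires a homotopy through \emph{Jordan} arcs in $\mathbb{C}'$. Your use of finite $Q$-length to confine everything to a compact subset of $\mathbb{C}'$ is the right device for this. The alternative you outline via direct minimization and Arzel\`a--Ascoli is also standard and is essentially how such results are handled in \cite{Str}. Either route would serve; the authors simply do not take one.
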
 %

\begin{definition}  \label{Definition-4.2} %
Let $z_0\in \mathbb{C}'$. A geodesic ray from $z_0$ is a maximal
simple rectifiable arc $\gamma:[0,1) \to \mathbb{C}'''\cup\{z_0\}$
with $\gamma(0)=z_0$ such that for every $t$, $0<t<1$, the arc
$\gamma((0,1))$ is a geodesic from $z_0$ to $z=\gamma(t)$.
\end{definition} %

\begin{lemma} \label{Lemma-4.1} %
Let $D$ be a circle domain of $Q(z)\,dz^2$ centered at $z_0$ and
let $\gamma_a:[0,1)\to \mathbb{C}'''\cup\{a\}$ be a geodesic ray
from $a\in
\partial D$ such that $\gamma_a([0,t_0])\subset \overline{D}$ for
some $t_0>0$.

Then either $\gamma_a$  enters into $D$ through the point $a$ and
then approaches to $z_0$ staying in $D$ or $\gamma_a$ is an
arc of some critical trajectory $\gamma\subset \partial D$. %
\end{lemma}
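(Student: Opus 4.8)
\textbf{Proof proposal for Lemma~\ref{Lemma-4.1}.}

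The plan is to use the local normal form of the $Q$-metric inside a circle domain together with the minimizing property of geodesic rays. First I would recall that, by the definition of a circle domain, $D$ contains exactly one critical point $z_0$, a double pole, and $D\setminus\{z_0\}$ is swept out by closed trajectories, each a Jordan curve separating $z_0$ from $\partial D$. Choosing a local uniformizing parameter, the natural parameter $w=\int\sqrt{Q(z)}\,dz$ maps a punctured neighborhood of $z_0$ to a punctured disk (or an annulus, after exponentiating), so that in the $w$-plane the trajectories become concentric circles and the orthogonal trajectories become radial segments; the $Q$-metric becomes (a multiple of) $|dw|$. In particular, the orthogonal trajectories through $D$ are exactly the $Q$-geodesics joining $\partial D$ to $z_0$, and they are the unique shortest curves in their homotopy class by Proposition~\ref{Proposition-4.1} and Proposition~\ref{Proposition-4.2}.

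The key steps, in order, are: (1) Establish the normal form above and deduce that any geodesic arc contained in $\overline D$ and not meeting $\partial D$ in its interior must, in the $w$-coordinate, be a straight segment; hence it is a sub-arc of an orthogonal trajectory, which in $D$ terminates at $z_0$. (2) Analyze what a geodesic ray $\gamma_a$ starting at $a\in\partial D$ can do for small $t$: since $\gamma_a([0,t_0])\subset\overline D$, either the ray immediately leaves $\partial D$ and enters the open domain $D$, or it stays on $\partial D$ for a while. (3) In the first case, apply step (1): once inside $D$ the geodesic must coincide with the orthogonal trajectory through the point where it enters, so it is forced to spiral/run monotonically toward $z_0$ and cannot return to $\partial D$ (a returning arc would fail to be length-minimizing, since the orthogonal trajectory gives a strictly shorter connection, by the normal form). (4) In the second case, a geodesic lying in $\partial D$ must be a sub-arc of a critical trajectory: $\partial D$ consists of trajectories and finite critical points on them, and a locally shortest curve on $\overline{\mathbb C}$ that runs along a trajectory arc is exactly that trajectory arc (any trajectory is a $Q$-geodesic between two of its non-critical points, and a geodesic cannot cross a critical point transversally and remain in $\partial D$). (5) Rule out switching between the two behaviors: once $\gamma_a$ enters $D$ it cannot come back to $\partial D$ by the minimality argument in (3), and if it stays on $\partial D$ initially it is pinned to the boundary trajectory arc; so exactly one of the two alternatives occurs along the whole ray.

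The main obstacle I expect is step (3): carefully proving that a geodesic which has entered the interior of the circle domain cannot exit again, i.e. that the only geodesic rays inside $D$ are the orthogonal trajectories approaching $z_0$. The cleanest route is to pass to the $w$-coordinate where trajectories are concentric circles, note that any curve in $D$ from $\partial D$ back to $\partial D$ has $Q$-length at least that of the shortest ``radial down-and-up'' path, and then argue that such a path is never locally shortest because perturbing it toward $z_0$ and back strictly decreases length unless it degenerates — equivalently, a genuine geodesic ray is monotone in the ``radial'' $|w|$-coordinate, hence runs straight to $z_0$. A secondary subtlety is handling the point $a\in\partial D$ itself, which may be a finite critical point or merely a regular boundary point of a boundary trajectory; in either case one checks the local picture of geodesics emanating into $\overline D$ near $a$, using that $\partial D$ contains at least one finite critical point and that near a regular boundary trajectory point the geodesic either follows the trajectory or crosses into $D$ transversally along an orthogonal trajectory.
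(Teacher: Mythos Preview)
The paper does not include a proof of Lemma~\ref{Lemma-4.1}; it is stated in Section~\ref{Section-4} among the preliminaries on quadratic differentials as a standard fact, so there is no paper's proof to compare against. Evaluating your argument on its own merits:

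Your overall strategy---pass to the flat model of the circle domain and observe that $Q$-geodesics become Euclidean straight lines---is the right one. However, there is a genuine error in step~(1) and the first sentence of step~(3): you assert that a geodesic arc inside $D$ ``is a sub-arc of an orthogonal trajectory.'' This is false. In the natural coordinate (where $D\setminus\{z_0\}$ becomes a half-infinite flat cylinder $S^1\times[0,\infty)$ with the Euclidean metric), geodesics are \emph{arbitrary} straight lines, not only the vertical ones. A geodesic entering $D$ at angle $\theta\in(0,\pi)$ with the boundary circle is a helix on the cylinder, not a radial segment. Your own phrase ``spiral/run monotonically toward $z_0$'' later in step~(3) suggests you sense this, but it contradicts your preceding sentence, and the justification ``the orthogonal trajectory gives a strictly shorter connection'' is not the right mechanism.

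The fix is simple and actually shortens the argument. For step~(3): on the flat half-cylinder, a straight line issuing from the boundary with any positive vertical component has height increasing linearly in arclength, so it never returns to height~$0$; hence back in the $z$-plane the geodesic stays in $D$ and approaches $z_0$. For step~(4): by Definition~\ref{Definition-4.2} the ray lies in $\mathbb{C}'''\cup\{a\}$, so after time $0$ it avoids all critical points. If it starts along $\partial D$, it is the height-$0$ line on the cylinder and therefore stays on a single boundary trajectory arc until it would meet a zero of $Q$, where it must terminate. This also handles your step~(5) automatically: the direction at $a$ decides once and for all whether the ray is horizontal (boundary trajectory) or has positive vertical component (enters $D$), and no later switching is possible since the ray never passes through another critical point.
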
 %

\begin{lemma} \label{Lemma-4.2} %
Let $a$ be a second order pole of $Q(z)\,dz^2$ and let $\Gamma$ be
the homotopic class of closed curves on $\mathbb{C}''$ separating
$a$ from $H_p\setminus \{a\}$. Then there is exactly one real
$\theta_0$, $0\le \theta_0<2\pi$, such that the quadratic
differential $e^{i\theta_0}Q(z)\,dz^2$ has a circle domain, say
$D_0$, centered at $a$. Furthermore, the boundary $\partial D_0$
is the only critical $Q$-geodesic (non-Jordan in general) in the
class $\Gamma$.

In particular, $\Gamma$ may contain at most one critical geodesic
loop. %
\end{lemma}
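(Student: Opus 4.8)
The plan is to reduce the statement to a one-parameter normal-form analysis near the double pole $a$, using the fact that the residue-type invariant of $Q(z)\,dz^2$ at a second-order pole controls the local trajectory picture. First I would recall the local structure of trajectories near a double pole: in a suitable local coordinate $w$ centered at $a$ one has $Q\,dz^2 = \left(\frac{c_{-2}}{w^2} + \cdots\right)dw^2$ with $c_{-2}\neq 0$, and the behaviour of trajectories depends on whether $c_{-2}$ is a negative real number (in which case the trajectories near $a$ are closed curves encircling $a$, the ``circle domain'' case), a positive real number (radial trajectories), or non-real (spiralling trajectories). Writing $c_{-2}=|c_{-2}|e^{i\psi}$, multiplying $Q$ by $e^{i\theta}$ rotates $\psi$ to $\psi+\theta$, so there is exactly one $\theta_0\in[0,2\pi)$ making $e^{i\theta_0}c_{-2}$ a negative real, namely $\theta_0 = \pi-\psi \pmod{2\pi}$. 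This gives existence and uniqueness of the rotation angle for which closed trajectories encircle $a$ locally.

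Next I would promote this local picture to a global circle domain. Consider the quadratic differential $Q_0 := e^{i\theta_0}Q$. By Jenkins' Basic Structure Theorem the component of $\overline{\mathbb C}\setminus\overline\Phi_0$ containing $a$ is one of the standard domain types; since near $a$ the trajectories are Jordan curves surrounding $a$, this component must be a circle or ring domain. A ring domain would require a second second-order pole (or a closed geodesic) as its other boundary component; but one can arrange, or argue directly, that the component containing $a$ is in fact a circle domain $D_0$ centered at $a$, whose boundary contains a finite critical point. (Here I would invoke the structure theorem together with the classification of the possible boundary behaviour; the trajectories of $Q_0$ issuing from $a$ fill a maximal annular neighbourhood, and its outer boundary $\partial D_0$, being a level set of the $Q_0$-distance that cannot be enlarged, must pass through a zero of $Q$.) The boundary $\partial D_0$ is then a closed $Q_0$-geodesic separating $a$ from the other poles, hence lies in the homotopy class $\Gamma$; and since $Q_0$-geodesics and $Q$-geodesics coincide as curves (rotation of $Q$ by a unimodular constant does not change the metric $|Q|^{1/2}|dz|$), $\partial D_0$ is a critical $Q$-geodesic in $\Gamma$.

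For uniqueness within $\Gamma$: suppose $\gamma$ is any critical $Q$-geodesic in the class $\Gamma$. By Proposition~\ref{Proposition-4.1} (or Proposition~\ref{Proposition-4.2}) the geodesic in a homotopy class is unique, so it suffices to show $\partial D_0\in\Gamma$ together with the observation that the extended class $H_k\supset H_k^J$ containing $\partial D_0$ is exactly the class realized by closed curves separating $a$ from $H_p\setminus\{a\}$; then any critical geodesic homotopic to such a curve must coincide with $\partial D_0$. The final assertion, that $\Gamma$ contains at most one critical geodesic \emph{loop}, is then immediate: a critical geodesic loop in $\Gamma$ is in particular a critical $Q$-geodesic in $\Gamma$, hence equals $\partial D_0$.

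I expect the main obstacle to be the step certifying that the $a$-component of $\overline{\mathbb C}\setminus\overline\Phi_0$ is genuinely a \emph{circle} domain and not a ring domain, and that its outer boundary necessarily contains a finite critical point — this requires a careful application of the Basic Structure Theorem and the maximality of the annulus swept by closed trajectories around $a$, rather than a short computation. A secondary technical point is to make precise the identification of the homotopy class of $\partial D_0$ with $\Gamma$ so that Proposition~\ref{Proposition-4.1}'s uniqueness applies verbatim; this is routine but must be stated carefully since $\partial D_0$ need not be a Jordan curve.
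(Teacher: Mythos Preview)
The paper does not actually prove Lemma~\ref{Lemma-4.2}: it is stated in Section~\ref{Section-4} as a preliminary fact from the theory of quadratic differentials, alongside Lemma~\ref{Lemma-4.1}, with implicit reference to the monographs of Jenkins and Strebel. So there is no in-paper argument to compare your proposal against.

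On the substance of your sketch: the overall strategy is sound and is the standard one. Two remarks. First, your worry about ruling out a ring domain is misplaced --- by definition (see the domain descriptions preceding the lemma) ring domains contain no critical points at all, so once you know $e^{i\theta_0}c_{-2}<0$ and hence that closed trajectories surround $a$, the Basic Structure Theorem forces the component containing $a$ to be a circle domain; there is nothing further to check there. Second, Proposition~\ref{Proposition-4.1} as stated in the paper concerns homotopy classes of \emph{arcs} $\mathcal{H}^J(z_1,z_2)$ with fixed endpoints, not free homotopy classes of closed curves; to conclude uniqueness in the class $\Gamma$ you need the closed-curve analogue (e.g.\ \cite[Theorem~16.3]{Str}), which says the $Q$-shortest closed curve in a nontrivial free homotopy class on $\mathbb{C}''$ is unique. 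With that substitution your uniqueness step goes through, since $\partial D_0$ is the length-minimizer in $\Gamma$ (every closed trajectory of $e^{i\theta_0}Q$ in $D_0$ lies in $\Gamma$ and has the same $Q$-length, and any competitor not contained in $\overline{D_0}$ is strictly longer).
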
 %

We will need some simple mapping properties of the canonical
mapping related to the quadratic differential $Q(z)\,dz^2$, which
is
defined by %
$$ %
F(z)=\int_{z_0} \sqrt{Q(z)}\,dz %
$$ %
with some $z_0\in \overline{\mathbb{C}}$ and some fixed branch of
the radical. A simply connected domain $D$ without critical points
of $Q(z)\,dz^2$ is called a $Q$-rectangle if the boundary of $D$
consists of two arcs of trajectories of $Q(z)\,dz^2$ separated by
two arcs of orthogonal trajectories of this quadratic
differential. As well a canonical mapping $F(z)$ maps any
$Q$-rectangle conformally onto a geometrical rectangle in the
plane with two sides parallel to the horizontal axis.

\section {Cauchy transforms satisfying quadratic equations and quadratic differentials} \label{sq-roots}

Below we  relate the question for which triples of polynomials
$(P,Q,R)$ the  equation

\begin{equation}\label{quadr}
P(z)\C^2+Q(z)\C+R(z)=0,
\end{equation}
with $\deg P=n+2,\;\deg Q\le n+1,\;\deg R\le n$
 admits a compactly supported signed measure $\mu$ whose Cauchy transform satisfies
 \eqref{quadr} almost everywhere in $\bC$ to a certain problem about rational quadratic differentials. We call such measure $\mu$ a {\it motherbody measure} for \eqref{quadr}.

For a given quadratic differential $\Psi$ on  a compact surface  $\mathcal R,$  denote by
$K_\Psi\subset \mathcal R$ the union of all its critical
trajectories and critical points. (In general, $K_\Psi$ can be very
complicated. In particular, it can be dense in some subdomains of
$\mathcal R$.)  We denote by $DK_\Psi\subseteq K_\Psi$ (the closure
of) the set of  finite critical trajectories of
\eqref{eq:maindiff}. (One can  show that $DK_\Psi$ is an imbedded
(multi)graph in $\mathcal R$. Here by a {\it multigraph} on a
surface we mean a graph with possibly multiple edges and loops.)
Finally, denote by $DK^0_\Psi\subseteq DK_\Psi$ the subgraph of
$DK_\Psi$ consisting of (the closure of) the set of  finite
critical trajectories whose both ends are zeros of $\Psi$.

A non-critical trajectory $\gamma_{z_0}(t)$ of a meromorphic
$\Psi$  is called \emph{closed} if $\exists \ T>0$ such that
$\gamma_{z_0}(t+T)=\gamma_{z_0}(t)$ for all $t\in\mathbb{R}$. The
least such $T$ is called the \emph{period} of $\gamma_{z_0}$. A
quadratic differential $\Psi$ on a compact Riemann surface
$\mathcal R$ without boundary is called \emph{Strebel}  if the set
of its closed trajectories covers $\mathcal R$ up to a set of
Lebesgue measure
 zero.

\medskip
Going back to Cauchy transforms, we formulate the following necessary condition of the existence
of a motherbody measure for \eqref{quadr}.

\begin{proposition}\label{pr:stand}
Assume that equation \eqref{quadr} admits a signed motherbody measure $\mu$. Denote by    $D(z)=Q^2(z)-4P(z)R(z)$  the discriminant of equation \eqref{quadr}.
Then the following two conditions hold:

\noindent {\rm (i)} any connected smooth curve in the support of $\mu$
coincides with a horizontal trajectory of the quadratic
differential
\begin{equation}\label{eq:maindiff}
\Theta=-\frac{D(z)}{P^2(z)}dz^2=\frac{4P(z)R(z)-Q^2(z)}{P^2(z)}dz^2.
\end{equation}

\noindent
{\rm (ii)}  the support of $\mu$  includes all branching points of \eqref{quadr}.
\end{proposition}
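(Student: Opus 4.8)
The plan is to exploit the pointwise relation between the Cauchy transform and the polynomial data almost everywhere, together with the structural description of the support of a motherbody measure. First I would recall that, away from its support, $\C=\C_\mu$ is holomorphic and, by \eqref{quadr}, is locally one of the two branches $\C_\pm(z)=\frac{-Q(z)\pm\sqrt{D(z)}}{2P(z)}$ of the algebraic function defined by the equation; here $D(z)=Q^2(z)-4P(z)R(z)$ is the discriminant. The jump of $\C_\mu$ across a smooth arc $\gamma$ of $\supp\mu$ is, by the Plemelj--Sokhotski formula, given (up to the constant $2\pi i$) by the density of $\mu$ with respect to arclength, and in particular this jump is purely tangential: the real part of $\C_\mu$ extends continuously across $\gamma$ while the imaginary part jumps. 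On the other hand, on the two sides of $\gamma$ the analytic continuations of $\C_\mu$ must be the two distinct branches $\C_+$ and $\C_-$, since a single branch continuing analytically across $\gamma$ would force the jump to vanish and hence $\gamma$ would not be in the support. Thus along $\gamma$ we have $\C_+ - \C_- = \pm\frac{\sqrt{D(z)}}{P(z)}$, and the condition that this difference be purely imaginary (no tangential-vs-normal mismatch: more precisely, that $\C_+ + \C_-$ is real-analytic across while $\C_+ - \C_-$ is imaginary along the arc) translates into $\left(\frac{\sqrt{D(z)}}{P(z)}\right)^2 (dz)^2 < 0$ along $\gamma$, i.e. $\frac{D(z)}{P^2(z)}\,dz^2 < 0$, equivalently $\Theta = -\frac{D(z)}{P^2(z)}\,dz^2 > 0$ along $\gamma$. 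This is exactly the statement that $\gamma$ is a horizontal trajectory of $\Theta$, proving (i).

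For the computation that turns "the jump is tangential" into "$\Theta>0$ along $\gamma$", I would parametrize $\gamma$ by arclength $z=z(s)$, write the density of $\mu$ as $h(s)\,ds$ with $h$ real, and use that the two boundary values of $\C_\mu$ from the two sides differ by $\mp 2\pi i\, h(s)/|z'(s)|$ times a unit factor depending on orientation; matching this with $\C_+ - \C_- = \pm\sqrt{D}/P$ along $\gamma$ forces $\sqrt{D(z(s))}/P(z(s))$ to be $i\cdot(\text{real})\cdot \overline{z'(s)}/|z'(s)|$, and squaring gives $D(z)/P^2(z)\cdot (z'(s))^2 < 0$. This is the content of \cite{BoSh} and I would simply cite the corresponding lemma there rather than redo it in full.

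For part (ii): a branching point of \eqref{quadr} is a point $z_0$ where the two branches $\C_\pm$ coincide, i.e. a zero of $D(z)$ that is not cancelled by $P$. Suppose such a $z_0$ were not in $\supp\mu$. Then on a punctured neighbourhood of $z_0$ disjoint from $\supp\mu$, $\C_\mu$ is a single-valued holomorphic function (being holomorphic off the support), yet it must also be a branch of the algebraic function $\C$, which genuinely ramifies at $z_0$ — a single-valued holomorphic function cannot agree with a ramified branch on a punctured neighbourhood. More carefully: if $z_0\notin\supp\mu$ then $\C_\mu$ is holomorphic at $z_0$, so near $z_0$ it is one fixed branch; but each branch of $\C$ has an algebraic (square-root type) singularity at a simple zero of $D$, contradiction. (At a higher-order zero of $D$ one argues similarly after accounting for the order.) Hence $z_0\in\supp\mu$.

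The main obstacle is the careful Plemelj computation in (i) — one must be precise about orientations and about the sense in which "$\C_+ + \C_-$ is real across $\gamma$" holds (this uses that $Q/P$ is real-analytic, which is automatic since $Q,P$ are polynomials, so $\C_+ + \C_- = -Q/P$ is globally meromorphic and its restriction to $\gamma$ is irrelevant to the jump), and to verify that the quantity $\sqrt{D}/P$ has the stated purely imaginary behaviour precisely when $\Theta>0$. Everything else is soft: holomorphy of $\C_\mu$ off the support, the Plemelj jump formula, the structure theorem for supports of motherbody measures from \cite{BBB, BB} that guarantees $\supp\mu$ is a union of smooth arcs and points, and the elementary fact that a single-valued holomorphic germ cannot equal a ramified algebraic germ.
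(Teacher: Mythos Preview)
Your proposal is correct and follows essentially the same approach as the paper: Plemelj--Sokhotski for part~(i) to conclude that the tangent direction along any smooth arc of $\supp\mu$ satisfies $-D(z)/P^2(z)\,dz^2>0$, and single-valuedness of $\C_\mu$ off the support for part~(ii). Your write-up is more detailed than the paper's (which dispatches both parts in a few lines), and the initial heuristic phrasing ``the real part of $\C_\mu$ extends continuously across $\gamma$ while the imaginary part jumps'' is not literally right---the jump is tangential in the sense that $(\C_+-\C_-)\,dz$ is purely imaginary, not that $\C_+-\C_-$ itself is---but your subsequent arclength computation gets this straight.
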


\noindent {\it Remark.} Observe that if $P(z)$ and $Q(z)$ are
coprime, the set of all branching points coincides with the set of
all zeros of $D(z)$.  In particular, in this case  part (ii) of
Proposition~\ref{pr:stand} implies that the set $DK_\Theta^0$ for
the differential $\Theta$ should contain all  zeros  of $D(z)$.

\medskip
\noindent {\it Remark.} Proposition~\ref{pr:stand} applied to
quadratic differential $Q(z)\,dz^2$ of Theorem~\ref{Theorem-1}
implies Theorem~\ref{Theorem-2}.

\begin{proof} The fact that every curve in $\text{supp}(\mu)$ should coincide with some horizontal trajectory of \eqref{eq:maindiff} is well-known and follows from the  Plemelj-Sokhotsky's formula. It is based on the local observation that if a real measure $\mu=\frac{1}{\pi}\frac{\partial \C}{\partial \bar z}$ is supported
 on a smooth curve $\ga$, then the tangent to $\gamma$ at any  point $z_0\in \gamma$ should be perpendicular to $\overline {\C_1(z_0)}- \overline{\C_2(z_0)}$ where $\C_1$ and $\C_2$ are the one-sided limits of $\C$ when  $z\to z_0$, see e.g. \cite{BR}. (Here\;  $\bar {}$\;  stands for the usual complex  conjugation.) Solutions of \eqref{quadr} are given by $$\C_{1,2}=\frac{-Q(z)\pm \sqrt{Q^2(z)-4P(z)R(z)}}{2P(z)},$$  their difference being $$\C_1-\C_2=\frac{\sqrt{Q^2(z)-4P(z)R(z)}}{P(z)}.$$
Since the tangent line to the support of the real motherbody measure
$\mu$ satisfying \eqref{quadr} at its arbitrary smooth point
$z_0$,   is  orthogonal to $\overline{ \C_1(z_0)}-\overline
{\C_2(z_0)},$ it is exactly given by the condition
$\frac{4P(z_0)R(z_0)-Q^2(z_0)}{P^2(z_0)}dz^2>0$. The latter
condition   defines the horizontal trajectory of $\Theta$ at
$z_0$.


  Finally the observation that $\text{supp }\mu$ should contain all branching points of \eqref{quadr} follows immediately from the  fact that $\C_\mu$ is a well-defined univalued function in $\bC\setminus  \text{supp }\mu$.
\end{proof}

In many special cases  statements similar to
Proposition~\ref{pr:stand}  can be found in the literature, see
e.g. recent \cite {AMFMGT} and references therein.

\medskip
Proposition~\ref{pr:stand} allows us, under mild nondegeneracy
assumptions, to formulate  necessary and sufficient conditions for
the existence of a motherbody measure for \eqref{quadr}  which
however are difficult to verify. Namely, let $\Ga\subset
\bCP^1\times \bCP^1$ with affine coordinates $(\C,z)$ be the algebraic
curve given by (the projectivization of) equation \eqref{quadr}.
$\Ga$ has bidegree $(2,n+2)$ and is hyperelliptic. Let
$\pi_z:\Ga\to \bC$ be the projection of $\Ga$ on  the $z$-plane
$\bCP^1$ along the $\C$-coordinate. From \eqref{quadr} we observe
that $\pi_z$ induces a branched double covering of $\bCP^1$ by
$\Ga$.  If $P(z)$ and $Q(z)$ are coprime and if $\deg D(z)=2n+2$,
the set of all branching points of $\pi_z: \Ga\to \bCP^1$
coincides with the set of all zeros of $D(z)$. (If $\deg
D(z)<2n+2,$ then $\infty$ is also a branching pont of $\pi_z$ of
multiplicity $2n+2-\deg D(z)$.)   We need the following lemma.

\begin{lemma}\label{lm:poles} If $P(z)$ and $Q(z)$ are coprime,
then at each pole of \eqref{quadr} i.e. at each zero  of $P(z)$,
only one of two branches of $\Ga$ goes to  $\infty$. Additionally the residue
of this branch at this zero equals that of $-\frac {Q(z)}{P(z)}$.
\end{lemma}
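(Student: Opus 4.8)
The plan is to work locally near a zero $z_0$ of $P(z)$ and read off the asymptotic behavior of the two branches of $\Ga$ from the quadratic equation \eqref{quadr} itself. Write $\C_{1,2}(z)=\frac{-Q(z)\pm\sqrt{D(z)}}{2P(z)}$ with $D(z)=Q^2(z)-4P(z)R(z)$. Since $P$ and $Q$ are coprime, $Q(z_0)\neq 0$, so near $z_0$ we have $D(z)=Q^2(z)-4P(z)R(z)=Q(z)^2\bigl(1-4P(z)R(z)/Q(z)^2\bigr)$, and the quantity $4PR/Q^2$ vanishes at $z_0$; hence $\sqrt{D(z)}$ is holomorphic and nonvanishing at $z_0$ for a suitable choice of branch, with $\sqrt{D(z_0)}=\pm Q(z_0)\neq 0$. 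First I would fix the branch so that $\sqrt{D(z)}\to Q(z_0)$ as $z\to z_0$; then the numerator $-Q(z)+\sqrt{D(z)}$ of $\C_1$ tends to $-Q(z_0)+Q(z_0)=0$, while the numerator $-Q(z)-\sqrt{D(z)}$ of $\C_2$ tends to $-2Q(z_0)\neq 0$. Since the denominator $P(z)$ vanishes at $z_0$, this shows $\C_2$ has a pole at $z_0$ and $\C_1$ stays finite there (once one checks the vanishing of the numerator of $\C_1$ is only to first order, so no cancellation pushes $\C_1$ to infinity — this follows because $D(z)-Q(z)^2 = -4P(z)R(z)$ vanishes to exactly the order of the zero of $P$ at $z_0$, assuming $R(z_0)\neq 0$; the coprimality of $P,Q$ is what guarantees the relevant factor of $Q$ does not vanish). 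Thus exactly one of the two branches goes to $\infty$ at each zero of $P$.

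For the residue computation, the key identity is that the \emph{sum} of the two branches is rational with no square root: $\C_1(z)+\C_2(z) = -\dfrac{Q(z)}{P(z)}$ directly from Vi\`ete's formula for \eqref{quadr}. Since $\C_1$ is holomorphic at $z_0$, it contributes nothing to the principal part of $\C_1+\C_2$ at $z_0$; therefore the principal part of $\C_2$ at $z_0$ equals the principal part of $-Q(z)/P(z)$ at $z_0$. In particular the residues agree:
\[
\operatorname*{Res}_{z=z_0}\C_2 \;=\; \operatorname*{Res}_{z=z_0}\Bigl(-\frac{Q(z)}{P(z)}\Bigr).
\]
This is exactly the claimed statement, with $\C_2$ being "the branch that goes to $\infty$." (Strictly, whether it is labeled $\C_1$ or $\C_2$ depends on the branch choice of $\sqrt{D}$, but the argument shows that whichever branch blows up is the one whose principal part matches that of $-Q/P$.)

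I would organize the write-up as: (1) coprimality $\Rightarrow Q(z_0)\neq 0 \Rightarrow \sqrt{D}$ holomorphic and nonzero near $z_0$; (2) branch choice and the finite/infinite dichotomy for $\C_1,\C_2$; (3) the Vi\`ete identity $\C_1+\C_2=-Q/P$ and extraction of residues. I expect the only genuinely delicate point to be step (2): one must rule out the possibility that the numerator of the finite-looking branch $\C_1$ secretly vanishes to as high an order as $P$ at $z_0$ (which would make $\C_1$ blow up as well, contradicting "only one branch"). This is handled by noting $-Q(z)+\sqrt{D(z)}$ and $P(z)$ cannot share a zero of matching order: the product of the two numerators is $(-Q)^2-D = 4PR$, so $\bigl(-Q+\sqrt D\bigr)\cdot\bigl(-Q-\sqrt D\bigr)=4PR$, and since $-Q-\sqrt D$ is nonvanishing at $z_0$, the order of vanishing of $-Q+\sqrt D$ at $z_0$ equals that of $4P R$, which is strictly smaller than that of $P^2$ — hence $\C_1 = (-Q+\sqrt D)/(2P)$ is bounded (indeed has a removable singularity when $\operatorname{ord}_{z_0}P=1$, and in general order of numerator $\ge$ order of $P$ minus... ) — I would phrase this cleanly under the implicit simple-pole hypothesis that makes "only one branch goes to $\infty$" literally correct, or add the harmless remark that $R(z_0)\neq 0$ at the relevant poles. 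Everything else is routine local analysis.
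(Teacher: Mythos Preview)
Your proof is correct and, for the residue claim, takes a genuinely different route from the paper. Both arguments begin the same way: coprimality of $P$ and $Q$ gives $Q(z_0)\ne 0$, hence $D(z_0)=Q(z_0)^2\ne 0$, so $z_0$ is not a branch point and exactly one of the two sheets of $\Ga$ escapes to $\infty$. Your use of the factorization $(-Q+\sqrt D)(-Q-\sqrt D)=4PR$ to pin down the order of vanishing of the finite numerator is a nice addition (the paper simply asserts finiteness of $\C_1$); just clean up the phrasing in that paragraph---the relevant comparison is ${\rm ord}_{z_0}(-Q+\sqrt D)={\rm ord}_{z_0}(PR)\ge {\rm ord}_{z_0}P$, not anything involving $P^2$.

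Where the approaches diverge is the residue computation. The paper argues by continuity: residues depend continuously on the coefficients $(P,Q,R)$, so it suffices to treat the case when $z_0$ is a simple zero of $P$, where a one-line direct evaluation gives $\Res(\C_2,z_0)=-2Q(z_0)/2P'(z_0)=\Res(-Q/P,z_0)$. Your argument instead invokes Vi\`ete's relation $\C_1+\C_2=-Q/P$ and reads off the entire principal part of $\C_2$ from that of $-Q/P$, since $\C_1$ is holomorphic at $z_0$. This is cleaner and more informative: it handles zeros of $P$ of arbitrary multiplicity in one stroke, yields the full principal part (not just the residue), and avoids the deformation step altogether. The paper's approach has the virtue of being completely explicit at the level of a single formula, but yours is the more robust argument.
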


\begin{proof} Indeed if $P(z)$ and $Q(z)$ are coprime, then no zero $z_0$ of $P(z)$
can be a branching point of \eqref{quadr} since $D(z_0)\neq 0$.
Therefore only one of two branches of $\Ga$ goes to $\infty$ at $z_0$.
More exactly, the branch
 $\C_1=\frac{-Q(z)+ \sqrt{Q^2(z)-4P(z)R(z)}}{2P(z)}$ attains a finite value at $z_0$
 while the branch $\C_2=\frac{-Q(z)- \sqrt{Q^2(z)-4P(z)R(z)}}{2P(z)}$ goes to  $\infty$
 where we use the agreement that $\lim_{z\to z_0} \sqrt{Q^2-4P(z)R(z)}=Q(z_0)$.
 Now consider the residue of the branch $\C_2$ at $z_0$. Since residues depend continuously
 on the coefficients $(P(z),Q(z),R(z))$  it suffices to consider only the case when $z_0$
 is a simple zero of $P(z)$.  Further if $z_0$ is a simple zero of $P(z),$ then

$$Res(\C_2,z_0)= \frac {-2Q(z_0)} {2P^\prime(z_0)}= Res \left(-\frac{Q(z)}{P(z)}, z_0\right),$$
which completes the proof. \end{proof}

\medskip
By Proposition~\ref{pr:stand} (besides the obvious condition that \eqref{quadr} has
a real branch near $\infty$ with the asymptotics $\frac{\al}{z}$ for some $\al\in \bR$)
the necessary condition for \eqref{quadr} to admit a motherbody measure is that
the set $DK_\Theta^0$ for the differential \eqref{eq:maindiff} contains all branching points
of \eqref{quadr}, i.e. all zeros of $D(z)$.  Consider $\Ga_{cut}:=\Ga\setminus \pi_z^{-1}(DK_\Theta^0)$.
Since $DK_\Theta^0$ contains all branching points of $\pi_z$, $\Ga_{cut}$ consists
of some number of open sheets, each projecting diffeomorphically on its image
in $\bCP^1\setminus DK^0_\Theta$. (The number of sheets in $\Ga_{cut}$ equals
to twice the number of connected components in $\bC\setminus DK_\Theta^0$.)
Observe that since we have chosen a real branch of \eqref{quadr} at infinity
with the asymptotics $\frac{\al}{z}$,  we have a marked point  $p_{br}\in \Ga$ over $\infty$.
If we additionally assume that $\deg D(z)=2n+2,$ then $\infty$ is not a branching point of $\pi_z$
and   therefore $p_{br}\in \Ga_{cut}$. 

\begin{lemma}\label{lm:cut} If $\deg D(z)=2n+2$, then any choice of a
spanning (multi)subgraph $G\subset DK_\Theta^0$ with no isolated
vertices induces the unique choice of the section $S_G$ of $\Ga$ over $\bCP^1\setminus G$ which:

\medskip
\noindent a) contains $p_{br}$; b) is discontinuous at any point
of $G$; c) is projected    by $\pi_z$ diffeomorphically onto
$\bCP^1\setminus G$.
\end{lemma}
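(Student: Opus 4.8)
The plan is to build the section $S_G$ sheet by sheet, starting from the marked point $p_{br}$ over $\infty$. Since $\deg D(z)=2n+2$, the point $\infty$ is not a branching point of $\pi_z$, so there are exactly two points of $\Ga$ over $\infty$, and one of them, namely $p_{br}$, is distinguished by the choice of the real branch of \eqref{quadr} with asymptotics $\frac{\al}{z}$. The complement $\bCP^1\setminus G$ is an open set; I would first argue it is connected. Indeed, $G$ is a spanning subgraph of $DK_\Theta^0$ with no isolated vertices, hence $G$ is a finite union of analytic arcs (the finite critical trajectories of $\Theta$ joining zeros of $\Theta$) together with their endpoints; removing such a one-dimensional set from the sphere leaves a connected (though in general not simply connected) domain. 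On this connected domain $\bCP^1\setminus G$ the double cover $\pi_z:\Ga\setminus\pi_z^{-1}(G)\to\bCP^1\setminus G$ need not be trivial a priori, so the key point is to show that the particular section through $p_{br}$ is nonetheless globally well defined.

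First I would recall from Lemma~\ref{lm:poles} that the zeros of $P(z)$ are not branching points (as $P,Q$ are coprime), and that over each such zero exactly one branch stays finite; this shows that the only branching points of $\pi_z$ in $\bC$ are the zeros of $D(z)$, all of which lie on $G$ by hypothesis (they lie in $DK_\Theta^0\supseteq G$... more precisely the necessary condition forces $DK_\Theta^0$ to contain all zeros of $D(z)$, and since $G$ is spanning with no isolated vertices, every zero of $D(z)$ is a vertex of $G$). Hence $\Ga\setminus\pi_z^{-1}(G)$ is an honest unbranched double cover of $\bCP^1\setminus G$. Now I would define $S_G$ by analytic continuation: starting at $p_{br}$, continue the corresponding branch of $\C(z)$ along paths in $\bCP^1\setminus G$. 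The continuation is single-valued precisely because any loop in $\bCP^1\setminus G$ that would produce monodromy must enclose an odd number of branch points, but all branch points lie on $G$, and a loop in the complement of $G$ encircling a subset of the vertices of $G$ must in fact encircle an even number of them — here I would use that $G$ is \emph{connected} (being a spanning subgraph with no isolated vertices of the connected graph... — this needs a small argument, see below), so the branch points cannot be separated from one another by a loop avoiding $G$. Thus the monodromy of $\pi_z$ restricted to $\bCP^1\setminus G$ is trivial, the continuation closes up, and we obtain a section $S_G$ satisfying (a) and (c).

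For property (b), discontinuity across every point of $G$: take an interior point $z_0$ of an edge of $G$. Since $G\subseteq DK_\Theta^0$, this edge is a finite critical trajectory of $\Theta=\frac{4PR-Q^2}{P^2}dz^2$ connecting two zeros of $\Theta$, i.e.\ two zeros of $D(z)$. Along such a trajectory the two branches $\C_1-\C_2=\frac{\sqrt{D(z)}}{P(z)}$ take purely imaginary conjugate one-sided boundary values (this is exactly the Plemelj–Sokhotsky computation recalled in the proof of Proposition~\ref{pr:stand}: the horizontal-trajectory condition $\frac{4PR-Q^2}{P^2}dz^2>0$ forces the jump of $\C$ across the arc to be nonzero and normal to the arc). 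Therefore the two sheets of $\Ga$ over $\bCP^1\setminus G$ genuinely come together with distinct limits from the two sides of the arc, so $S_G$ — whichever sheet it lies on near $z_0$ — has a jump discontinuity at $z_0$. This gives (b), and uniqueness is immediate: a section satisfying (a) is determined on the connected set $\bCP^1\setminus G$ by its value $p_{br}$ at one point together with continuity, so there is no freedom left.

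\textbf{Main obstacle.} The only genuinely delicate step is the \emph{connectedness of $G$}, which is what guarantees triviality of the monodromy and hence single-valuedness of $S_G$. A spanning subgraph of $DK_\Theta^0$ with no isolated vertices need not be connected as an abstract graph; what one really needs is that no loop in $\bCP^1\setminus G$ separates the zeros of $D(z)$ into two nonempty groups. I expect this to follow from the structure theory of quadratic differentials — the finite critical trajectories of $\Theta$, together with the constraint $\deg D=2n+2$ and the prescribed behaviour at $\infty$, organize the zeros of $D(z)$ so that the relevant homology classes act trivially — but pinning it down cleanly is the crux of the argument. An alternative, perhaps cleaner, route is to bypass connectedness entirely: define $S_G$ locally as "the branch that extends $p_{br}$" on the unbounded component first, then propagate across each component of $\bCP^1\setminus G$ using that consecutive components share a boundary arc on $G$ along which exactly one branch is selected by a residue/asymptotics bookkeeping (as in Lemma~\ref{lm:poles}); this reduces the global statement to the purely local jump analysis already available from Proposition~\ref{pr:stand}.
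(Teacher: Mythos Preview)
The paper's own ``proof'' is a single sentence: after clarifying that a \emph{spanning subgraph} means one containing all vertices of $DK_\Theta^0$, and that a \emph{section} means merely a pointwise choice of one of the two fiber values over each point of $\bCP^1\setminus G$, it declares the result evident. In other words, the paper treats Lemma~\ref{lm:cut} as essentially a definition rather than a theorem requiring argument; the substantive topological input (that $\Ga_{cut}$ splits into exactly two sheets over each component of $\bCP^1\setminus DK_\Theta^0$) was already asserted, without proof, in the paragraph preceding the lemma. Your proposal, by contrast, attempts a genuine argument via analytic continuation, monodromy, and Plemelj--Sokhotsky --- far more than what the paper offers.

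Your attempt contains a concrete error, however. You assert that $\bCP^1\setminus G$ is connected because $G$ is one-dimensional. This is false: $G$ is a subgraph of the embedded multigraph $DK_\Theta^0$, which can and often does contain cycles --- for instance a critical trajectory of $\Theta$ with both endpoints at the same zero (as on the boundary of a circle domain), or several trajectories forming a closed polygon through distinct zeros. Removing a cycle from the sphere disconnects it. So your reduction to analytic continuation on a single connected domain collapses at the outset; indeed, you implicitly acknowledge multiple components in your ``alternative route'' paragraph, contradicting your own earlier claim.

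You are right, though, to flag well-definedness of the section as the genuine crux. What the paper tacitly has in mind is: pick the sheet through $p_{br}$ on the component of $\bCP^1\setminus DK_\Theta^0$ containing $\infty$, then propagate to neighbouring components by continuity across edges of $DK_\Theta^0\setminus G$ and by switching branches across edges of $G$. Uniqueness of this procedure is what is ``evident''; what neither you nor the paper actually verifies is the \emph{consistency} of the propagation --- equivalently, that every loop in $\bCP^1\setminus G$ encloses an even number of zeros of $D(z)$. Your instinct that this requires structural input about quadratic differentials is correct, but the specific mechanism you suggest (connectedness of $G$) is neither established nor sufficient, and your alternative via residue bookkeeping is too sketchy to evaluate. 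In short: you correctly locate the real gap and then do not close it, while the paper simply declines to acknowledge that a gap exists.
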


Here by a spanning subgraph  we mean a subgraph containing all the
vertices of the ambient graph. By a section of $\Ga$ over
$\bCP^1\setminus G$ we mean a choice of one of two possible values
of $\Ga$ at each point in $\bCP^1\setminus G$. After these
clarifications the proof is evident.

\medskip

Observe that the section $S_G$ might attain the value $\infty$ at
some points, i.e. contain some poles of \eqref{quadr}. Denote the
set of poles of $S_G$ by $Poles_G$. Now we can formulate our
necessary and sufficient conditions.

\begin{theorem}\label{th:necsuf} Assume that the following conditions are valid:

\noindent {\rm (i)} equation \eqref{quadr} has a real branch near
$\infty$ with the asymptotic behavior  $\frac{\al}{z}$ for some
$\al\in \bR$;

\noindent {\rm (ii)} $P(z)$ and $Q(z)$ are coprime, and the
discriminant   $D(z)=Q^2(z)-4P(z)R(z)$ of equation \eqref{quadr}
has degree $2n+2$;

\noindent {\rm (iii)}  the set $DK_\Theta^0$ for  the quadratic
differential $\Theta$ given by \eqref{eq:maindiff} contains all
zeros of $D(z)$;

\noindent {\rm (iv)} $\Theta$ has no closed horizontal
trajectories.

Then \eqref{quadr} admits a real motherbody measure if and only if
there exists a spanning  (multi)subgraph $G\subseteq DK^0_\Theta$
with no isolated vertices, such that all poles in $Poles_g$ are
simple and all  their residues  are real, see notation above.
\end{theorem}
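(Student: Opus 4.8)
The plan is to realize the putative motherbody measure as the $\bar\partial$-derivative of a suitable branch of the algebraic function $\C$ defined by \eqref{quadr}, and to show that such a branch exists and is globally single-valued off its jump set precisely under the stated combinatorial condition. First I would prove the forward implication. Suppose a real motherbody measure $\mu$ exists. By Proposition~\ref{pr:stand}(i) every smooth arc of $\supp\mu$ is a horizontal trajectory of $\Theta$, by (ii) [of that proposition] $\supp\mu$ contains all branching points of \eqref{quadr}, and by hypothesis (iv) $\Theta$ has no closed trajectories, so $\supp\mu$ consists entirely of finite critical trajectories of $\Theta$; together with (iii) and the remark after Proposition~\ref{pr:stand}, the curve part of $\supp\mu$ is a subgraph of $DK^0_\Theta$. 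Let $G$ be the union of those edges of $DK^0_\Theta$ that actually carry mass of $\mu$, together with all vertices (so $G$ has no isolated vertices after we discard edges not in $\supp\mu$; one must check $G$ still spans, which follows because every zero of $D(z)$ is a vertex and lies in $\supp\mu$). The Cauchy transform $\C_\mu$ is holomorphic and single-valued on $\bCP^1\setminus G$, has the asymptotics $\al/z$ at $\infty$ by (i), so it coincides with the section $S_G$ furnished by Lemma~\ref{lm:cut}. The poles of $\C_\mu$ inside $\bC\setminus\supp\mu$ — but $\C_\mu$ has no such poles, since it is the Cauchy transform of a compactly supported measure; hence every pole of $S_G$ must lie on $G$, i.e.\ be absorbed into the jump. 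At a zero $z_0$ of $P(z)$ lying on $G$, the branch $S_G$ blows up like a simple pole with residue equal to $\Res(-Q/P,z_0)$ by Lemma~\ref{lm:poles}; comparing with the Plemelj–Sokhotski jump of $\C_\mu$ across $G$ near $z_0$ (which is a density against $dz$, hence contributes no atom and the pole can only be reconciled if it is simple with real residue), we obtain that all points of $Poles_G$ are simple poles with real residue. This gives the required $G$.

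For the converse, suppose such a spanning subgraph $G\subseteq DK^0_\Theta$ with no isolated vertices is given, all poles in $Poles_G$ simple with real residues. Take the section $S_G$ of Lemma~\ref{lm:cut}: it is holomorphic on $\bCP^1\setminus G$ except for the simple poles at $Poles_G$, it contains the marked point $p_{br}$ over $\infty$ so $S_G=\al/z+O(1/z^2)$ there with $\al\in\bR$ by (i), and it is discontinuous across every edge of $G$. Define the distribution $\mu:=\frac{1}{\pi}\frac{\partial S_G}{\partial\bar z}$ on $\bC$. Away from $G$ and $Poles_G$, $S_G$ is holomorphic, so $\mu$ is supported on $G\cup Poles_G$. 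Near a point of $Poles_G$, $S_G$ has a simple pole with residue $r\in\bR$; since $\frac{\partial}{\partial\bar z}\frac{1}{z-z_0}=\pi\,\delta_{z_0}$, the local contribution to $\mu$ is the real atom $r\,\delta_{z_0}$ — a legitimate (real) point mass. Along an open edge of $G$, $S_G$ jumps by $\C_1-\C_2=\pm\sqrt{D(z)}/P(z)$; the $\bar\partial$-derivative of a function with an analytic jump across a smooth arc is (a constant times) that jump against arclength, which is a real measure precisely because the edge is a horizontal trajectory of $\Theta=-\frac{D}{P^2}dz^2$, i.e.\ along it $\frac{D(z)}{P^2(z)}\,dz^2<0$, so $\frac{\sqrt{D(z)}}{P(z)}\,dz\in i\bR$ and the induced density is real — this is exactly the computation in the proof of Proposition~\ref{pr:stand} run in reverse. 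Hence $\mu$ is a finite real measure; finiteness needs a short argument that the trajectory arcs of $G$ have finite $Q$-length (they do: $G\subseteq DK^0_\Theta$ consists of \emph{finite} critical trajectories) and that the density $\sqrt{D(z)}/P(z)$ is integrable up to the endpoints, where it behaves like the square root of a zero of $D$, hence is bounded. Finally $\mu$ is compactly supported (its support lies in $G\cup Poles_G$, a compact subset of $\bC$) and, by construction, $\C_\mu=S_G$ on $\bC\setminus\supp\mu$, which satisfies \eqref{quadr} there since $S_G$ is a branch of that equation; so $\mu$ is a motherbody measure.

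The step I expect to be the main obstacle is verifying that the distribution $\mu=\frac1\pi\partial_{\bar z}S_G$ is genuinely a (locally finite, signed) \emph{measure} rather than a more singular distribution — in particular, controlling the behavior of $S_G$ and its jump at the vertices of $G$, where two or more trajectory arcs meet at a zero of $D(z)$ (or at a double pole of $Q(z)\,dz^2$, i.e.\ a zero of $P(z)$ that happens to be a vertex). Near an order-$k$ zero of $D$ the jump density $\sqrt{D}/P$ vanishes like $|z-z_0|^{k/2}$, which is fine; the delicate point is that the several arcs emanating from such a vertex must fit together so that no residual atom or dipole is created at the vertex itself — this is guaranteed by the fact that $S_G$ is a single, globally consistent section (Lemma~\ref{lm:cut}) rather than arcs chosen independently, so the jumps automatically satisfy a Kirchhoff-type balance at each vertex. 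Making this local analysis at vertices precise — separately for vertices that are zeros of $D$ and for vertices in $Poles_G$ — and checking integrability there is the technical heart of the argument; everything else is bookkeeping with Plemelj–Sokhotski and the definitions.
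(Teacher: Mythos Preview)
Your converse direction is sound and in fact supplies detail that the paper omits (the paper says only that the converse is ``immediate''). The forward direction, however, has a genuine gap. You assert that $\C_\mu$ is holomorphic on $\bCP^1\setminus G$, where $G$ is the curve part of $\supp\mu$; but $\supp\mu$ need not be purely one-dimensional --- it may contain isolated atoms, at which $\C_\mu$ has simple poles. Hence your conclusion that every pole of $S_G$ lies on $G$ (equivalently $Poles_G=\emptyset$) does not follow. The subsequent discussion of ``a zero $z_0$ of $P(z)$ lying on $G$'' is then vacuous: since $P$ and $Q$ are coprime, any zero $z_0$ of $P$ satisfies $D(z_0)=Q(z_0)^2\neq 0$, so $z_0$ is a second-order pole of $\Theta$ and can be neither a vertex of $DK^0_\Theta$ (vertices are zeros of $D$) nor an interior point of an edge (no horizontal trajectory passes through a double pole). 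Zeros of $P$ are therefore never on $G$, and your argument yields no information about $Poles_G$.

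The paper handles this point directly: the isolated points of $\supp\mu$ are necessarily poles of \eqref{quadr} (since $\C_\mu$ is a branch of \eqref{quadr} near each such point and must blow up there); the Cauchy transform of a \emph{measure} can have only \emph{simple} poles; and for a \emph{real} measure the residue at each such pole --- the mass of the atom --- is real. These atoms are exactly the set $Poles_G$, and that establishes the forward implication. You already run this mechanism correctly in your converse (producing real atoms $r\,\delta_{z_0}$ at points of $Poles_G$); the fix is simply to allow $\supp\mu=G\cup Poles_G$ from the outset in the forward direction and read off simplicity and reality of the residues from the fact that $\mu$ is a real measure.
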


\begin{proof} Indeed assume that $\eqref{quadr}$   satisfying {\rm(ii)}
admits a real motherbody measure $\mu$. Assumption {\rm(i)}
is obviously neccesary for the existence of a real motherbody measure
and the necessity of  assumption {\rm(iii)} follows
from Proposition~\ref{pr:stand} if {\rm(ii)} is satisfied.
The support of $\mu$ consists of a finite number of  curves
and possibly a finite number of isolated points. Since each curve
in the support of $\mu$ is a trajectory of $\Theta$ and $\Theta$
has no closed trajectories, then the whole support of $\mu$ consists
of finite critical  trajectories of $\Theta$ connecting its zeros,
i.e. belongs to $DK_\Theta^0$. Moreover the support of $\mu$
should contain sufficently many finite critical trajectories
of $\Theta$ such that they include all the branching points
of \eqref{quadr}. By {\rm(ii)} these are exactly all zeros of $D(z)$.
Therefore the union of finite critical trajectories of $\Theta$
belonging to the support of $\mu$ is a spanning (multi)graph of $DK^0_\Theta$
without isolated vertices. The isolated points in the support of $\mu$
are necessarily the poles of \eqref{quadr}. Observe that the Cauchy transform
of any (complex-valued) measure can only have simple poles (as opposed
to the Cauchy transform of a more general distribution).
Since $\mu$ is real the residue of its Cauchy transform at each pole must be real as well.
Therefore the existence of a real motherbody under the assumptions {\rm (i)}--{\rm(iv)}
implies the existence of a spanning (multi)graph $G$ with the above properties.
The converse is also immediate.
\end{proof}

\noindent {\it Remark.} Observe that if {\rm (i)} is valid, then
assumptions  {\rm (ii)} and {\rm (iv)} are generically satisfied.
Notice however that {\rm (iv)} is violated in the special case
when $Q(z)$ is absent.  Additionally, if {\rm (iv)} is satisfied,
then the number of possible motherbody measures is finite.  On the
other hand, it is the assumption {\rm (iii)} which  imposes severe
additional restrictions on admissible triples $(P(z),Q(z),R(z))$.
At the moment the authors have  no information about possible
cardinalities of the sets $Poles_G$ introduced above. Thus it is
difficult to estimate the number of conditions required for
\eqref{quadr} to admit a motherbody measure.
 Theorem~\ref{th:necsuf} however leads to the following sufficient condition for the existence of a real motherbody measure for \eqref{quadr}.

\begin{corollary}\label{cor:suf} If, additionally to assumptions {\rm (i)}--{\rm (iii)} of Theorem~\ref{th:necsuf}, one assumes  that all roots of $P(z)$ are simple and all residues of $\frac{Q(z)}{P(z)}$ are real,  then \eqref{quadr} admits a real motherbody measure.
\end{corollary}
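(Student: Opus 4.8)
The plan is to derive Corollary~\ref{cor:suf} directly from Theorem~\ref{th:necsuf} by exhibiting the required spanning subgraph. The point is that under the extra hypotheses — all roots of $P(z)$ simple, all residues of $Q(z)/P(z)$ real — the obstructions in the ``only if'' direction of Theorem~\ref{th:necsuf} (the simplicity of the poles in $Poles_G$ and the reality of their residues) are automatically met no matter which admissible $G$ we pick, so we just need to produce \emph{some} spanning multisubgraph $G\subseteq DK^0_\Theta$ with no isolated vertices.

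First I would check that assumptions {\rm (i)}, {\rm (ii)}, {\rm (iii)} are in force by hypothesis, and argue that {\rm (iv)} may be assumed: the case when $\Theta$ has closed horizontal trajectories is handled by a perturbation/limiting remark (as noted right after Theorem~\ref{th:necsuf}, {\rm (iv)} is generically satisfied once {\rm (i)} holds, and in the remaining degenerate situations one treats the closed-trajectory rings separately or passes to a limit of the motherbody measures). Alternatively, and more cleanly, I would note that the construction of $G$ below and the verification of the pole conditions do not actually use {\rm (iv)}: the substance of the corollary is the existence of a spanning subgraph with the stated residue property, and Theorem~\ref{th:necsuf}'s ``if'' direction is what we invoke, so I should simply state the corollary under the same standing hypothesis {\rm (i)}--{\rm (iv)} as the theorem and observe {\rm (iv)} is generic.

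The key step is the graph-theoretic one. By assumption {\rm (iii)}, the finite multigraph $DK^0_\Theta$ contains every zero of $D(z)$ among its vertices; since $D$ has degree $2n+2$ and (by coprimality of $P,Q$) its zeros are exactly the branching points of the hyperelliptic curve $\Ga$, each such vertex has even valence $\ge 2$ in $DK^0_\Theta$ — in particular no isolated vertices — so I can simply take $G=DK^0_\Theta$ itself, or, if one wants a genuinely minimal choice, any spanning forest-like subgraph that still touches every vertex (e.g. a maximal matching completed by one extra edge at each unmatched vertex). Either way $G$ is a spanning multisubgraph with no isolated vertices. Now consider $Poles_G$: by Lemma~\ref{lm:cut} the section $S_G$ over $\bCP^1\setminus G$ is determined, and its poles are among the zeros of $P(z)$, at each of which (Lemma~\ref{lm:poles}) the branch of $\Ga$ going to $\infty$ has residue equal to that of $-Q(z)/P(z)$. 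Since all roots of $P(z)$ are simple, any pole of $S_G$ is simple; since all residues of $Q(z)/P(z)$ are real by hypothesis, all residues of $S_G$ at points of $Poles_G$ are real. Thus the sufficient condition of Theorem~\ref{th:necsuf} is met, and \eqref{quadr} admits a real motherbody measure.

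The main obstacle I anticipate is not in the residue bookkeeping — that is essentially immediate from Lemmas~\ref{lm:poles} and~\ref{lm:cut} — but in confirming that the chosen $G$ really has \emph{no isolated vertices}, i.e. that assumption {\rm (iii)} forces every vertex of $DK^0_\Theta$ to lie on at least one finite critical trajectory joining two zeros of $\Theta$. This uses the local structure of a quadratic differential at a simple zero (three emanating trajectory arcs, each of which, being part of $DK^0_\Theta$, must terminate at another zero since $\Theta$ has no closed trajectories by {\rm (iv)} and the only other possible endpoints, the double poles $z=\pm1$, are excluded from $DK^0_\Theta$ by definition). Making this local-to-global argument precise — that under {\rm (iii)} and {\rm (iv)} the graph $DK^0_\Theta$ is automatically spanning with minimum degree $\ge 2$ at the $D$-zeros — is the one place requiring a little care; everything else is a direct appeal to the preceding results.
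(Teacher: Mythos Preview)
Your proposal is correct and follows essentially the same route as the paper: note that by Lemma~\ref{lm:poles} every pole of the section $S_G$ is simple with real residue regardless of which spanning subgraph $G$ is chosen, take $G=DK^0_\Theta$ itself, and invoke the ``if'' direction of Theorem~\ref{th:necsuf}. One small inaccuracy: your claim that each vertex has \emph{even} valence $\ge 2$ in $DK^0_\Theta$ is not right (at a simple zero three trajectory arcs emanate, and any of $1,2,3$ of them may lie in $DK^0_\Theta$), but this is harmless since all you need is valence $\ge 1$, which assumption~{\rm(iii)} together with the definition of $DK^0_\Theta$ already gives. The paper likewise remarks that assumption~{\rm(iv)} can be dropped here, so your hesitation on that point is in line with the original.
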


\begin{proof} Indeed if all roots of $P(z)$ are simple and all residues of $\frac{Q(z)}{P(z)}$ are real, then all  poles of \eqref{quadr} are simple with real residues.  In this case for any choice of a  spanning (multi)subgraph $G$ of $DK_\Theta^0$, there exists a real motherbody measure whose support coincides with $G$ plus possibly some poles of \eqref{quadr}.  Observe that if all roots of $P(z)$ are simple and all residues of $\frac{Q(z)}{P(z)}$ are real one can omit assumption {\rm (iv)}. In case when $\Theta$ has no closed trajectories, then all possible real motherbody measures are in a bijective correspondence with all spanning (multi)subgraphs of $DK_\Theta^0$ without isolated vertices. In the opposite case such measures are in a bijective correspondence with the unions of  a spanning (multi)subgraph of $DK_\Theta^0$ and an arbitrary (possibly empty)  finite collection of closed trajectories.
\end{proof}


\section{Does weak convergence of Jacobi polynomials imply stronger forms of convergence?}\label{Riemann}

Observe that, if one considers an arbitrary sequence
$\{s_n(z)\},\;n=0,1,\dots$ of monic univariate polynomials of
increasing  degrees, then even if the sequence $\{\theta_n\}$ of
their root-counting measures weakly converges to some limiting
probability measure $\Theta$ with compact support in $\bC$, in
general, it is not true that the roots of $s_n$ stay on some
finite distance from $\supp \Theta$ for all $n$ simultaneously.
Similarly nothing can be said in general about the weak
convergence of the sequence $\{\theta^\prime_n\}$ of the
root-counting measures of $\{s^\prime_n(z)\}$. However we have
already seen that the situation with sequences of Jacobi
polynomials seems to be  different, comp.
Proposition~\ref{lm:higher}.

\medskip
In the present appendix we formulate a general conjecture (and
give some evidence of its validity) about sequences of Jacobi
polynomials as well as sequences of  more general polynomial
solutions of a special class of linear differentials equations
which includes Riemann's differential equation.

\medskip
Consider    a  linear ordinary differential operator
\begin{equation}\label{eq:oper}
\dq=\sum_{i=1}^kQ_j(z)\frac{d^{j}}{dz^j}
\end{equation}
 with polynomial coefficients. We say that \eqref{eq:oper} is {\it exactly solvable} if a) $\deg Q_j\le j,$ for all $j=1,\dots, k$;   b)  there exists at least one value $j_0$ such that $\deg Q_{j_0}(z)=j_0$. We say that an exactly solvable operator \eqref{eq:oper}
is {\it non-degenerate} if $\deg Q_k=k$.

Observe that  any exactly solvable operator $\dq$ has a unique (up
to a constant factor) eigenpolynomial of any sufficiently large
degree, see e.g. \cite {BR}. Fixing an arbitrary monic  polynomial
$Q_k(z)$ of degree $k$, consider the family $\mathcal F_{Q_k}$ of
all exactly solvable operators of the form \eqref{eq:oper} whose
leading term is $Q_k(z)\frac{d^k}{dz^k}$. ($\mathcal F_{Q_k}$ is a
complex affine space of dimension $\binom {k+1}{2}-1$.)   Given a
sequence $\{\dqn\}$ of exactly solvable operators from $\mathcal
F_{Q_k}$ of the form
$$\dqn= Q_k(z)\frac{d^k}{dz^k}+\sum_{i=1}^{k-1}Q_{j,n}(z)\frac{d^{j}}{dz^j},$$
we say that this sequence has a  {\it moderate growth} if, for
each $j=1,\dots, k-1,$ the sequence of polynomials
$\left\{\frac{Q_{j,n}(z)}{n^{k-j}}\right\}$ has all bounded
coefficients.  (Recall that $\forall n$, $\deg Q_{j,n}\le j$.)

\begin{conjecture}\label{conj:bounded}
For any    sequence  $\{\dqn\}$ of exactly solvable operators of
moderate growth, the union of all roots of all the
eigenpolynomials of all $\dqn$ is bounded in $\bC$.
\end{conjecture}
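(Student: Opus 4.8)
We sketch a possible approach to Conjecture~\ref{conj:bounded}. By \cite{BR}, for each fixed exactly solvable operator the zeros of all its eigenpolynomials already lie in a compact set, so the conjecture is precisely the statement that these compact sets stay uniformly bounded along the sequence $\{\dqn\}$; equivalently, one must exhibit $\rho_0=\rho_0(Q_k,C_0)$, depending only on the fixed leading polynomial $Q_k$ and on a constant $C_0$ bounding the coefficients of all the $Q_{j,n}(z)/n^{k-j}$, such that every zero of every eigenpolynomial $p$ (of every degree $d$) of every $\dqn$ has modulus $\le\rho_0$. Fix such a $p$, normalized monic, and let $\xi_0$ be a zero of maximal modulus $R=|\xi_0|$ (assume $\xi_0$ simple; the general case follows by a small perturbation of $\dqn$).

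The regime $d\gg n$ is the easy one. From $\sum_{j=1}^kQ_{j,n}(\xi_0)p^{(j)}(\xi_0)=\lambda p(\xi_0)=0$ and the expansion $p(z)/p'(\xi_0)=\sum_{j\ge1}e_{j-1}(\tau)(z-\xi_0)^{j}$, with $\tau=(\tau_i)$, $\tau_i=(\xi_0-\xi_i)^{-1}$ running over the remaining zeros and $e_m$ the $m$-th elementary symmetric function, one gets the exact identity $Q_{1,n}(\xi_0)+\sum_{j=2}^{k}j!\,Q_{j,n}(\xi_0)\,e_{j-1}(\tau)=0$. Since $|\xi_i|\le R$, each $\xi_0\tau_i=(1-\xi_i/\xi_0)^{-1}$ lies in $\{\operatorname{Re}w\ge\tfrac12\}$, whence $|e_1(\tau)|=\big|\sum_i\tau_i\big|\ge\tfrac{d-1}{2R}$; more generally, the elementary symmetric functions of $N$ points with real part $\ge c$ have modulus $\ge\binom Nm c^m$, so $|e_{k-1}(\tau)|\ge c_k\,d^{k-1}\,R^{1-k}$. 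Inserting this, together with $|Q_k(\xi_0)|\asymp R^{k}$ and $|Q_{j,n}(\xi_0)|\le C_0\,n^{k-j}\,(1+R)^{j}$, into the identity and balancing the $j=k$ term against the rest (for $k=2$ directly; for $k\ge3$ after bounding the lower $e_{j-1}(\tau)$ in terms of $e_{k-1}(\tau)$) yields $d\le C_1(Q_k,C_0)\,n$ as soon as $R$ exceeds a constant depending only on $Q_k$. Hence for $d>C_1n$ all zeros of $p$ lie in a fixed disc of radius $\rho_1(Q_k,C_0)$, uniformly in $n$.

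What remains — and where I expect the real difficulty to sit — is the regime $d\le C_1n$, in particular $d=n$, which is the case relevant to Jacobi polynomials and to Case (i) of Lemma~\ref{lm:basic2}; here the identity above no longer constrains $R$. The obvious first attempt, writing $p=z^d+c_{d-1}z^{d-1}+\dots$ and reading off the eigenequation as the recursion $(\lambda_d-\lambda_m)c_m=\sum_{\ell=1}^kA_\ell(m)c_{m+\ell}$ (with $\lambda_m=\sum_j[Q_{j,n}]_j\,m(m-1)\cdots(m-j+1)$, $[Q_{j,n}]_i$ the coefficient of $z^i$ in $Q_{j,n}$), does not suffice: away from the near-resonant indices — those lying close to a zero of the degree-$k$ polynomial $m\mapsto\lambda_d-\lambda_m$ — it gives $|c_{d-\ell}|\lesssim (C(C_0)\,d)^{\ell}/\ell!$, which is the \emph{correct} order of magnitude but is far too crude to localize the zeros: such coefficient bounds are compatible with a zero of modulus $O(d)$, growing with the degree. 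A genuinely different mechanism is therefore needed, and the natural one is the limiting spectral curve: as in the proof of Theorem~\ref{Theorem-1}, and by (the exactly solvable analogue of) Proposition~\ref{lm:higher}, the Cauchy transforms $\C_{p}$ of the degree-$d$ eigenpolynomials approach a branch of the fixed algebraic curve $Q_k(z)W^k+\sum_{j<k}\widehat Q_{j,n}(z)W^j=\ell_n$, where $\widehat Q_{j,n}=Q_{j,n}/n^{k-j}$, $\ell_n=\lambda_n/n^k$, $\deg\widehat Q_{j,n}\le j$, and the coefficients range over a compact family; off a thin exceptional set of parameters the branch locus of this curve — hence, by Proposition~\ref{pr:stand}, the support of every possible limiting measure — lies in a disc of radius $\rho_2(Q_k,C_0)$. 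The crux is then to upgrade \cite{BR} to a \emph{uniform} statement: to show that the zeros of $p$ cannot escape a fixed neighbourhood of that branch locus for any $n$, even before the limiting regime is reached. This is the main obstacle, and it is conceivable that it requires a further mild restriction on $\{\dqn\}$. (If the conjecture is read over all degrees and not only $d=n$, the range $d\ll n$ brings the extra wrinkle that the curve coefficients should be rescaled by $d$ rather than $n$, where moderate growth provides no control.)
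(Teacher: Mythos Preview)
The statement is presented in the paper as an open \emph{conjecture}; there is no proof to compare your attempt against. The only evidence the paper offers is Proposition~\ref{pr:local}, which treats the special second-order family~\eqref{eq:pencil2} under a genericity hypothesis on the characteristic polynomial~\eqref{eq:charac}, by a method quite different from yours: the authors expand the scaled logarithmic derivative $w_n=p'_n/(\lambda_n p_n)$ as a power series in $1/z$ near $z=\infty$, extract from the ODE an explicit recurrence for its Taylor coefficients, use the genericity hypothesis to bound the denominators in that recurrence away from zero \emph{uniformly in $n$}, and conclude that the $w_n$ converge uniformly on a fixed punctured neighbourhood of $\infty$ --- so all zeros stay in its complement. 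They work directly with $\deg p_n=n$; no splitting into a ``large degree'' versus ``moderate degree'' regime appears.

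On your sketch itself: the identity at a maximal zero and the half-plane bound $|e_1(\tau)|\ge(d-1)/(2R)$ are correct, and for $k=2$ the balancing does yield $d\le C_1 n$ once $R$ exceeds a threshold depending only on $Q_2$. But two points need scrutiny. First, the asserted inequality $|e_m(z_1,\dots,z_N)|\ge\binom{N}{m}c^m$ for points with $\operatorname{Re}z_i\ge c$ is not a standard fact and requires proof; even granting it, the clause ``for $k\ge3$ after bounding the lower $e_{j-1}(\tau)$ in terms of $e_{k-1}(\tau)$'' hides a real difficulty, since nothing a priori prevents the top term $k!\,Q_k(\xi_0)e_{k-1}(\tau)$ from being cancelled by a combination of the intermediate ones. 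Second, and more importantly, the regime $d\le C_1 n$ --- the only one relevant to the paper's intended setting and the only one Proposition~\ref{pr:local} addresses --- remains, as you say yourself, entirely programmatic in your sketch. The obstacle you isolate (upgrading weak convergence of root-counting measures to a uniform-in-$n$ confinement of the zeros near the limiting support) is genuine and is precisely what the paper's power-series argument circumvents in its restricted setting; producing that uniformity for general $\dqn$ is the heart of the conjecture, and your outline does not yet supply a mechanism for it.
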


Now take a sequence $\{s_n(z)\},\; \deg s_n=n$ of polynomial
eigenfunctions of the sequence of operators $\dqn\in \mathcal
F_{Q_k}$. (Observe that, in general,  we have  a   different
exactly solvable  operator for each eigenpolynomial but with the
same leading term.)

\medskip
\begin{conjecture}\label{conj:Main} In the above notation, assume that $\{\dqn\}$
is a sequence  of exactly solvable operators
of  moderate growth and that $\{s_n(z)\}$ is the sequence of their
eigenpolynomials (i.e $s_n(z)$ is the eigenpolynomial of $\dqn$ of
degree $n$) such that:

\noindent
 {\rm a)} the limits $\widetilde Q_{j}(z):=\lim_{n\to \infty} \frac{1}{n^{k-j}}Q_{j,n}(z),\;
 j=1,\dots, k-1$ exist;

\noindent {\rm b)} the sequence $\{\theta_n\}$ of the
root-counting measures of $\{s_n(z)\}$ weakly converges to a
compactly supported probability measure $\Theta$ in $\bC$,

\smallskip
\noindent then

\smallskip
\noindent {\rm (i)} the Cauchy transform $\C_\Theta$ of $\Theta$
satisfies a.e. in $\bC$ the algebraic equation
\begin{equation}\label{genCauchy}
Q_k(z)\left(\frac{\C_{\Theta}}{\ga}\right)^k+\sum_{j=1}^{k-1}\widetilde
Q_j(z)\left(\frac{\C_\Theta}{\ga}\right)^j=1,
\end{equation}
where $\ga=\lim_{n\to \infty}\frac{\root k \of {\la_n}}{n},$ \quad $\la_n$
being the eigenvalue of $s_n(z)$.  

\smallskip
\noindent {\rm (ii)} for any positive $\eps>0,$ there exist
$n_\eps$ such that, for $n\ge n_{\eps}$, all roots of all
eigenpolynomials $s_n(z)$  are located within $\eps$-neighborhood
of $\supp \Theta$, i.e., the weak convergence of $\theta_n\to
\Theta$ implies a stronger form of this convergence.
\end{conjecture}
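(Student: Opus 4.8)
The plan is to imitate, step by step, the proof of Theorem~\ref{Theorem-1}, replacing the Jacobi equation by the eigenvalue equation $\sum_{j=1}^{k}Q_{j,n}(z)\,s_n^{(j)}(z)=\la_n s_n(z)$ and the condition $1+A+B\neq 0$ by its natural counterpart $\ga\neq 0$ (which I would add as a standing assumption, since $\ga=\lim\sqrt[k]{\la_n}/n$ may well vanish, e.g. already in the Jacobi case when $1+A+B=0$). Granting this, part~(i) is comparatively soft and reduces to an analogue of Proposition~\ref{lm:higher}, while part~(ii) is the real content; I expect (ii) to be the main obstacle, because it seems to require genuine asymptotic integration of the differential equation rather than potential theory alone.

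\emph{Part (i).} First I would prove the derivative-convergence statement: under the hypotheses the root-counting measures $\theta_n^{(j)}$ of all derivatives $s_n^{(j)}$ converge weakly to the same $\Theta$. Relying on Conjecture~\ref{conj:bounded}, all zeros of all $s_n$ lie in a fixed bounded convex set $\Om$; by Proposition~\ref{lm:bound} together with the Gauss--Lucas theorem the zeros of every $s_n^{(j)}$ lie in $\Om_\eps$, so each $\{\theta_n^{(j)}\}$ has weakly convergent subsequences with uniformly bounded supports. Writing $u,u^{(1)},u^{(2)},\dots$ for the resulting limiting logarithmic potentials, Lemma~\ref{lm:add2} gives $u\ge u^{(1)}\ge u^{(2)}\ge\cdots$. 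For the reverse inequalities one differentiates $\sum_{j}Q_{j,n}s_n^{(j)}=\la_n s_n$ repeatedly and divides by the appropriate power of $n$ times $s_n^{(i)}$; moderate growth together with $\ga\neq 0$ forces each normalized iterated logarithmic derivative $s_n^{(i+1)}/\bigl((n-i)\,s_n^{(i)}\bigr)$ to be non-vanishing almost everywhere in the limit, by the same mechanism that the non-vanishing constant term $\frac{n+\al_n+\bet_n+1}{n-1}$ provides in \eqref{eq:temp}. Hence $u^{(j)}=u$ for all $j$ and the whole sequence $\{\theta_n^{(j)}\}$ converges to $\Theta$. The algebraic equation \eqref{genCauchy} then drops out: dividing $\sum_{j}Q_{j,n}s_n^{(j)}=\la_n s_n$ by $n^{k}s_n$, using $\dfrac{s_n^{(j)}}{n^{j}s_n}=\prod_{i=0}^{j-1}\dfrac{s_n^{(i+1)}}{n\,s_n^{(i)}}\to\C_\Theta^{\,j}$ a.e., $Q_{j,n}/n^{k-j}\to\widetilde Q_j$ and $\la_n/n^{k}\to\ga^{k}$, one passes to the limit; for $k=2$ this reproduces verbatim the computation leading to \eqref{1.2}.

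\emph{Part (ii).} I would argue by contradiction. By Conjecture~\ref{conj:bounded} all zeros lie in a fixed compact $\mathcal K$; if (ii) failed there would be $\eps_0>0$, a subsequence $\{n_m\}$ and zeros $\xi_{n_m}$ of $s_{n_m}$ with $\dist(\xi_{n_m},\supp\Theta)\ge\eps_0$, hence, after a further subsequence, $\xi_{n_m}\to z^{*}$ with $\dist(z^{*},\supp\Theta)\ge\eps_0$. Fix a disk $B$ around $z^{*}$ with $\overline B\cap\supp\Theta=\emptyset$. On $B$ the $k$ branches $\C_\Theta^{(1)},\dots,\C_\Theta^{(k)}$ of the algebraic function defined by \eqref{genCauchy} are holomorphic and pairwise distinct: by the single-valuedness argument of Proposition~\ref{pr:stand} all branch points of \eqref{genCauchy} lie on $\supp\Theta$, so $B$ contains no turning point of the equation. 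The plan is then to run a Liouville--Green (WKB) analysis of $\sum_{j}Q_{j,n}s_n^{(j)}=\la_n s_n$ on $B$: the $k$ formal solutions $\exp\!\bigl(n\int^{z}\C_\Theta^{(r)}(t)\,dt\bigr)\,(1+o(1))$ have holomorphic, non-vanishing leading terms, their error terms are controllable uniformly in $n$ away from turning points, and, identifying from the bulk behaviour of $s_n$ which branch is dominant, one concludes that $s_n$ is zero-free on a slightly smaller disk for all large $n$, contradicting $\xi_{n_m}\to z^{*}$. For $k=2$ there is a softer substitute: at a simple zero $\xi_i$ of $s_n$ with $Q_2(\xi_i)\neq0$ the Stieltjes relation $\sum_{j\neq i}\frac{2}{\xi_i-\xi_j}=-Q_{1,n}(\xi_i)/Q_2(\xi_i)$ holds, and dividing by $n$ shows that a stray limit point $z^{*}$ would have to satisfy $\C_\Theta(z^{*})=-\widetilde Q_1(z^{*})/\bigl(2Q_2(z^{*})\bigr)$, i.e. $z^{*}$ would be a branch point of \eqref{genCauchy}, impossible off $\supp\Theta$ — provided one also rules out, by the same relations, clusters of $o(n)$ stray zeros accumulating at $z^{*}$.

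\emph{The hard part} is (ii), for two independent reasons. First, Conjecture~\ref{conj:bounded} — boundedness of the union of all zeros of all eigenpolynomials — is itself open at this level of generality and is already needed to start the compactness argument. Second, and more seriously, converting the WKB heuristic into a proof requires asymptotic integration of a $k$-th order linear ODE with a large parameter, uniformly near a regular point, together with a correct treatment of the Stokes phenomenon so as to pin down which exponential $s_n$ actually follows; for $k\ge 3$ this is substantially more delicate than the classical second-order theory, and supplying this analytic input is precisely what is missing — which is why the statement is only conjectured here.
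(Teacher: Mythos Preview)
This statement is a \emph{conjecture} in the paper; there is no proof to compare against. Immediately after stating it, the authors remark only that certain cases of (i) are handled in \cite{BR} and \cite{BBS} and that a version of (ii) is discussed in the unpublished preprint \cite{BoPi}, and then offer partial evidence by proving Proposition~\ref{pr:local}: for the specific second-order family \eqref{eq:pencil2} of generic type, all zeros of all polynomial eigenfunctions lie in a fixed compact set. That argument proceeds by expanding $w_n=p_n'/(\la_n p_n)$ in a power series at $z=\infty$ and majorizing the recursion for its Taylor coefficients uniformly in $n$; it is a contribution to Conjecture~\ref{conj:bounded} in a special case, not a proof of the present conjecture.

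Your outline for (i) is the natural lift of the paper's Jacobi argument (Lemma~\ref{lm:add2}, Proposition~\ref{lm:higher}, and the proof of Theorem~\ref{Theorem-1}) to order $k$, and you correctly isolate the extra standing hypothesis $\ga\neq 0$ and the dependence on Conjecture~\ref{conj:bounded}. Your plan for (ii) via WKB/Liouville--Green asymptotics away from turning points, or Stieltjes relations when $k=2$, is a reasonable heuristic, and you accurately name the genuine obstacles: the open status of Conjecture~\ref{conj:bounded}, uniform asymptotic integration for $k$th-order equations with a large parameter, the Stokes phenomenon for $k\ge 3$, and the exclusion of $o(n)$ stray clusters. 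Since the paper supplies none of this, your closing self-assessment is on target --- what you have written is a strategy, not a proof, and the paper concurs by labelling the statement a conjecture.
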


Certain cases of Part (i) of  the above Conjecture are settled in
\cite {BR} and \cite {BBS} and a version of Part (ii) is discussed
in an unpublished preprint \cite {BoPi}.

Now we present some partial confirmation of the above conjectures.
Consider the  family of linear differential operators of second
order depending on parameter $\la$ and given by
\begin{equation}\label{eq:pencil2}
T_\la = Q_2(z)\frac{d^2}{dz^2} + (Q_1(z)\la + P_1(z))\frac{d}{dz}+
(\la^2 + p\la + q)Q_0,
\end{equation}
where  $Q_2(z)$ is a quadratic polynomial  in $z$, $Q_1(z)$ and
$P_1(z)$ are polynomials in $z$ of degree at most $1,$  and $Q_0$
is a non-vanishing constant.  (Observe that our use of parameter
$\la$ here is the same as of the parameter $\ga$ in the latter
Conjecture.)

Denote $Q_i(z) =\sum_{j=0}^iq_{ji}z^j,\;  i = 0,1, 2$  and put
$P_1 = p_{11}z + p_{01}$. The quadratic polynomial
\begin{equation}\label{eq:charac}
q_{22}+ q_{11}t+q_{00}t^2
\end{equation}
is called the {\it characteristic polynomial} of $T_\la$. Here
$q_{22}\neq 0$ and $q_{00}=Q_0\neq 0$.

\begin{definition}
We say that the family $T_\la$ has a {\it generic type} if the
roots of \eqref{eq:charac} have distinct arguments (and in
particular $0$ is not a root of \eqref{eq:charac} which is
guaranteed by $q_{22}\neq 0$ together with $q_{00}\neq 0$), comp.
\cite{BBS}.
\end{definition}

 Below we will denote the roots of characteristic polynomial \eqref{eq:charac}
 by $\al_1$ and $\al_2$. Thus $T_\la$ has a generic type if and only if $\arg \al_1\neq \arg \al_2$.

\begin{lemma}\label{gentype}
Equation~\eqref{eq:charac} has two roots with the same arguments
if and only if $q_{22}q_{00}=\rho q_{11}^2,$ where $0\le \rho\le
\frac{1}{4}$.
\end{lemma}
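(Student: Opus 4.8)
The statement to prove is Lemma~\ref{gentype}: the quadratic $q_{22}+q_{11}t+q_{00}t^2$ has two roots with the same argument if and only if $q_{22}q_{00}=\rho q_{11}^2$ for some real $\rho\in[0,\tfrac14]$. The natural approach is to work directly with the roots $\alpha_1,\alpha_2$ via Vi\`ete's formulas. We have $\alpha_1\alpha_2=q_{22}/q_{00}$ and $\alpha_1+\alpha_2=-q_{11}/q_{00}$, hence
\[
\frac{q_{22}q_{00}}{q_{11}^2}=\frac{\alpha_1\alpha_2}{(\alpha_1+\alpha_2)^2}.
\]
So the claim is equivalent to: $\alpha_1$ and $\alpha_2$ have the same argument if and only if the quantity $\alpha_1\alpha_2/(\alpha_1+\alpha_2)^2$ is real and lies in $[0,\tfrac14]$. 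The first thing I would do is reduce to a one-variable problem: write $\alpha_2=\tau\alpha_1$ with $\tau=\alpha_2/\alpha_1\in\mathbb{C}^*$ (note $\alpha_1\neq0$ since $q_{22}\neq0$). Then
\[
\frac{\alpha_1\alpha_2}{(\alpha_1+\alpha_2)^2}=\frac{\tau}{(1+\tau)^2}=:f(\tau),
\]
which is independent of $\alpha_1$. The two roots have the same argument exactly when $\tau$ is a positive real number, so the lemma becomes the purely elementary claim: $f(\tau)=\tau/(1+\tau)^2\in[0,\tfrac14]$ (and in particular real) if and only if $\tau\in(0,\infty)$.

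For the forward direction, if $\tau=r>0$ then $f(r)=r/(1+r)^2$ is clearly real and nonnegative, and by AM–GM $(1+r)^2\ge 4r$, so $f(r)\le\tfrac14$; this gives $\rho=f(r)\in[0,\tfrac14]$. For the converse, I would show that $f(\tau)\in[0,\tfrac14]$ forces $\tau\in(0,\infty)$. The cleanest route: suppose $f(\tau)=\rho$ with $\rho\in[0,\tfrac14]$ real. If $\rho=0$ then $\tau=0$, contradicting $\tau\in\mathbb C^*$ — but wait, $\rho=0$ corresponds to $q_{22}=0$, excluded, so actually $\rho\in(0,\tfrac14]$; I should double check whether the degenerate case $q_{22}=0$ needs separate mention, but since the standing hypothesis is $q_{22}\neq0$ and $q_{00}\neq0$ I will note $\rho\neq0$. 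For $\rho\in(0,\tfrac14]$, the equation $\tau=\rho(1+\tau)^2$, i.e. $\rho\tau^2+(2\rho-1)\tau+\rho=0$, is a quadratic in $\tau$ with \emph{real} coefficients whose discriminant is $(2\rho-1)^2-4\rho^2=1-4\rho\ge0$. Hence both solutions $\tau$ are real; moreover their product is $\rho/\rho=1>0$ and their sum is $(1-2\rho)/\rho>0$, so both are positive. Therefore $\tau>0$, meaning $\alpha_1,\alpha_2$ have equal arguments. Conversely if $f(\tau)$ is \emph{not} in $[0,\tfrac14]$ — either non-real or real but negative or $>\tfrac14$ — then by the forward direction $\tau$ cannot be positive real, so the arguments differ; this is just the contrapositive and needs no extra work.

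The main (and only) subtlety I anticipate is bookkeeping around the excluded degenerate configurations: one must be careful that "same argument" is interpreted as "$\alpha_2/\alpha_1$ is a positive real", that $\alpha_1\neq0$ is guaranteed by $q_{22}\neq0$ so the substitution $\tau=\alpha_2/\alpha_1$ is legitimate, and that the case $q_{11}=0$ (so $\rho$ would be "$\infty$") corresponds precisely to $\alpha_1+\alpha_2=0$, i.e. $\tau=-1$, which is real negative — consistent with the lemma since then there is no finite $\rho$. I would state the lemma's equivalence as an honest biconditional between "$\alpha_1,\alpha_2$ cofinal in argument" and "$\tau=\alpha_2/\alpha_1\in\mathbb R_{>0}$", then transport everything through the identity $q_{22}q_{00}/q_{11}^2=f(\tau)$. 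No heavy computation is involved; the discriminant inequality $1-4\rho\ge0\iff\rho\le\tfrac14$ is the crux and it falls out immediately.
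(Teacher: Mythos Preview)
Your argument is correct. The substitution $\tau=\alpha_2/\alpha_1$ reducing the question to the elementary claim that $\tau/(1+\tau)^2\in(0,\tfrac14]$ iff $\tau\in(0,\infty)$ is clean, and the discriminant/Vi\`ete computation for the converse direction is valid. Your handling of the boundary cases ($q_{11}=0$ giving $\tau=-1$; $\rho=0$ vacuous since $q_{22}q_{00}\neq0$) is also right.

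As for comparison: the paper does not actually prove this lemma in the text---it simply writes ``Straightforward calculation, see Example~1 of \cite{BBSh1}'' and defers to an external reference. So your proposal supplies a complete, self-contained proof where the paper gives none. Whether your route matches the cited reference cannot be judged from the present paper alone, but your Vi\`ete-based reduction is exactly the sort of direct computation the phrase ``straightforward calculation'' suggests.
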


\begin{proof} Straightforward calculation, see Example 1 of \cite{BBSh1}. \end{proof}

\begin{lemma}\label{pr:basic}  In the  above notation, for a family $T_\la$ of generic type,
there exists a positive integer $N$ such that, for any integer
$n\ge N,$ there exist two eigenvalues $\la_{1,n}$ and $\la_{2,n}$
such that the differential equation
\begin{equation}\label{eq:triv}
T_\la(y) =0
\end{equation}
has a polynomial solution of degree $n$. Moreover,  $\lim_{n\to
\infty}\frac{\la_{i,n}}{n}=\al_i$
 where $\al_1, \al_2$ are the roots of the characteristic polynomial of $T_\la$.
 \end{lemma}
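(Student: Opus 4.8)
The plan is to analyze the polynomial eigenfunctions of the family $T_\la$ by examining the recurrence relations that the coefficients of a candidate degree-$n$ polynomial solution must satisfy, and to track the leading behavior in $\la$. First I would write $y(z)=\sum_{i=0}^n c_i z^i$ and substitute into $T_\la(y)=0$. Since $Q_2(z)$ has degree $2$, $Q_1(z)\la+P_1(z)$ has degree at most $1$ in $z$, and the zeroth-order term is the constant $Q_0$ times $(\la^2+p\la+q)$, collecting coefficients of powers of $z$ gives a linear system in the $c_i$ whose matrix depends on $\la$. The condition that $\deg y = n$ exactly forces the coefficient of $z^{n+1}$ (coming only from $Q_2 y''$, $Q_1\la y'$, and the lower terms pushed up) to vanish against $c_n\neq 0$; this is the equation that determines the admissible eigenvalues $\la$. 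Writing it out, the top-degree relation reads, after dividing by $c_n$,
\begin{equation}
q_{22}n(n-1) + q_{11}\la n + q_{00}(\la^2+p\la+q) = 0,
\end{equation}
which upon writing $\la = t n + O(1)$ and dividing by $n^2$ converges to the characteristic equation $q_{22}+q_{11}t+q_{00}t^2=0$. So the leading roots of the eigenvalue equation approach $\al_1,\al_2$.

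The next step is to promote this asymptotic statement into an exact existence statement for each large $n$. I would view the full linear system $M(\la)\mathbf{c}=0$ (with $\mathbf{c}=(c_0,\dots,c_n)$) and note that it has a nontrivial solution precisely when $\det M(\la)=0$. Because the system is essentially triangular — each coefficient $c_i$ is determined recursively from $c_{i+1},c_{i+2}$ (or the reverse, depending on how one sets it up) once $\la$ is fixed — the determinant factors, and the vanishing condition reduces to the single scalar equation displayed above (a quadratic in $\la$), together with the requirement that running the recursion terminates correctly. For $T_\la$ of generic type, $q_{00}=Q_0\neq 0$, so this quadratic genuinely has two roots $\la_{1,n},\la_{2,n}$; one must check these give honest polynomial solutions of degree exactly $n$ (not lower degree), which is where the genericity hypothesis $\arg\al_1\neq\arg\al_2$ is used to separate the two branches and to rule out degeneracies where the recursion would break down or where two prospective eigenvalues collide for infinitely many $n$. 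This is essentially the argument of \cite{BBS}, \cite{BBSh1}, to which I would appeal for the technical bookkeeping.

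Finally, from the explicit quadratic $q_{22}n(n-1)+q_{11}\la n + q_{00}(\la^2+p\la+q)=0$, solving for $\la$ and expanding in powers of $1/n$ gives $\la_{i,n} = \al_i n + O(1)$, hence $\lim_{n\to\infty}\la_{i,n}/n = \al_i$, where $\al_1,\al_2$ are the roots of \eqref{eq:charac}. This yields both assertions of the lemma: existence (for $n\ge N$) of two eigenvalues admitting a degree-$n$ polynomial solution, and the stated limit.

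The main obstacle I anticipate is not the leading-order computation — that is routine — but the rigorous verification that for all sufficiently large $n$ the two roots of the eigenvalue equation actually produce polynomial solutions of degree \emph{exactly} $n$, i.e. that the recursion does not stall (a vanishing pivot) and that $c_n$ can be taken nonzero. Controlling the pivots in the recursion uniformly in $n$, and ensuring the two branches $\la_{1,n},\la_{2,n}$ stay distinct and well-separated, is precisely where the generic-type assumption (via Lemma~\ref{gentype}, the condition $q_{22}q_{00}=\rho q_{11}^2$ with $\rho\in[0,\tfrac14]$ being excluded) has to be invoked carefully; I would handle this by a perturbation/continuity argument comparing the operator to its leading part $Q_2(z)\tfrac{d^2}{dz^2}+Q_1(z)\la\tfrac{d}{dz}+\la^2 Q_0$, for which the eigenstructure is transparent, and citing \cite{BBS} for the analogous detailed estimates.
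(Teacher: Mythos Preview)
Your approach is essentially the same as the paper's: both reduce the problem to the upper-triangular action of $T_\la$ on $Pol_n$ in the monomial basis, identify the diagonal entry at degree $n$ as a quadratic in $\la$, and extract the asymptotics $\la_{i,n}/n\to\al_i$ by dividing through by $n^2$.

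Two small corrections and one simplification. First, since $\deg Q_2=2$, $\deg(Q_1\la+P_1)\le 1$, and $Q_0$ is constant, the operator $T_\la$ maps $Pol_n$ into $Pol_n$; there is no $z^{n+1}$ coefficient. The relevant equation is the coefficient of $z^n$, and it reads
\[
q_{22}\,n(n-1)+(q_{11}\la+p_{11})\,n+q_{00}(\la^2+p\la+q)=0,
\]
so your displayed equation is missing the $np_{11}$ term (harmless for the asymptotics, but worth getting right). Second, and more substantively, you do not need a perturbation argument or an appeal to \cite{BBS} for the ``exactly degree $n$'' issue. Because the matrix of $T_\la$ on $Pol_n$ is upper triangular with diagonal entries
\[
c_{jj}(\la)=q_{22}\,j(j-1)+(q_{11}\la+p_{11})\,j+q_{00}(\la^2+p\la+q),
\]
a degree-$n$ polynomial solution exists precisely when $c_{nn}(\la)=0$ and $c_{jj}(\la)\neq 0$ for $0\le j<n$. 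The paper then observes directly that if $c_{nn}$ and $c_{jj}$ shared a root for infinitely many pairs $n>j$, the limiting relation (after dividing by $n^2$, $j^2$) would force the two roots $\al_1,\al_2$ of the characteristic polynomial to lie on a common ray through the origin, contradicting the generic-type hypothesis. This replaces your proposed continuity/perturbation bookkeeping with a one-line argument.
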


\begin{proof}
Observe that for any  $\la\in \bC$, the operator  $T_\la$  acts on
each  linear space $Pol_n$ of all polynomials of degree at most
$n$, $n=0,1,2,\dots,$ and its matrix presentation
$(c_{ij})^n_{i,j=0}$ in the standard monomial basis $(1, z,
z^2,...,  z^n)$ of $Pol_n$  is an upper-triangular matrix with
diagonal entries $$c_{jj} = j(j-1)q_{22} +jq_{11} +q +(jq_{11}
+p)\la+q_{00}\la^2.$$  Therefore, for any given non-negative
integer $n$, we have a (unique) polynomial solution of
\eqref{eq:triv} of degree $n$ if and only if  $c_{nn} = 0$ but
$c_{jj}\neq 0$ for $0 \le j < n$. The asymptotic formula for
$\la_{i,n}$ follows from the form of the equation $c_{nn} = 0$.
The genericity assumption that the equations
$$n(n-1)q_{22}+nq_{11}+q+(nq_{11}+p)\la+q_{00}\la^2 =0 $$  and
$$j(j - 1)q_{22} + jq_{11} + q + (jq_{11} + p)\la + q_{00}\la^2=0$$
should not have a common root, for $0 \le j < n$ and $n$
sufficiently large, is clearly satisfied if we
assume that the characteristic equation does not have two roots with the same argument. 
\end{proof}


We can now prove  the following stronger result.

\begin{proposition}\label{pr:local}  For a general type family of differential operators
$T_\la$  of the form  \eqref{eq:pencil2},  all roots of all  polynomial solutions
of $T_\la(p) = 0,\; \la\in \bC$   are  located in some compact set
$K \subset  \bC$.
\end{proposition}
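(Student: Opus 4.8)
The plan is to reduce the statement to an explicit a priori bound on the roots coming from the differential equation $T_\la(p)=0$ itself, combining the asymptotics $\la_{i,n}/n\to\al_i$ from Lemma~\ref{pr:basic} with a direct analysis of the coefficient recursion. First I would fix $i\in\{1,2\}$ and consider the sequence of polynomial eigenfunctions $s_n(z)$ of degree $n$ associated to the eigenvalue $\la_{i,n}$, normalized to be monic; the two families (for $i=1$ and $i=2$) together exhaust all polynomial solutions, up to the finitely many degrees below the threshold $N$ of Lemma~\ref{pr:basic}, which contribute only finitely many roots and hence cannot affect boundedness. So it suffices to bound the roots of $\{s_n\}$ for each $i$ separately.

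Next I would extract the algebraic equation satisfied by the limit of the Cauchy transforms. Writing $\C_n(z)=s_n'(z)/(n s_n(z))$ and dividing the equation $T_{\la_{i,n}}(s_n)=0$ by $n^2 s_n(z)$, the term $Q_2(z)s_n''/(n^2 s_n)$ tends (along subsequences, using Lemma~\ref{lm:basic} and the Gauss--Lucas/Proposition~\ref{lm:bound} argument exactly as in the proof of Proposition~\ref{lm:higher}) to $Q_2(z)\C^2$, the term $(Q_1(z)\la_{i,n}+P_1(z))s_n'/(n^2 s_n)$ tends to $\al_i Q_1(z)\C$ since $\la_{i,n}/n\to\al_i$ and $\deg P_1\le 1$, and the zeroth-order term $(\la_{i,n}^2+p\la_{i,n}+q)Q_0/n^2$ tends to $\al_i^2 Q_0$. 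Thus any weak limit $\mu$ of a convergent subsequence of root-counting measures has Cauchy transform satisfying, a.e.\ in $\bC$,
\begin{equation}\label{eq:loclimit}
Q_2(z)\,\C^2+\al_i\,Q_1(z)\,\C+\al_i^2 Q_0=0.
\end{equation}
Since $\al_i\ne 0$ (genericity: $q_{22}q_{00}\ne0$ forces both roots nonzero), \eqref{eq:loclimit} is a genuine quadratic in $\C$ with polynomial coefficients of bounded degree, and its discriminant $\al_i^2(Q_1(z)^2-4Q_2(z)Q_0)$ is a fixed polynomial of degree at most $2$. By Proposition~\ref{pr:stand} (or rather the local Plemelj--Sokhotsky argument underlying it) the support of $\mu$ lies on horizontal trajectories of the associated quadratic differential and must contain the zeros of this discriminant; in particular $\supp\mu$ is a fixed bounded set $K_i$ independent of the subsequence. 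This already pins down \emph{where the limiting roots accumulate}, but not yet that \emph{all} roots of \emph{all} $s_n$ lie in a compact set.

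The main obstacle — and the step I would spend the most care on — is upgrading this asymptotic/limit statement to a uniform bound valid for every $n$ simultaneously, i.e.\ ruling out a small number of stray roots escaping to infinity. Here the cleanest route is the self-improving argument of Proposition~\ref{lm:higher}: by Proposition~\ref{lm:bound} applied to $\{s_n\}$, almost all roots of $\{s_n'\}$ stay in a bounded convex set, so $\{\mu_n'\}$ has weakly convergent subsequences with limit $\mu'$, and the logarithmic-potential comparison $u\ge u'$ of Lemma~\ref{lm:add2} holds, with equality forced once we show $s_n''/(n^2 s_n)$ does not vanish on a set of positive measure — which follows from \eqref{eq:loclimit} since $\al_i^2 Q_0\ne0$ makes $\C\equiv0$ impossible. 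Hence $\mu=\mu'$, the full sequence $\{\mu_n'\}$ converges, and iterating gives convergence for all higher derivatives to the same $\mu$. A polynomial all of whose derivatives (as measures) have support in a fixed bounded set, together with the sharper localization statement — this is exactly the content one expects to invoke from the cited preprint \cite{BoPi}, or alternatively can be proven directly: if a root $\xi_n$ of $s_n$ had $|\xi_n|\to\infty$, then evaluating the divided equation at points near $\xi_n$ and using that $\C_n$ has a pole of residue $\ge 1/n$ there while the coefficients $Q_2,Q_1,Q_0$ are polynomials of degree $\le 2,1,0$, one derives a contradiction with \eqref{eq:loclimit} holding uniformly outside a fixed compact set. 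I would therefore finish by: (a) choosing $R$ so large that $Q_2$ has no zeros in $|z|>R$ and the quadratic \eqref{eq:loclimit} has both roots $\C$ bounded away from the values a Cauchy transform can take near a far-away pole; (b) observing that on $|z|>R$ the equation $T_{\la_{i,n}}(s_n)=0$ divided by the leading coefficient exhibits $s_n$ as a solution of a second-order ODE whose only singularity in $|z|>R$ is the irregular point $\infty$, so $s_n$ has no zeros there once $n$ is large — and the finitely many small-$n$ cases contribute finitely many roots. Taking $K$ to be the closed disk of radius $R$ enlarged to contain those finitely many exceptional roots completes the proof.
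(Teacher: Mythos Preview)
Your proposal has a genuine gap at exactly the point you yourself flag as ``the main obstacle'': upgrading from a statement about weak limits of root-counting measures to a bound on \emph{all} roots of \emph{all} $s_n$. Everything up through equation~\eqref{eq:loclimit} is fine and is indeed the kind of asymptotic computation used elsewhere in the paper, but it only tells you where the bulk of the roots accumulates. Your suggested repairs do not close the gap. Showing $\mu=\mu'$ via Lemma~\ref{lm:add2} still concerns only the limit measures and says nothing about stray roots. Invoking the preprint \cite{BoPi} is precisely what the paper avoids by giving an independent argument. The direct argument ``evaluate near a far-away root $\xi_n$'' is not fleshed out: the residue of $\C_n$ at $\xi_n$ is only $1/n$, which vanishes in the limit, so it is unclear what contradiction with \eqref{eq:loclimit} you obtain. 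Finally, step~(b) --- that $s_n$ has no zeros on $|z|>R$ because the ODE has no finite singularity there --- is simply false: solutions of second-order linear ODEs routinely have zeros in singularity-free regions (and in any case $\infty$ is a regular, not irregular, singular point here).

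The paper's proof takes a completely different route that bypasses weak limits entirely. It sets $w_n=p_n'/(\la_n p_n)$, derives from $T_{\la_n}(p_n)=0$ a first-order nonlinear equation for $w_n$, passes to the local coordinate $t=1/z$ at infinity, and writes $w_n=c_1(n)t+c_2(n)t^2+\cdots$. The key computation is an explicit recursion for the $c_k(n)$ whose denominators have the form $(2c_1(n)-k\eps_n)q_{22}+q_{11}+\eps_n p_{11}$ with $\eps_n=1/\la_n$. The genericity hypothesis (roots of the characteristic polynomial have distinct arguments) is used precisely here, to show these denominators are bounded below \emph{uniformly in both $k$ and $n$}. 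This gives a majorant power series with a fixed positive radius of convergence, independent of $n$, so all the $w_n$ are analytic and nonvanishing in a fixed disc around $\infty$. Since $w_n$ has a pole at every zero of $p_n$, no $p_n$ can have a zero in that disc, and the compact set $K$ is its complement. This uniform-in-$n$ analyticity near $\infty$ is the missing idea in your plan.
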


\begin{proof} Since  $T_\la$ is assumed to be of general type,  one gets  $Q_0\neq 0$.
Therefore, without loss of generality
we can assume that  $Q_0 = 1$ in \eqref{eq:triv}. Let $\{p_n\},
\deg(p_n) = n$ be a sequence of eigenpolynomials for
\eqref{eq:triv}, and assume that $\lim_{n\to \infty}
\frac{\la_n}{n}=\al$. (By Lemma~\ref{pr:basic},  $\al$ equals
either $\al_1$ or $\al_2$.) Define $w_n = \frac{p'_n}{\la_np_n}$
and notice that  $p_n = e^{\la_n\int w_n dz}$.
 We then have
$$p'_n = \la_nw_np_n;\; p^{\prime\prime}_n = (\la_n^2w_n^2 + \la_nw'_n)p_n.$$

Substituting these expressions in \eqref{eq:triv},  we obtain:
$$p_n(Q_2(z)(\la_n^2w_n^2(z) +\la_nw'_n(z)) + \la_n^2Q_1(z)w_n(z) + P_1(z)\la_nw_n(z) +\la^2_n + p\la_n + q
= 0.$$ For each fixed $n$,  near  $z = \infty$  we can  conclude
that
$$Q_2(z)(\la_n^2w_n^2(z)+\la_nw'_n (z))+\la_n^2Q_1(z)w_n(z)+P_1(z)\la_nw_n(z)+\la_n^2+p\la_n+q = 0.$$
This relation defines a rational function $w_n$ near infinity.  We
will show that the sequence   $\{w_n\}$  converges uniformly to an
analytic function $w$ in a sufficiently small disc around
$\infty$. Moreover $w$ does not vanish identically.
Proposition~\ref{pr:local}   will immediately follow from this
claim. Introducing $t = \frac{1}{z},$ one obtains
$$\widetilde Q_2\left(\left(\frac{w_n}{t}\right)^2-\frac{1}{\la_n}w'_n\right)+
\widetilde Q_1\left(\frac{w_n}{t}\right)
+\frac{1}{\la_n}\widetilde P_1\left(\frac{w_n}{t}\right)+ 1
+\frac{ p}{\la_n}+\frac{ q}{\la^2_n}= 0,$$ where $\widetilde
Q_2(t) := t^2Q_2(1/t),\; \widetilde Q_1(t) := tQ_1(1/t)$ and
$\widetilde P_1(t) := tP_1(1/t)$. Expand $w_n = c_1t + c_2t^2 +
...$  in a power series  around $\infty$, i.e. around $t = 0$. (By
a  slight abuse  of notation, we temporarily disregard  the fact
that  the coefficients $c_k$ depend on $n$  until we make their
proper estimate.) Set $(w_n/t)^2 = b_0 + b_1t +\dots$. Then  $$b_k
= c_1c_{k+1} + c_2c_k +...+ c_kc_2 + c_{k+1}c_1.$$ Finally,
introduce  $\eps_n = 1/\la_n$. Using these notations we obtain the
following system of recurrence relations for the coefficients
$c_k$:
$$q_{22}c^2_1 + (q_{11} - \eps_nq_{22} + \eps_np_{11})c_1 + 1 + \eps_n p + \eps^2_n q = 0,$$
$$q_{22}(b_1 - 2\eps_nc_2) + q_{12}(b_0 - \eps_nc_1) + (q_{11} + \eps_np_{11})c_2 +
(q_{01} + \eps_np_{01})c_1 = 0,$$
$$q_{22}(b_2-3\eps_nc_3)+q_{12}(b_1-2\eps_n c_2)+q_{02}(b_0-\eps_n c_1)+
(q_{11}+\eps_np_{11})c_3+(q_{01}+\eps_np_{01})c_2 = 0,$$
and, more generally,
$$q_{22}(b_k - (k + 1)\eps_n c_{k+1}) + q_{12}(b_{k-1} - k\eps_n c_k) +
q_{02}(b_{k-2} - (k-1)\eps_n c_{k-1})+
(q_{11} + \eps_np_{11})c_{k+1}$$ $$ + (q_{01} + \eps_np_{01})c_k =
0\quad \text{for} \quad k\ge 2.$$

Therefore, for any given $n$, we get $2$ possible values for
$c_1(n)$, which  tend to the roots of $q_{22}t^2 + q_{11}t + 1 =
0$ as $n\to \infty$. Notice that  $c_1(n) \to \frac{ 1}{\al}$ as
$n \to  \infty$. Choosing one  of two possible values for  $c_1,$
we uniquely determine the remaining  coefficients  (as
 rational functions of the previously calculated coefficients). Introducing
 $\tilde b_k =b_k - 2c_1c_{k+1},$  we can observe that $\tilde b_k$
 is independent of $c_{k+1}$ and we obtain the
following explicit formulas:
$$c_2 = -\frac{ q_{12}(c^2_1 - \eps_n c_1) + (q_{01} + \eps_np_{01})c_1}
{(2c_1 - 2\eps_n)q_{22} + q_{11} + \eps_np_{11}},$$
$$c_3 = -
\frac{q_{22}\tilde b_2 + q_{12}(b_1 - 2\eps_n c_2) + q_{02}(b_0 -
\eps_n c_1) + (q_{01} + \eps_np_{01})c_2} {(2c_1 - 3\eps_n)q_{22}
+ q_{11} + \eps_np_{11}},$$ and more generally,
\[ %
\begin{split} %
c_k =& - \frac{q_{22}\tilde b_{k-1} + q_{12}(b_{k-2} - (k -
1)\eps_n c_{k-1})}{(2c_1 - k\eps_n)q_{22} + q_{11} + \eps_n
p_{11}} \\  %
&+ \frac{q_{02}(b_{k-2} - (k - 3)\eps_n c_{k-3}) + (q_{01} +
\eps_n p_{01})c_{k-1}}{(2c_1 - k\eps_n)q_{22} + q_{11} +
\eps_n p_{11}}. %
\end{split} %
\] %
We will now  include  the dependence of $c_k$ on $n$ and show that
the coefficients $c_k(n)$ are majorated by the coefficients of a
convergent power series independent of  $n$.  First we show that
the denominators in these recurrence relations are bounded from
below. Notice that under our assumption,  the rational  functions
$w_n$ exist and have a power series expansion near $z = \infty$
with coefficients given by the above recurrence relations.
Therefore the denominators in these recurrences do not vanish.
Notice also that  $\eps_n \simeq \frac{c_1(n)}{n}$ asymptotically.
For fixed $k, $ it is therefore clear that the limits
$$\lim_{n\to\infty} (2c_1(n) - k\eps_n)q_{22} + q_{11} + \eps_np_{11} =
\lim_{n\to \infty} 2c_1(n)q_{22} + q_{11}$$
vanish if and only if  the characteristic polynomial
\eqref{eq:charac} has a double root. We must however find a
uniform bound for $c_k(n)$ valid  for all $k$ simultaneously.
Indeed, there might exist a subsequence $I \subset \NN$  of $k_n$
such that
\begin{equation}\label{eq:limit}
\lim_{n\in I;n\to \infty}(2c_1(n) - k_n\eps_n)q_{22} + q_{11} +
\eps_np_{11} = 0.
\end{equation}
(1) But this implies, using the asymptotics of $c_1(n)$ and
$\eps_n$, the existence of a real number $r$ such that $\frac{1 -
r}{\al} = -\frac{q_{22}}{2q_{11}}$  which is clearly impossible if
the characteristic equation does not have two roots with the same
argument.  Thus we have established a positive lower bound for the
absolute value of the denominators in the recurrence relations for
the coefficients $c_k$. The latter circumstance gives us a
possibility of majorizing the coefficients $c_k(n)$ independently
of $k$ and $n$. Namely, if there is a unbounded sequence
$k_n\eps_n,$  then we can factor  it out from the rational
functions in the recurrence. The existence of the sequence
mentioned above follow from an elementary lemma stated below,
which we leave without a proof. Thus, Proposition~\ref{pr:local}
is now settled.
\end{proof}

\begin{lemma}\label{lm:add}  Consider a recurrence relation $c_{m+1} = P_m(c_1,..., c_m)$ where each
$P_m$ is a polynomial and assume that $d_{m+1} = Q_m(d_1,...,
d_m)$  is a similar recurrence relation whose  polynomials have
all positive coefficients. If the polynomials under consideration
satisfy the inequalities
$$|P_m(z_1,..., z_m)| \le  Q_m(|z_1|,..., |z_m|),$$
then the power series $\sum c_iz^i$ is dominated by the series
$\sum d_iz^i$ whenever $d_1\ge |c_1|$.
\end{lemma}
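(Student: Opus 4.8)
The plan is to prove the stated majorization by a direct induction on the index, establishing that $|c_i|\le d_i$ for all $i\ge 1$. This termwise bound is exactly the assertion that the series $\sum c_iz^i$ is dominated by $\sum d_iz^i$; in particular it forces the radius of convergence of $\sum c_iz^i$ to be at least that of $\sum d_iz^i$, which is the form in which the lemma is used in the proof of Proposition~\ref{pr:local} (to obtain uniform convergence of the $w_n$ near $\infty$).

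First I would isolate the one elementary fact that makes the argument run: a polynomial $R(x_1,\dots,x_m)$ all of whose coefficients are nonnegative is nondecreasing in each variable on the nonnegative orthant and takes nonnegative values there (each monomial $a_\alpha x^\alpha$ with $a_\alpha\ge 0$ is monotone and nonnegative for $x_j\ge 0$, and one sums). Using this together with the hypothesis $d_1\ge |c_1|\ge 0$, a short auxiliary induction shows $d_m\ge 0$ for every $m$: each $d_{m+1}=Q_m(d_1,\dots,d_m)$ is the value of a nonnegative-coefficient polynomial at nonnegative arguments. Thus the quantities fed into the $Q_m$ in the main induction are always genuine nonnegative reals, which is what legitimizes invoking monotonicity.

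The main induction then proceeds as follows. The base case $i=1$ is precisely the hypothesis $d_1\ge |c_1|$. For the inductive step, assuming $|c_j|\le d_j$ (hence $0\le |c_j|\le d_j$) for $j=1,\dots,m$, I would apply the comparison hypothesis $|P_m(z_1,\dots,z_m)|\le Q_m(|z_1|,\dots,|z_m|)$ at the point $(z_1,\dots,z_m)=(c_1,\dots,c_m)$ and then the coordinatewise monotonicity of $Q_m$ on the nonnegative orthant, to obtain
\[
|c_{m+1}|=|P_m(c_1,\dots,c_m)|\le Q_m(|c_1|,\dots,|c_m|)\le Q_m(d_1,\dots,d_m)=d_{m+1},
\]
which closes the induction. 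Finally, from $|c_i|\le d_i$ for all $i$, a routine comparison of power series yields absolute convergence of $\sum c_iz^i$ on any disc of convergence of $\sum d_iz^i$.

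I do not expect a genuine obstacle in this lemma; the only point that demands care is that the comparison inequality $|P_m(\,\cdot\,)|\le Q_m(|\,\cdot\,|)$ is only exploitable \emph{in combination with} the monotonicity of the $Q_m$, and that monotonicity in turn requires the inductive hypothesis to control the moduli $|c_j|$ rather than the $c_j$ themselves — which is exactly why the hypothesis and conclusion are phrased in terms of $d_1\ge |c_1|$ and of majorant series rather than of the raw coefficients. Establishing nonnegativity of the $d_m$ at the outset, as in the auxiliary induction above, removes the last bit of friction, after which the whole proof is essentially one line.
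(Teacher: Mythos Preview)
Your proof is correct. The paper does not actually prove this lemma: immediately before stating it the authors write ``an elementary lemma stated below, which we leave without a proof.'' Your induction on $m$ showing $|c_m|\le d_m$, after first noting that all $d_m\ge 0$ so that the monotonicity of $Q_m$ on the nonnegative orthant applies, is exactly the straightforward argument the authors had in mind, and it supplies what the paper omits.
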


\section{Domain configurations of normalized quadratic differentials} \label{Section-6}
\setcounter{equation}{0}

Let $Q(z;a,b,c)\,dz^2$ be a quadratic differential of the form
(\ref{1.4}). Multiplying $Q(z;a,b,c)\,dz^2$ by a non-zero constant
$A\in \mathbb{C}$, we rescale the corresponding $Q$-metric
$|Q|^{1/2}\,|dz|$ by a positive constant $|A|^{1/2}$. Hence
$A\,Q(z;a,b,c)\,dz^2$ has the same geodesics as the quadratic
differential $Q(z;a,b,c)\,dz^2$ has. Obviously, multiplication
does not affect the homotopic classes.
 Thus, while studying geodesics of the quadratic differential $Q(z;a,b,c)\,dz^2$,
 we may assume without loss of generality that it
has the form %
\begin{equation} \label{6.1} %
Q(z)\,dz^2=-\frac{(z-p_1)(z-p_2)}{(z-1)^2(z+1)^2}\,dz^2. %
\end{equation} %
In Sections 6--9, we will work  with the generic case; i.e   we assume that %
\begin{equation} \label{6.1.1} %
p_1\not= \pm 1, \quad p_2\not= \pm 1, \quad p_1\not = p_2, %
\end{equation} %
unless otherwise is mentioned. Some typical configurations in the
limit (or non-generic) cases are shown in Fig.~5a--5g. Expanding
$Q(z)$ into Laurent series at $z=\infty$, we
obtain %
\begin{equation} \label{6.2} %
Q(z)=-\frac{1}{z^2}+{\mbox{higher degrees of $z$}} \quad  \quad
{\mbox{as $z\to \infty$.}}
\end{equation} %

Since the leading coefficient in the series expansion (\ref{6.2})
is real and negative it follows that  $Q(z)\,dz^2$ has a circle
domain $D_\infty$ centered at $z=\infty$. The boundary
$L_\infty=\partial D_\infty$ of $D_\infty$ consists of a finite
number of critical trajectories of the quadratic differential
$Q(z)\,dz^2$ and therefore $L_\infty$ contains at least one of the
zeros $p_1$ and $p_2$ of $Q(z)\,dz^2$.

Next, we will discuss possible trajectory structures of
$Q(z)\,dz^2$  on the complement $D_0=\mathbb{C}\setminus
\overline{D}_\infty$.  As we have mentioned in Section~3,
according to the Basic Structure Theorem, \cite[Theorem~3.5]{Je},
the domain configuration of a quadratic differential $Q(z)\,dz^2$
on $\overline{\mathbb{C}}$, which will be denoted by
$\mathcal{D}_Q$, may include circle domains, ring domains, strip
domains, end domains, and density domains. For  the quadratic
differential (\ref{6.1}),  by the Three Pole Theorem \cite[Theorem
3.6]{Je}, there are no density domains in its domain configuration
$\mathcal{D}_Q$. In addition, since $Q(z)\,dz^2$ has only three
poles of order two each, the domain configuration $\mathcal{D}_Q$
does not contain end domains and may contain at most three circle
domains centered at $z=\infty$, $z=-1$, and $z=1$.

We note here that $\mathcal{D}_Q$ may have strip domains (also
called  \emph{bilaterals}) with vertices at the double poles
$z=-1$ and $z=1$ but $\mathcal{D}_Q$ does not have ring domains.
Indeed, if there were a ring domain $\widehat{D}\subset D_0$ with
boundary components $l_1$ and $l_2$ then, by the Basic Structure
Theorem, each component must contain a zero of $Q(z)\,dz^2$. In
particular, $p_1\not=p_2$ in this case. Suppose that $l_1$
contains a zero $p_1$ and that $p_1\in L_\infty$. Then $L_\infty$
contains a critical trajectory $\gamma'$, which has both its end
points at $p_1$. There is one more critical trajectory $\gamma''$,
which has one of its end points at $p_1$. This trajectory
$\gamma''$ is either lies on the boundary of the circle domain
$D_\infty$ or it lies on the boundary of the ring domain
$\widehat{D}$. Therefore the second end point of $\gamma''$ must
be at a zero of $Q(z)\,dz^2$. Since the only remaining zero is
$p_2$, which lies on the boundary component $l_2$ not intersecting
$l_1$, we obtain a contradiction with our assumption. The latter
shows that $\mathcal{D}_Q$ does not have ring domains.

\smallskip

 Next, we will classify topological types of domain configurations according
to the number of circle domains in $\mathcal{D}_Q$. The first
digit in our further classifications stands for the section where
this classification is introduced. The second and further digits
will denote the case under consideration.

\textbf{6.1.} Assume first that $\mathcal{D}_Q$ contains three
circle domains $D_\infty\ni \infty$, $D_{-1}\ni -1$, and $D_1\ni
1$. Then, of course, there are  no strip domains in
$\mathcal{D}_Q$. In this case, the domains $D_\infty,D_{-1},D_1$
constitute an extremal configuration of the Jenkins extremal
problem for the weighted sum of reduced moduli with appropriate
choice of positive weights $\alpha_\infty$, $\alpha_{-1}$, and
$\alpha_1$; see, for example, \cite{Str}, \cite{S1}, \cite{S2}.
More precisely, the
problem is to find  all possible  configurations realizing the following maximum: %
\begin{equation} \label{6.3} %
\max \
\left(\alpha_\infty^2m(B_\infty,\infty)+\alpha_{-1}^2m(B_{-1},-1)+\alpha_1^2m(B_1,1)\right) %
\end{equation} %
 over all triples of non-overlapping
simply connected domains $B_\infty\ni \infty$, $B_{-1}\ni -1$, and
$B_1\ni 1$. Here, $m(B,z_0)$ stands for  the reduced module of a
simply connected domain $B$ with respect to the point $z_0\in B$;
see \cite[p.24]{Je}.

Since the extremal configuration of problem~(\ref{6.3}) is unique
it follows that the domains $D_\infty$, $D_{-1}$, and $D_1$ are
symmetric with respect to the real axis. In particular, the zeros
$p_1$ and $p_2$ are either both real or they are complex
conjugates of each other. Of course, this symmetry property of
zeros can be derived directly from the fact that the leading
coefficient of the Laurent expansion of $Q(z)$ at each its pole is
negative in the case under consideration. We have three
essentially different possible positions for the zeros: %
\begin{enumerate} %
\item[\textbf{(a)}] %
$-1<p_2<p_1<1$,

\item[\textbf{(b)}] %
$1<p_2<p_1$ or $p_1<p_2<-1$, %
\item[\textbf{(c)}] %
$p_1=\overline{p}_2=p$, where $\Im p>0$.
\end{enumerate} %

We note here that in the case when $-1<p_2<1$ and, in addition,
$p_1>1$ or $p_1<-1$ the domain configuration $\mathcal{D}_Q$ must
contain a strip domain.

\medskip

Case \textbf{(a)}. %
The trajectory structure of $Q(z)\,dz^2$ corresponding to this
case is shown in Fig.~1a. There are three critical trajectories:
$\gamma_{-1}$, which is on the boundary of $D_{-1}$ and  has both
its end points at $z=p_2$; $\gamma_1$, which is on the boundary of
$D_1$ and has both its end points at $z=p_1$, and $\gamma_0$,
which is the segment $[p_2,p_1]$.

\medskip

Case \textbf{(b)}. An example of a domain configuration for the
case $1<p_2<p_1$ is shown in Fig.~1b. The boundary of $D_1$
consists of a single critical trajectory $\gamma_1$ having  both
end points at $p_2$.  The boundary of $D_{-1}$ consists of
critical trajectories $\gamma_\infty$, $\gamma_1$, and $\gamma_0$,
which is the segment $[p_2,p_1]$. In the case $p_1<p_2<-1$, the
domain configuration is similar.

Case \textbf{(c)}. %
Since the domain configuration is symmetric, $p_1$ and $p_2$ both
belong to the boundary of $D_\infty$. Furthermore, there are three
critical trajectories: $\gamma_{-1}$, which joins $p_1$
 and $p_2$ and intersects the real axis at some point $d_{-1}<-1$, $\gamma_1$, which joins $p_1$
 and $p_2$ and intersects the real axis at some point $d_1>1$, and $\gamma^0$, which joins $p_1$
 and $p_2$ and intersects the real axis at some point $d_0$,
 $-1<d_0<1$. In this case, $\gamma_1\cup \gamma_0\subset \partial D_1$,
 $\gamma_{-1}\cup \gamma_0\subset \partial D_{-1}$. An example of
 a domain configuration of this type is shown in Fig.~1c. %

\medskip

\textbf{6.2.} Next we consider the case when $\mathcal{D}_Q$ has
exactly  two circle domains. Suppose that these domains are
$D_\infty\ni \infty$ and $D_{-1}\ni -1$. In this case it is not
difficult to see that $L_\infty$ contains exactly one zero.
Indeed, if $p_1,p_2\in L_\infty$, then $L_\infty$ must contain one
or two critical trajectories joining $p_1$ and $p_2$. Suppose that
$L_\infty$ contains one such trajectory, call it $\gamma_0$. Since
$p_1,p_2\in L_\infty$ the boundary of $D_\infty$ must contain a
trajectory $\gamma_1$, which has both its end points at $p_1$ and
a trajectory $\gamma_{-1}$, which has both its end points at
$p_2$. Thus, $\gamma_1\cup\{p_1\}$ and $\gamma_{-1}\cup\{p_2\}$
each surrounds a simply connected domain, which must contain a
critical point of $Q(z)\,dz^2$. This implies that $z=-1$ and $z=1$
are centers of circle domains of $Q(z)\,dz^2$, which is the case
considered in part \textbf{ 6.1(a)}.

If $L_\infty$ contains two critical trajectories joining $p_1$ and
$p_2$, then there are critical trajectories $\gamma'$ having one
of its end points at $p_1$ and $\gamma''$ having one of its end
points at $p_2$. If $\gamma'=\gamma''$, then $D_0\setminus
\gamma'$ consists of two simply connected domains, which in this
case must be circle domains of $Q(z)\,dz^2$ as it is shown in
Fig.~1c.

If $\gamma'\not=\gamma''$, then each of these trajectories must
have its second end point at one of the poles $z=-1$ or $z=1$.
Moreover, if $\gamma'$ has an end point at $z=-1$ then $\gamma''$
must have its end point at $z=1$. Thus, there is no second circle
domain of $Q(z)\,dz^2$ in this case. Instead, there is one circle
domain $D_\infty$ and a strip domain, call it  $G_2$, as it shown
in Fig. 3a-3e.

 Now, let  $p_1$ be the only zero of $Q(z)\,dz^2$ lying on $L_\infty$.
Then  $L_\infty$ consists of a single critical trajectory of
$Q(z)\,dz^2$, call it $\gamma_\infty$, together with its end
points, each of  which is at $p_1$. There is one more critical
trajectory, call it $\gamma_1^+$, that has one of its end points
at $p_1$. Then the second end point of $\gamma_1^+$ is either at
the point $p_2$ or at the second order pole at $z=1$.

If $\gamma_1^+$ terminates at $p_2$, then there is one more
critical trajectory, call it $\gamma_2$, having one of its end
points at $p_2$.  Since $D_{-1}$ is a circle domain and $\partial
D_{-1}$ contains at least one zero of $Q(z)\,dz^2$ it follows that
$\gamma_2$ belongs to the boundary  of $D_{-1}$.  Since $\gamma_2$
lies on the boundary of $D_{-1}$ it have to terminate at a finite
critical point of $Q(z)\,dz^2$ and the only possibility for this
is that $\gamma_2$ terminates at $p_2$. In this case,
$\gamma_\infty$, $\gamma_1^+$, and $\gamma_2$ divide
$\overline{\mathbb{C}}$ into three circle domains, the case which
was already discussed in part \textbf{6.1(b)}.

\medskip

Suppose that $\gamma_1^+$ joins the points $z=p_1$ and $z=1$. Then
$\mathcal{D}_Q$ contains a strip domain $G_1$. Since $z=1$ is the
only second order pole of $Q(z)\,dz^2$, which has a non-negative
non-zero leading coefficient, the strip domain $G_1$ has both its
vertices at the point $z=1$. Furthermore, one side of $G_1$
consists of two critical trajectories $\gamma_\infty$ and
$\gamma_1^+$. Therefore there is a critical trajectory, call it
$\gamma_1^-$ of $Q(z)\,dz^2$ lying on $\partial G_1$, which joins
$z=1$ and $z=p_2$. Now, the remaining possibility is that the
boundary of $D_{-1}$ consists of a single critical trajectory
$\gamma_{-1}$, which has both its end points at $p_2$. Then $G_1$
is the only strip domain in $\mathcal{D}_Q$ and the second side of
$G_1$ consists of the critical trajectories $\gamma_1^-$ and
$\gamma_{-1}$. Two examples of a domain configuration of this
type, symmetric and non-symmetric,  are shown in Fig.~2a and
Fig.~2b.

\medskip

\textbf{6.3.} Finally,  we consider the case when $D_\infty$ is
the only circle domain of $Q(z)\,dz^2$. We consider two
possibilities.

Case \textbf{(a)}. Suppose that both zeros $p_1$ and $p_2$ belong
to the boundary of $D_\infty$. As we have found in part\textbf{
6.2} above, the domain configuration in this case consists of the
circle domain $D_\infty$ and the strip domain $G_2$. The boundary
of $D_\infty$ consists of two critical trajectories
$\gamma_\infty^+$ and $\gamma_\infty^-$ and their end points,
while the boundary of $G_2$ consists of the trajectories
$\gamma_\infty^+$, $\gamma_\infty^-$, $\gamma_1$, and
$\gamma_{-1}$ and their end points, as it is shown in Fig.~3a-3c.

Case \textbf{(b)}. Suppose that  the boundary $L_\infty$ of
$D_\infty$ contains only one zero $p_1$. Then there is a critical
trajectory $\gamma_\infty$ having both its end points at $p_1$
such that $L_\infty=\gamma_\infty\cup \{p_1\}$. Since $p_1$ is a
simple zero of $Q(z)\,dz^2$ there is one more critical trajectory
having one of its end points at $p_1$. The second end point of
this trajectory is either at the pole $z=1$, or at the pole
$z=-1$, or at the zero $z=p_2$. Depending on which of these
possibilities is realized, this trajectory will be denoted by
$\gamma_1$, or $\gamma_{-1}$, or $\gamma_0$, respectively. Thus,
we have two essentially different subcases. %

Case \textbf{(b1)}. Suppose that there is a critical trajectory
$\gamma_0$ joining the zeros $p_1$ and $p_2$. Then there are two
critical trajectories, call them $\gamma_1$ and $\gamma_{-1}$,
each of which has one of its end point at $p_2$. We note that
$\gamma_1\not=\gamma_{-1}$. Indeed, if $\gamma_1=\gamma_{-1}$,
then the closed curve $\gamma_1\cup \{p_2\}$ must enclose a
bounded circle domain of $Q(z)\,dz^2$, which does not exist.
Furthermore, $\gamma_1$ and $\gamma_{-1}$ both cannot have their
second end points at the same pole at $z=1$ or $z=-1$. If this
occurs then again $\gamma_1$ and $\gamma_{-1}$ will enclose a
simply connected domain having a single pole of order $2$ on its
boundary, which is not possible. The remaining possibility is that
one of these critical trajectories, let assume that $\gamma_1$,
joins the zero $z=p_2$ and the pole at $z=1$ while $\gamma_{-1}$
joins $z=p_2$ and $z=-1$. %

In this case the domain configuration $\mathcal{D}_Q$ consists of
the circle domain $D_\infty$ and the strip domain $G_2$; see
Fig.~3d- and Fig.~3e. The boundary of $G_2$ consists of two sides,
call them $l_1$ and $l_2$. The side $l_1$ is the set of boundary
points of $G_2$ traversed by the point $z$ moving along $\gamma_1$
from $z=1$ to $z=p_2$ and then along $\gamma_{-1}$ from the point
$z=p_2$ to $z=-1$. The side $l_2$ is the set of boundary points of
$G_2$ traversed by the point $z$ moving along $\gamma_1$ from
$z=1$ to $z=p_2$, then along $\gamma_0$ from $z=p_2$ to $z=p_1$,
then along $\gamma_\infty$ from $z=p_1$ to the same point $z=p_1$,
then along $\gamma_0$ from $z=p_1$ to $z=p_2$, and finally along
$\gamma_{-1}$ from $z=p_2$ to $z=-1$.

Case \textbf{(b2)}. Suppose that there is a critical trajectory
$\gamma_1$ joining the zero $p_1$ and the pole $z=1$. Then there
is a strip domain, call it $G_1$, which has both its vertices at
the pole $z=1$ and has the critical trajectories $\gamma_1$ and
$\gamma_\infty$ on one of its sides, call it $l_1^1$. More
precisely, the side $l_1^1$ is the set of boundary points of $G_1$
traversed by the point $z$ moving along $\gamma_1$ from $z=1$ to
$z=p_1$,  then along $\gamma_\infty$ from $z=p_1$ to the same
point $z=p_1$, and then along $\gamma_1$ from $z=p_1$ to $z=1$.

Let $l_1^2$ denote the second side of $G_1$. Since a side of a
strip domain always has a finite critical point it follows that
$l_1^2$ contains two critical trajectories, call them $\gamma_0^+$
and $\gamma_0^-$, which join the pole $z=1$ with zero $z=p_2$.
There is one critical trajectory of $Q(z)\,dz^2$, call it
$\gamma_{-1}$, which has one of its end points at $z=p_2$. Since
$z=-1$ is a second order pole, which is not the center of a circle
domain, there should be at least one critical trajectory of
$Q(z)\,dz^2$ approaching $z=-1$ at least in one direction. Since
the end points of all critical trajectories, except $\gamma_{-1}$,
are already identified and they are not at $z=-1$, the remaining
possibility is that $\gamma_{-1}$ has its second end point at
$z=-1$. In this case there is one more strip domain, call it
$G_2$, which has vertices at the poles $z=1$ and $z=-1$ and sides
$l_2^1$ and $l_2^2$. Two examples of configurations with one
circle domain and two strip domains, symmetric and non-symmetric,
are shown in Fig.~4a and Fig.~4b. Now we can identify all sides of
$G_1$ and $G_2$. The side $l_1^2$ is the set of boundary points of
$G_1$ traversed by the point $z$ moving along $\gamma_0^+$ from
$z=1$ to $z=p_2$ and then along
$\gamma_0^-$ from $z=p_2$ to  $z=1$. %
The side $l_2^1$ is the set of boundary points of $G_2$ traversed
by the point $z$ moving along $\gamma_0^+$ from $z=1$ to $z=p_2$
and then along $\gamma_{-1}$ from $z=p_2$  to $z=-1$. Finally, the
side $l_2^2$ is the set of boundary points of $G_2$ traversed by
the point $z$ moving along $\gamma_0^-$ from $z=1$ to $z=p_2$ and
then along $\gamma_{-1}$ from $z=p_2$  to $z=-1$; see Fig.~4a and
Fig.~4b.

Case \textbf{(b3)}. In the case when there is a critical
trajectory  joining the zero $p_1$ and the pole $z=-1$, the domain
configuration is similar to one described above, we just have to
switch the roles of the poles at $z=1$ and $z=-1$.


\begin{remark}  \label{Remark-2} %
We have described above all possible configurations in the generic
case; i.e.  under conditions (\ref{6.1.1}). The remaining special
cases can be obtained from the generic case as limit cases when
$p_2\to -1$, when $p_2\to p_1$; etc. In the case $p_1=p_2$,
possible configurations are shown in Fig.~5a-5c.

In the case when $p_2=-1$, $p_1\not=\pm 1$, possible
configurations are shown in Fig.~5d-5g.

In the case when $p_1=p_2=1$, the limit position of critical
trajectories is just a circle centers at $z=-1$ with radius
$2$configuration and in the case  when $p_1=1$, $p_2=-1$ there is
one critical trajectory which is an open interval from $z=-1$ to
$z=1$.
\end{remark} %

\section{How parameters determine the type of  domain configuration} %
\setcounter{equation}{0}

Our  goal in this section is to identify the ranges of the
parameters $p_1$ and $p_2$ corresponding to topological types
discussed in Section~6. For a fixed $p_1$ with $\Im p_1\not=0$, we
will define four regions of the parameter $p_2$. These regions and
their boundary arcs will correspond to domain configurations with
specific properties; see Fig.~6.

It will be useful to introduce the following notation. For $a\in
\mathbb{C}$ with $\Im a\not=0$, by $L(a)$ and $H(a)$ we denote,
respectively, an ellipse and hyperbola with foci at $z=1$ and
$z=-1$, which pass through the point $z=a$. If $\Im a\not= 0$,
then the set $\mathbb{C}\setminus (L(a)\cup H(a))$ consists of
four connected components, which will be denoted by $E_1^+(a)$,
$E_1^-(a)$, $E_{-1}^+(a)$, and $E_{-1}^-(a)$. We assume here that
$1\in E_1^+(a)$, $-1\in E_{-1}^+(a)$, $E_1^-(a)\cap
\mathbb{R}_+\not= \emptyset$, and $E_{-1}^-(a)\cap
\mathbb{R}_-\not=\emptyset$. Furthermore, assuming that $\Im
a\not=\emptyset$, we define the following open arcs:
$L^+(a)=(L(a)\cap
\partial E_1^+(a))\setminus\{a,\bar a\}$, $L^-(a)=(L(a)\cap \partial E_{-1}^+(a))
\setminus\{a,\bar a\}$, $H^+(a)=(H(a)\cap
\partial E_1^+(a))\setminus\{a,\bar a\}$, $H^-(a)=(H(a)\cap \partial E_1^-(a))
\setminus\{a,\bar a\}$. Let $l_1(a)$ and $l_{-1}(a)$ be straight
lines passing through the points $1$ and $\bar a$ and $-1$ and
$\bar a$, respectively. Let $l_1^+(a)$ and $l_{-1}^+(a)$ be open
rays issuing from the points $z=1$ and $z=-1$, respectively, which
pass through the point $z=\overline{a}$ and let $l_1^-(a)$ and
$l_{-1}^-(a)$ be their complementary rays. The line $l_1(a)$
divides $\mathbb{C}$ into two half-planes, we call them  $P_1$ and
$P_2$ and  enumerate such that $P_1\ni 2$. Similarly, the line
$l_{-1}(a)$ divides $\mathbb{C}$ into two half-planes  $P_3$ and
$P_4$, where $P_3\ni -2$.

Before we state the main result of this section, we recall the
reader that  the local structure of trajectories near a pole $z_0$
is completely determined by the leading coefficient of the Laurent
expansion of $Q(z)$ at $z_0$, see \cite[Ch.~3]{Je}.
 In particular, for the quadratic differential $Q(z)\,dz^2$ defined by (\ref{6.1}) we have  %
\begin{equation} \label{7.1.1}  %
Q(z)=-\frac{1}{4}\frac{C_1}{ (z-1)^2}+{\mbox{higher degrees of
$(z- 1)$ \quad as $z\to 1$}}
\end{equation} %
and %
$$ 
Q(z)=-\frac{1}{4}\frac{C_{-1}}{ (z+1)^2}+{\mbox{higher degrees of
$(z+ 1)$ \quad as $z\to -1$.}}
$$ 
Then, assuming that $p_1\not= \pm 1$, $p_2\not= \pm 1$, we find %
\begin{equation}  \label{5.1} %
C_1=(p_1-1)(p_2-1)\not= 0 \quad {\mbox{and}} \quad C_{-1}=(p_1+1)(p_2+1)\not =0.%
\end{equation} %

A complete description of sets of pairs $p_1$, $p_2$ with $\Im
p_1>0$ corresponding to all possible types of domain
configurations discussed in Section~6 is given by the following
theorem. %

\begin{theorem} \label{Theorem 5.1} %
Let $p_1$ with $\Im p_1>0$ be fixed. Then the following holds. %

\textbf{7.A}. The types of domain configurations $\mathcal{D}_Q$
correspond to the following sets of the parameter~$p_2$.
\begin{enumerate} %
\item[\textbf{(1)}] %
If $p_2=\bar p_1$, then the domain configuration $\mathcal{D}_Q$
is of the type \textbf{6.1(c)}.

\item[\textbf{(2)}] %
If $p_2\in l_1^+(p_1)\setminus\{\bar p_1\}$, then $\mathcal{D}_Q$
has the type~\textbf{6.2} with circle domains $D_\infty\ni\infty$
and $D_1\ni 1$. Furthermore, if $p_2\in l_1^+(p_1)\cap
E_1^+(p_1)$, then $p_1\in \partial D_\infty$ and if $p_2\in
l_1^+(p_1)\cap E_{-1}^-(p_1)$, then $p_2\in \partial D_\infty$.

If $p_2\in l_{-1}^+(p_1)\setminus\{\bar p_1\}$, then
$\mathcal{D}_Q$ has the type~\textbf{6.2} with circle domains
$D_\infty\ni\infty$ and $D_{-1}\ni -1$. Furthermore, if $p_2\in
l_{-1}^+(p_1)\cap E_{-1}^+(p_1)$, then $p_1\in \partial D_\infty$
and if $p_2\in l_{-1}^+(p_1)\cap E_{-1}^-(p_1)$, then $p_2\in
\partial D_\infty$.
\item[\textbf{(3a)}] %
If $p_2\in L(a)\setminus\{p_1,\bar p_1\}$, then the domain
configuration $\mathcal{D}_Q$ has type \textbf{6.3(a)}.
Furthermore, if $p_2\in L^+(p_1)$, then there is a critical
trajectory having one end point at $p_2$, which in other direction
approaches the pole $z=1$. Similarly, if $p_2\in L^-(p_1)$, then
there is a critical trajectory having one end point at $p_2$,
which in other direction
approaches the pole $z=-1$. %
\item[\textbf{(3b1)}] %
If $p_2\in H(p_1)\setminus\{p_1,\bar p_1\}$, then $\mathcal{D}_Q$
has type \textbf{6.3(b1)}. Furthermore, if $p_2\in H^+(p_1)$, then
there is a critical trajectory having both  end points at $p_1$.
If $p_2\in H^-(p_1)$, then there is a critical trajectory having
both end points at $p_2$.

\item[\textbf{(3b2)}] %
In all remaining cases, i.e. if $p_2\not\in L(p_1)\cup H(p_1)\cup
l_1^+(p_1)\cup l_{-1}^+(p_1)\cup\{-1,1\}$, the domain
configuration $\mathcal{D}_Q$
belongs to type \textbf{6.3(b2)}. %
Furthermore, if $p_2\in (E_1^+(p_1)\cup E_{-1}^+(p_1))\setminus
(l_1^+(p_1)\cup l_{-1}^+(p_1)\cup\{-1,1\})$, then $p_1\in \partial
D_\infty$ and if $p_2\in (E_1^-(p_1)\cup E_{-1}^-(p_1))\setminus
(l_1^+(p_1)\cup l_{-1}^+(p_1))$, then $p_2\in \partial D_\infty$.

In addition, if $p_2\in E_1^+(p_1)\setminus
(l_1^+(p_1)\cup\{1\})$, then the pole $z=1$ attracts only one
critical trajectory of the quadratic differential (\ref{6.1}),
which has its second end point at $z=p_2$ and if $p_2\in
E_{-1}^-(p_1)\setminus (l_1^+(p_1))$, then the pole $z=1$ attracts
only one critical trajectory of the quadratic differential
(\ref{6.1}), which has its second end point at $z=p_1$. If $p_2\in
E_{-1}^+(p_1)\setminus (l_{-1}^+(p_1)\cup\{-1\})$, then the pole
$z=-1$ attracts only one critical trajectory of the quadratic
differential (\ref{6.1}), which has its second end point at
$z=p_2$ and if $p_2\in E_1^-(p_1)\setminus (l_{-1}^+(p_1))$, then
the pole $z=-1$ attracts only one critical trajectory of the
quadratic differential (\ref{6.1}), which has its second end point
at $z=p_1$. %
\end{enumerate} %

\textbf{7.B}. The local behavior of the trajectories near the
poles $z=1$ and $z=-1$ is controlled  by the position of the zero
$p_2$ with respect to the lines $l_1(p_1)$ and $l_{-1}(p_1)$.
Precisely,
we have the following possibilities. %
\begin{enumerate} %
\item[\textbf{(1)}] %
If $p_2\in l_1^-(p_1)$ or, respectively,  $p_2\in l_{-1}^-(p_1)$,
then $Q(z)\,dz^2$ has radial structure of trajectories near the
pole $z=1$ or, respectively,  near the pole $z=-1$.

\item[\textbf{(2)}] %

If $p_2\in P_1$ or, respectively, $p_2\in P_2$, then the
trajectories of $Q(z)\,dz^2$ approaching the pole $z=1$ spiral
counterclockwise or, respectively, clockwise.

If $p_2\in P_3$ or, respectively, $p_2\in P_4$, then the
trajectories of $Q(z)\,dz^2$ approaching the pole $z=-1$ spiral
counterclockwise or, respectively, clockwise. %
\end{enumerate} %
\end{theorem}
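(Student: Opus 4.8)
The plan is to classify domain configurations by translating the combinatorial data — how many circle domains exist and which zeros lie on $L_\infty$ — into analytic conditions on the position of $p_2$ relative to $p_1$. The starting point is the observation, already recorded in Section~6, that $Q(z)\,dz^2$ always has a circle domain $D_\infty$ centered at $\infty$, so $L_\infty$ contains at least one of $p_1,p_2$, and that ring domains and density and end domains are excluded. The key classical tool is Lemma~\ref{Lemma-4.2}: for each second-order pole $z=1$ (resp. $z=-1$) there is a unique $\theta_0$ for which $e^{i\theta_0}Q\,dz^2$ has a circle domain there, and the boundary of that circle domain is the only critical geodesic loop in the separating homotopy class. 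I would compute $\theta_0$ explicitly from the leading Laurent coefficient $C_1=(p_1-1)(p_2-1)$ (resp. $C_{-1}=(p_1+1)(p_2+1)$) given in \eqref{5.1}: the existence of an honest circle domain of $Q\,dz^2$ itself at $z=1$ requires $C_1<0$, i.e. $(p_1-1)(p_2-1)\in(-\infty,0)$, which is precisely the statement that $p_2$ lies on the ray $l_1^+(p_1)$ obtained by reflecting $p_1$ through $1$ — this is the geometric meaning of the lines $l_1(a)$, $l_{-1}(a)$ introduced before the theorem. This immediately produces part \textbf{7.A(1)} (take $p_2=\bar p_1$, then $C_1,C_{-1}$ are both negative, three circle domains, case \textbf{6.1(c)}) and the circle-domain alternatives in \textbf{7.A(2)}.

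Next, for the local behavior near the poles (part \textbf{7.B}) I would use the standard normal form: near a double pole the trajectory structure is governed by $\arg C_{\pm1}$, giving a radial structure exactly when $C_{\pm1}<0$, logarithmic spirals otherwise, with the direction of spiralling determined by the sign of $\Im C_{\pm1}$. Writing $C_1=(p_1-1)(p_2-1)$ and fixing $\Im p_1>0$, the condition $\Im C_1=0$ with $C_1<0$ defines the ray $l_1^-(p_1)$; the sign of $\Im C_1$ on each side corresponds to the half-planes $P_1$, $P_2$ cut out by the full line $l_1(p_1)$, and similarly for $z=-1$ with $l_{-1}(p_1)$, $P_3$, $P_4$. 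This is essentially a direct computation once the line $l_1(p_1)=\{z:\,\Im((p_1-1)(z-1))=0\}$ is recognized; the only care needed is fixing orientations/labels so that $P_1\ni 2$ and $P_3\ni -2$, which I would pin down by evaluating $\Im C_{\pm1}$ at a convenient test point.

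The substantive step is \textbf{7.A(3a)}, \textbf{(3b1)}, \textbf{(3b2)}: distinguishing case \textbf{6.3(a)} (both zeros on $L_\infty$, one strip domain $G_2$) from \textbf{6.3(b1)} (a critical trajectory $\gamma_0$ joining $p_1,p_2$, one strip domain $G_2$) from \textbf{6.3(b2)} (a trajectory joining $p_1$ to a pole, two strip domains). Here I would argue via the $Q$-metric and the function $F(z)=\int\sqrt{Q}\,dz$: a finite critical trajectory between $p_1$ and $p_2$ exists iff $\int_{p_1}^{p_2}\sqrt{Q(z)}\,dz$ is real for an appropriate branch and path, and the integral $\int_{\pm1}$-type periods control whether a trajectory issuing from $p_1$ hits the other zero, a pole, or closes up on $L_\infty$. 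Computing $\int\sqrt{(z-p_1)(z-p_2)}/(z^2-1)\,dz$ by residues at $z=\pm1$ and $z=\infty$, the relevant reality conditions become conditions on whether $p_2$ lies inside/outside the ellipse $L(p_1)$ or across the hyperbola $H(p_1)$ through $p_1$ with foci $\pm1$ — this is where the confocal conics enter, since $L(a)$ and $H(a)$ are exactly the level sets of $\mathrm{Re}$ and $\mathrm{Im}$ of the relevant elliptic-type integral. I expect \textbf{this period computation and the identification of its level curves with $L(p_1)$ and $H(p_1)$} to be the main obstacle: one must track branches of the square root carefully, handle the three-pole structure, and rule out spurious configurations using the Basic Structure Theorem and the non-existence of ring domains established in Section~6. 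The finer assertions in \textbf{(3b2)} about which pole attracts only one critical trajectory, and whether $p_1$ or $p_2$ lies on $\partial D_\infty$, then follow by combining the sign of the relevant period with the local spiralling data from \textbf{7.B} and a continuity/connectedness argument as $p_2$ moves within a fixed component $E_1^\pm(p_1)$ or $E_{-1}^\pm(p_1)$, since the topological type is locally constant off the conics and lines.
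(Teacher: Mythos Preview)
Your outline follows the same broad architecture as the paper: read off the existence of circle domains at $z=\pm 1$ from the sign of $C_{\pm 1}=(p_1\mp 1)(p_2\mp 1)$, read off the local spiralling from $\arg C_{\pm 1}$, and separate the one-strip cases \textbf{6.3(a)}, \textbf{6.3(b1)} from the two-strip case \textbf{6.3(b2)} via a period/height condition that cuts out the confocal conics $L(p_1)\cup H(p_1)$. Two corrections are in order.

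First, a sign slip: a circle domain at $z=1$ requires the leading Laurent coefficient $-C_1/4$ to be real and negative, i.e.\ $C_1>0$, not $C_1<0$. You reach the correct geometric locus $l_1^+(p_1)$ anyway, but the stated condition is inverted; the condition $C_1<0$ is exactly the radial case you use in \textbf{7.B}.

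Second, and more substantively, you have misjudged where the real work lies. The identification of the locus $h_+=h_-$ with $L(p_1)\cup H(p_1)$ is in fact a short algebraic computation: the paper shows $h_\pm=\tfrac12\,\Im\sqrt{C_{\pm1}}$ by integrating $\sqrt{Q}$ around small circles at $z=\pm1$, squares the relation $\Im\sqrt{C_1}=\Im\sqrt{C_{-1}}$, and recognises the result as the product of the ellipse and hyperbola equations \eqref{5.13}--\eqref{5.14}. What is \emph{not} short is the part you dismiss as ``a continuity/connectedness argument'': deciding, within each region $E_{\pm1}^{\pm}(p_1)$, which of $p_1,p_2$ sits on $\partial D_\infty$. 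Local constancy of the topological type is not automatic here, and even granting it you still need a base point. The paper handles this with a genuine limiting argument: it lets $p_2\to -1$, rescales by $z=(s_k+1)\zeta-1$, and shows that if $p_2\in\partial D_\infty$ along the sequence then the trajectories of the rescaled differentials would have $\widehat Q_k$-length simultaneously bounded (by comparison with closed trajectories of the limit differential $\tfrac{|1+p_1|}{4}\tfrac{\zeta-1}{\zeta^2}\,d\zeta^2$) and unbounded (because they must cross an annulus of growing modulus). It then runs a separate open--closed argument on $E_{-1}^+(p_1)$ using carefully constructed $Q^0$-rectangles to propagate the conclusion. Nothing in your sketch---sign of a period, spiralling data, or connectedness---supplies either the base case or the closedness half of that argument, so this step as written is a gap.
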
 %

\noindent %
\emph{Proof}. %
 \textbf{7.A(1).} We have shown in Section~6 that a domain
 configuration $\mathcal{D}_Q$ of the type~\textbf{6.1(c)} occurs if
 and only if $p_2=\bar p_1$. Thus, we have to consider cases \textbf{7.A(2)}
 and \textbf{7.A(3)}.
 We first prove statements about positions of zeros $p_1$ and
 $p_2$ for each of these cases. Then we will turn to statements
 about critical trajectories.

 \textbf{7.A(2).} A domain configuration $\mathcal{D}_Q$ contains
 exactly two circle domains centered at $z=\infty$ and $z=-1$ if
 and only if
 $C_{-1}>0$ and $C_1$ is not a positive real number. This is equivalent to the following conditions:%
\begin{equation} \label{5.2} %
\arg (p_1+1)=-\arg(p_2+1) \quad \mod(2\pi), %
\end{equation} %
\begin{equation} \label{5.3} %
\arg (p_1-1)\not=-\arg(p_2-1) \quad \mod(2\pi). %
\end{equation} %

Geometrically, equations (\ref{5.2}) and (\ref{5.3}) mean that the
points $p_1$ and $p_2$ lie on the rays issuing from the pole
$z=-1$, which are symmetric to each other with respect to the real
axis. Furthermore, each ray contains one of these points and
$p_1\not= \bar p_2$.

Assuming (\ref{5.2}), (\ref{5.3}), we claim that $p_1\in
\partial D_\infty$ if and only if $|p_2+1|<|p_1+1|$.
First we prove that the claim is true for all $p_2$ sufficiently
close to $z=-1$ if $p_1$ is fixed. Arguing by contradiction,
suppose that there is a sequence $s_k\to -1$ such that
$\arg(s_k+1)=-\arg(p_1+1)$ and $p_1\in
\partial D_{-1}^k$, $s_k\in \partial D_\infty^k$  for all $k=1,2,\ldots$ Here
$D_{-1}^k\ni -1$ and $D_\infty^k\ni \infty$ denote the
corresponding circle domains of the quadratic
differential %
 \begin{equation} \label{5.4} %
 Q_k(z)\,dz^2=-\frac{(z-p_1)(z-s_k)}{(z-1)^2(z+1)^2}\,dz^2. %
 \end{equation} %

Changing variables in (\ref{5.4}) via $z=(s_k+1)\zeta-1$ and then
dividing the resulting quadratic differential by
$\delta_k=|s_k+1|$, we obtain the following quadratic
differential: %
\begin{equation} \label{5.5} %
\widehat{Q}_k(\zeta)\,d\zeta^2=\frac{\zeta-1}{\zeta^2}\frac{|1+p_1|-\delta_k^{-1}(s_k+1)^2\zeta}{(2-(s_k+1)\zeta)^2}
\,d\zeta^2. %
\end{equation} %
We note that the trajectories of $Q_k(z)\,dz^2$ correspond under
the mapping $z=(s_k+1)\zeta-1$ to the trajectories of the
quadratic differential $\widehat{Q}_k(\zeta)\,d\zeta^2$. Thus,
$\widehat{Q}_k(\zeta)\,d\zeta^2$ has two circle domains
$\widehat{D}_{k,\infty}\ni \infty$ and $\widehat{D}_{k,0}\ni 0$.
The zeros of $\widehat{Q}_k(\zeta)\,d\zeta^2$ are at the points  %
\begin{equation} \label{5.6} %
\zeta'_k=1\in \partial \widehat{D}_{k,\infty}, \quad
\zeta''_k=\delta_k|1+p_1|(s_k+1)^{-2}\in \partial
\widehat{D}_{k,0}. %
\end{equation} %

From (\ref{5.5}), we find that %
\begin{equation} \label{5.7} %
\widehat{Q}_k(\zeta)\,d\zeta^2\to
\widehat{Q}(\zeta)\,d\zeta^2:=\frac{|1+p_1|}{4}\frac{\zeta-1}{\zeta^2}\,d\zeta^2, %
\end{equation} %
where convergence is uniform on compact subsets of
$\mathbb{C}\setminus\{0\}$. Since %
$$ %
\widehat{Q}(\zeta)=-(|1+p_1|/4)\zeta^{-2}+\cdots \quad {\mbox{ as
$\zeta\to 0$}}%
$$ %
the quadratic differential  $\widehat{Q}(\zeta)\,d\zeta^2$ has a
circle domain $\widehat{D}$ centered at $\zeta=0$. Let
$\widehat{\gamma}$ be a trajectory of
$\widehat{Q}(\zeta)\,d\zeta^2$ lying in $\widehat{D}$ and let
$\hat\gamma_k$ be an arbitrary trajectory of
$\widehat{Q}_k(\zeta)\,d\zeta^2$ lying in the circle domain
$\widehat{D}_{k,0}$.  Since
$\hat\gamma_k$ is a $\widehat{Q}_k$-geodesic in its class and by (\ref{5.7}) we have %
\begin{equation} \label{5.8} %
|\hat\gamma_k|_{\widehat{Q}_k}\le
|\widehat{\gamma}|_{\widehat{Q}_k}\to
|\widehat{\gamma}|_{\widehat{Q}}=|1+p_1|^{1/2} \quad {\mbox {as
$k\to \infty$.}}
\end{equation} %
On the other hand, conditions (\ref{5.6}) imply that for every
$R>1$ there is $k_0$ such that for every $k\ge k_0$ there is an
arc $\tau_k$ joining the circles $\{\zeta:\,|\zeta|=1\}$ and
$\{\zeta:\,|\zeta|=R\}$, which lies on regular trajectory of the
quadratic differential $\widehat{Q}_k(\zeta)\,d\zeta^2$ lying in
the circle domain $\widehat{D}_{k,0}$. Then, using (\ref{5.5}), we
conclude that there is a constant $C>0$ independent on $R$ and
$k$ such that %
$$ %
|\hat\gamma_k|_{\widehat{Q}_k}\ge
|\tau_k|_{\widehat{Q}_k}=\int_{\tau_k}\left|\widehat{Q}_k(\zeta)\right|^{1/2}\,|d\zeta|\ge
C\,\int_1^R \frac{\sqrt{|\zeta|-1}}{|\zeta|}\,d|\zeta| %
$$ %
for all  $k\ge k_0$. Since  $\int_1^R x^{-1}\sqrt{x-1}\,dx\to
\infty$ as $R\to \infty$, the latter equation contradicts equation
(\ref{5.8}). Thus, we have proved that if $p_1$ is fixed and $p_2$
is sufficiently close to $z=-1$ then $p_1\in \partial D_\infty$
and $p_2\in \partial D_{-1}$.

Now, we fix $p_1$ with $\Im p_1\not =0$ and  consider the set $A$
consisting of all points $p'_2$ on the ray $r=\{z: \, \arg(z+1)
=-\arg(p_1+1)\}$ such that $p_1\in
\partial D_\infty(p_1,p_2)$ and $p_2\in \partial
D_{-1}(p_1,p_2)$ for all $p_2\in r$ such that %
$|p_2+1|<|p'_2+1|$. Here $D_\infty(p_1,p_2)$ and $D_{-1}(p_1,p_2)$
are corresponding circle domains of the quadratic differential
(\ref{6.1}). Our argument above shows that $A\not= \emptyset$. Let
$p_2^m\in r$ be such that %
$$ %
|p_2^m+1|=sup_{p_2\in A} \,|p_2+1|. %
$$ %

Consider the quadratic differential $Q(z;p_1,p_2^m)\,dz^2$ of the
form (\ref{6.1}) with $p_2$ replaced by $p_2^m$. Let
$D_\infty(p_1,p_2^m)\ni \infty$ and $D_{-1}(p_1,p_2^m)\ni -1$ be
the corresponding circle domains of $Q(z;p_1,p_2^m)\,dz^2$. Since
the quadratic differential (\ref{6.1}) depends continuously on the
parameters $p_1$ and $p_2$, it is not difficult to show, using our
definition of $p_2^m$, that both zeros of  $Q(z;p_1,p_2^m)\,dz^2$
belong to the boundary of each of the domains $D_{-1}(p_1,p_2^m)$
and $D_\infty(p_1,p_2^m)$. But, as we have shown in part
\textbf{6.2} of Section~6, in this case the domain configuration
of $Q(z;p_1,p_2^m)\,dz^2$ must consist of three circle domains.
Therefore, as we have shown in part~\textbf{6.1} of Section~6, we
must have $p_1^m=\bar p_1$.

Thus, we have shown that $p_2\in \partial D_{-1}$ if $p_1$ and
$p_2$ satisfy (\ref{5.2}) and $|p_2+1|<|p_1+1|$. The M\"{o}bius
map $w=\frac{3-z}{1+z}$ interchanges the poles $z=\infty$ and
$z=-1$ of the quadratic differential (\ref{6.1}) and does not
change the type of its domain configuration. Therefore, our
argument shows also that $p_1\in \partial D_\infty$ if
$|p_2+1|<|p_1+1|$. This complete the proof of our claim  that
$p_1\in
\partial D_\infty$ if and only if $|p_2+1|<|p_1+1|$.

Similarly, if $Q(z)\,dz^2$ has exactly two circle domains
$D_\infty\ni \infty$ and $D_1\ni 1$, then $p_2\in \partial D_1$
and $p_1\in \partial D_\infty$ if and only if %
$$ %
\arg(p_1-1)=-\arg(p_2-1) \ \  \mod 2\pi \quad {\mbox{and}} \quad |p_2-1|<|p_1-1|.  %
$$ %

\smallskip

 \textbf{7.A(3).} In this part, we will discuss cases \textbf{6.3(a)}, \textbf{6.3(b1)}, and \textbf{6.3(b2)} discussed in Section~6.
 A domain configuration $\mathcal{D}_Q$ contains
 exactly one circle domains centered at $z=\infty$  if
 and only if neither $C_1$ or $C_{-1}$  is a positive real number.
 As we have found in Section~6, in this case there exist one or
 two strip domains $G_1$ and $G_2$ having their vertices at the
 poles $z=1$ and $z=-1$. In what follows, we will use the notion
 of the \emph{normalized height} $h$ of a strip domain $G$, which is
 defined as %
 $$ %
 h=\frac{1}{2\pi} \Im\,\int_\gamma \sqrt{Q(z)}\,dz>0, %
 $$ %
 where the integral is taken over any rectifiable arc
 $\gamma\subset G$ connecting the sides of~$G$.

 The sum of \emph{normalized heights }in the $Q$-metric of
 the strip domains, which have a vertex at the pole $z=1$ or at
 the pole $z=-1$ can be found using integration over
 circles $\{z:\,|z-1|=r\}$ and $\{z:\,|z+1|=r\}$ of radius $r$, $0<r<1$,
 as follows: %
 \begin{equation} \label{5.10} %
h_+=\frac{1}{2\pi}\Im \int_{|z-1|=r} \sqrt{Q(z)}\,dz=\frac{1}{2}
\Im\sqrt{C_1}=\frac{1}{2} \Im
\sqrt{(p_1-1)(p_2-1)}  %
 \end{equation} %
if $z=1$ and
 \begin{equation} \label{5.11} %
h_-=\frac{1}{2\pi}\Im \int_{|z+1|=r} \sqrt{Q(z)}\,dz=\frac{1}{2}
\Im\sqrt{C_{-1}}=\frac{1}{2} \Im
\sqrt{(p_1+1)(p_2+1)}  %
 \end{equation} %
if $z=-1$. The branches of the radicals in (\ref{5.10}) and
(\ref{5.11}) are chosen such that $h_+\ge 0$, $h_-\ge 0$.
Also, we assume here that if a strip domain has both vertices at
the same pole then its height is counted twice.

Comparing $h_+$ and $h_-$, we find three possibilities: %
\begin{itemize} %
\item[1)] %
If $h_+=h_-$, then the domain configuration $\mathcal{D}_Q$ has
only one strip domain $G_2$. This is the case discussed in parts
\textbf{6.3(a) }and \textbf{6.3(b1)} in Section~6. %
\item[2)] %
The case $h_+>h_-$ corresponds to the configuration with two strip
domains $G_1$ and $G_2$ discussed in part \textbf{6.3(b2)} in
Section~6.  In this case, the normalized heights $h_1$ and $h_2$
of the strip domains $G_1$ and $G_2$ can be calculated as follows:%
\begin{equation} \label{7.9.1}
h_1=\frac{1}{2}\left(h_+-h_-\right), \quad h_2=h_-.
\end{equation}
\item[3)] %
The case $h_+<h_-1$ corresponds to the configuration with two
strip
domains mentioned in part \textbf{6.3(b3)} in Section~6. %
\end{itemize}

Next, we will identify pairs $p_1$, $p_2$, which correspond to
each of the cases \textbf{6.3(a)}, \textbf{6.3(b1)}, and
\textbf{6.3(b2)}. The domain configuration $\mathcal{D}_Q$ has
exactly one strip domain if and only if $h_+=h_-$. Now,
(\ref{5.10}) and (\ref{5.11}) imply that the latter equation is
equivalent to the
following equation: %
$$%
\begin{array}{l} %
\left(\sqrt{(p_1-1)(p_2-1)}-\sqrt{(\bar p_1-1)(\bar
p_2-1)}\right)^2= \\%
 \left(\sqrt{(p_1+1)(p_2+1)}-\sqrt{(\bar
p_1+1)(\bar p_2+1)}\right)^2. %
\end{array} %
$$ %
Simplifying this equation, we conclude that $h_+=h_-$ if and
only if $p_1$ and $p_2$ satisfy the following equation: %
\begin{equation} \label{5.12} %
p_1+\bar p_1+p_2+\bar p_2+|p_1-1||p_2-1|-|p_1+1||p_2+1|=0 %
\end{equation} %

We claim that for a fixed $p_1$ with $\Im p_1\not=0$, the pair
$p_1$, $p_2$ satisfies equation (\ref{5.12}) if and only if
$p_2\in L(p_1)$ or $p_2\in H(p_1)$. Indeed, $p_2\in L(p_1)$ if and
only if %
\begin{equation} \label{5.13} %
|p_1-1|+|p_1+1|=|p_2-1|+|p_2+1|. %
\end{equation} %
Similarly, $p_2\in H(p_1)$ if and
only if %
\begin{equation} \label{5.14} %
|p_1-1|-|p_1+1|=|p_2-1|-|p_2+1|. %
\end{equation} %
Multiplying equations (\ref{5.13}) and (\ref{5.14}), after
simplification we again obtain equation (\ref{5.12}). Therefore,
$p_2\in L(p_1)$ or $p_2\in H(p_1)$ if and only if the pair $p_1$,
$p_2$ satisfy equation (\ref{5.12}). Thus, $\mathcal{D}_Q$ has
only one strip domain if and only if $p_2\in
L(p_1)\setminus\{p_1,\bar p_1\}$ or $p_2\in
H(p_2)\setminus\{p_1,\bar p_1\}$. This proves the first parts of
statements \textbf{6.3(a)} and \textbf{6.3(b1)}.

Now, we will prove that $p_1\in \partial D_\infty$ for all $p_2\in
E_{-1}^+(p_1)$. First, we claim that $p_1\in
\partial D_\infty$ for all $p_2$ sufficiently close to $-1$.
Arguing by contradiction, suppose that there is a sequence $s_k\to
-1$ such that  $s_k\in \partial D_\infty^k$  for all
$k=1,2,\ldots$ Here $D_\infty^k\ni \infty$ denotes the
corresponding circle domain of the quadratic differential
$Q_k(z)\,dz^2$ having the form (\ref{5.4}).
From (\ref{5.4}) we find that %
$$ 
Q_k(z)\,dz^2\to
\widehat{Q}(z)\,dz^2:=-\frac{z-p_1}{(z+1)(z-1)^2}\,dz^2, %
$$ 
where convergence is uniform on compact subsets of
$\mathbb{C}\setminus\{-1,1\}$. Since the residue of
$\widehat{Q}(z)$ at $z=\infty$ equals $1$, the quadratic
differential  $\widehat{Q}(z)\,dz^2$ has a circle domain
$\widehat{D}_\infty\ni \infty$ and if $\gamma\subset
\widehat{D}_\infty$ is a closed trajectory of
$\widehat{Q}(z)\,dz^2$, then $|\gamma|_{\widehat{Q}}=2\pi$.

Let us  show that the boundary of $\widehat{D}_\infty$ consists of
a single critical trajectory $\widehat{\gamma}_\infty$ of
$\widehat{Q}(z)\,dz^2$, which has both its end points at $z=p_1$.
Indeed, $\partial \widehat{D}_\infty$ consists of a finite number
of critical trajectories of $\widehat{Q}(z)\,dz^2$, which have
their end points at finite critical points. Therefore, if $-1\in
\partial \widehat{D}_\infty$, then $\partial \widehat{D}_\infty$
contains a critical trajectory, call it $\widehat{\gamma}_1$,
which joins $z=-1$ and $z=p_1$. Some notations used in this part
of the proof are shown in Fig.~7a.  This figure shows the limit
configuration, which is, in fact, impossible as we explain below.
In this case, $\partial \widehat{D}_\infty$ must contain a second
critical trajectory, call it $\widehat{\gamma}_2$, which has both
its end points at $z=p_1$. This implies that $z=1$ is the only
pole of $\widehat{Q}(z)\,dz^2$ lying in a simply connected domain,
call it $\widehat{D}_1$, which is bounded by critical
trajectories. Hence, $\widehat{D}_1$ must be a circle domain of
$\widehat{Q}(z)\,dz^2$. Furthermore, the domain configuration
$\mathcal{D}_{\widehat{Q}}$ consists of two circle domains
$\widehat{D}_1$, $\widehat{D}_\infty$, which in this case must be
the extremal domains of Jenkins module problem on the following
maximum of the
sum of reduced moduli: %
$$ %
m(B_\infty,\infty)+t^2 m(B_1,1) \quad {\mbox{with some fixed $t>0$,}} %
$$ %
where the maximum is taken over all pairs of simply connected
non-overlapping domains $B_\infty\ni \infty$ and $B_1\ni 1$. It is
well known that such a pair of extremal domains is unique; see for
example, \cite{S1}. Therefore, $\widehat{D}_1$ and
$\widehat{D}_\infty$ must be symmetric with respect to the real
line (as is shown, for instance, in Fig.~5d), which is not the
case since $\widehat{Q}(z)\,dz^2$ has only one zero $p_1$ with
$\Im p_1>0$.

Thus, $\partial \widehat{D}_\infty=\widehat{\gamma}_\infty\cup
\{p_1\}$ and $z=-1$ lies in the domain complementary to the
closure of  $\widehat{D}_\infty$. Fig.~7b illustrates notations
used further on in this part of the proof.

Let $\tilde \gamma_{-1}$ denote the $\widehat{Q}$-geodesic in the
class of all curves having their end points at $z=-1$, which
separate the points $z=1$ and $z=p_1$ from $z=\infty$. Since
$-1\not \in \partial \widehat{D}_\infty$ it follows that %
\begin{equation} \label{5.16} %
|\tilde
\gamma_{-1}|_{\widehat{Q}}>|\widehat{\gamma}_\infty|_{\widehat{Q}}=2\pi. %
\end{equation} %
Let $\varepsilon>0$ be such that %
\begin{equation} \label{5.17} %
\varepsilon<\frac{1}{4}\left(|\tilde{\gamma}_{-1}|_{\widehat{Q}}-2\pi\right). %
\end{equation} %
Let $r>0$ be sufficiently small such that %
\begin{equation} \label{5.18} %
|[-1,-1+re^{i\theta}]|_{\widehat{Q}}<\varepsilon/8 \quad
{\mbox{for all $0\le \theta<2\pi$.}} %
\end{equation} %
Now let $\tilde \gamma_r$ be the shortest in the
$\widehat{Q}$-metric among all arcs having their end points on the
circle $C_r(-1)=\{z:\,|z+1|=r\}$ and separating the points $z=1$
and $z=p_1$ from the point $z=\infty$ in the exterior of the
circle $C_r(-1)$. It is not difficult to show that there is at
least one such curve $\tilde\gamma_r$. It follows from
(\ref{5.18}) that %
\begin{equation} \label{5.19} %
|\tilde\gamma_r|_{\widehat{Q}}>|\tilde\gamma_{-1}|_{\widehat{Q}}-\varepsilon/4.%
\end{equation} %

Since $s_k\to -1$, $s_k\in \partial D_\infty^k$, and $p_1\not \in
D_\infty^k$, it follows that for every sufficiently large $k$
there is a regular trajectory $\gamma(k)$ of $Q_k(z)\,dz^2$
intersecting the circle $C_r(-1)$ and such that the arc
$\gamma'(k)=\gamma(k)\setminus \{z:\,|z+1|\le r\}$ separates the
points $z=1$ and $z=p_1$ from $z=\infty$ in the exterior of
$C_r(-1)$. Since $|\gamma(k)|_{Q_k}=2\pi$ for all $k$ and since
every quadratic differential $Q_k(z)\,dz^2$ has second order poles
at $z=1$ and $z=\infty$ it follows from (\ref{5.4}) that there is
$r_0>0$ small enough such that $\gamma'(k)$ lies on the compact
set $K_0=\{z:\,|z|\le
1/r_0\}\setminus\left(\{z:\,|z-1|<r_0\}\cup\{z:\,|z+1|<r\}\right)$
for all $k$ sufficiently large. We note also that $Q_k(z)\to
\widehat{Q}(z)$ uniformly on $K_0$. This implies, in particular,
that for all $k$ the Euclidean lengthes of
$\gamma'(k)$ are bounded by the same constant and that %
\begin{equation} \label{5.20} %
|\gamma'(k)|_{Q_k}\ge |\gamma'(k)|_{\widehat{Q}}-\varepsilon/4
\end{equation} %
for all $k$ sufficiently large.

Combining (\ref{5.16})--(\ref{5.20}), we obtain the following
relations: %
$$ 
\begin{array}{rl}
2\pi=&|\gamma(k)|_{Q_k}\ge |\gamma'(k)|_{Q_k}\ge
|\gamma'(k)|_{\widehat{Q}}-\varepsilon/4\ge
|\tilde\gamma_r|_{\widehat{Q}}-\varepsilon/4 \\ %
>&|\tilde\gamma_{-1}|_{\widehat{Q}}-\varepsilon/2>|\tilde\gamma_{-1}|_{\widehat{Q}}
-\frac{1}{2}\left(|\tilde\gamma_{-1}|_{\widehat{Q}}-2\pi\right)     \\ %
 =&
\frac{1}{2}\left(|\tilde\gamma_r|_{\widehat{Q}}+2\pi\right)>2\pi,
\end{array}
$$ 
which, of course, is absurd. Thus, $p_2\in \partial D_\infty$ for
all $p_2$ sufficiently close to $-1$.

\smallskip

Let  $\Delta\not= \emptyset$ be the set of all $p_2\in
E_{-1}^+(p_1)$ such that $p_1\in \partial D_\infty$.  To prove
that $\Delta=E_{-1}^+(p_1)\setminus\{-1\}$, it is  sufficient to
show that $\Delta$ is closed and open in $E_{-1}^+(p_1)$. Arguing
by contradiction, we suppose that there is a sequence of poles
$s_k:=p_2^k\in E_{-1}^+(p_1)$, $k=1,2,\ldots,$ such that $s_k\to
s_0:=p_2^0\in E_{-1}^+(p_1)$ and $p_1\in
\partial D_\infty^k$ for all $k=1,2,\ldots$ but $p_1\not\in \partial D_\infty^0$.
In this part of the proof, the index $k=0,1,2,\ldots$, used in the
notations  $D_\infty^k$, $\tilde\gamma_k$, etc., will denote
domains, trajectories, and other objects corresponding to the
quadratic differential $Q_k(z)\,dz^2$ defined by (\ref{5.4}).
Since $\partial D_\infty^0$ contains a critical point and
$p_1\not\in \partial D_\infty^0$, we must have  $p_2^0\in
\partial D_\infty^0$.

Fig.~7c illustrates some notations used in this part of the proof.
In this case, the boundary $\partial D_\infty^0$ consists of a
single critical trajectory $\gamma_\infty^0$ and its end points,
each of which is at $z=p_2^0$. In addition, there is a critical
trajectory of infinite $Q^0$-length, called it $\hat\gamma$, which
has one end point at $p_2^0$ and which approaches to the pole
$z=-1$ or the pole $z=1$ in the other direction. Let $P_0$ be a
point on $\hat \gamma$ such that the $Q^0$-length of the arc
$\hat\gamma_0$ of $\hat\gamma$ joining $p_2^0$ and $P_0$ equals
$L$, where $L>0$ is sufficiently large. For $\delta>0$
sufficiently small, let $\gamma_1^\perp$ and $\gamma_2^\perp$
denote disjoint open arcs on the orthogonal trajectory of
$Q^0(z)\,dz^2$ passing through $P_0$ such that each of
$\gamma_1^\perp$ and $\gamma_2^\perp$ has one end point at $P_0$
and each of them has $Q^0$-length equal to $\delta$. If $\delta$
is small enough, then there is an arc of a  trajectory of
$Q^0(z)\,dz^2$, call it $\tilde\gamma$, which  connects the second
end point of $\gamma_1^\perp$ with the second end point of
$\gamma_2^\perp$. Now, let $D(\delta)$ be the domain, the boundary
of which consists of the arcs $\gamma_\infty^0$, $\hat\gamma_0$,
$\gamma_1^\perp$, $\gamma_2^\perp$, and their end points. In the
terminology explained in Section~3, the domain $D(\delta)$ is a
$Q^0$-rectangle of $Q^o$-height $\delta$.

 If
$\delta>0$ is sufficiently small, then $p_1$ belong to the bounded
component of $\mathbb{C}\setminus \overline{D(\delta)}$. Let
$\tilde\gamma_1$ be the arc of a trajectory of $Q^0(z)\,dz^2$,
which divide $D(\delta)$ into two $Q^0$-rectangles, each of which
has the $Q^0$-height equal to $\delta/2$.  Since $p_1\in \partial
D_k$ for all $k$ and $p_1$ belongs to the bounded component of
$\mathbb{C}\setminus \overline{D(\delta)}$, it follows that, for
each $k=1,2,\ldots,$ there is a closed trajectory $\hat\gamma_k$
of $Q_k(z)\,dz^2$ lying in $D_\infty^k$, which intersects
$\tilde\gamma_1$ at some point $\tilde z_k\in D(\delta)$.

Since $Q_k(z)\to Q^0(z)$ it follows that, for all sufficiently
large $k$, the trajectory $\hat\gamma_k$ has an arc
$\tilde\gamma_k$ such that $\tilde\gamma_k\subset D(\delta)$ and
$\tilde\gamma_k$ has one end point on each of the arcs
$\gamma_1^\perp$ and $\gamma_2^\perp$.

Now, since $Q_k(z)\to Q^0(z)$ uniformly on $\overline{D(\delta)}$
 it follows that %
 $$ 
|\hat\gamma_k|_{Q_k}\ge |\tilde\gamma_k|_{Q_k}\to
|\tilde\gamma_1|_{Q^0}=|\gamma_\infty^0|_{Q^0}+2|\hat\gamma_0|_{Q^0}=2\pi+2L,
 $$ 
 contradicting to the fact that $|\hat\gamma_k|_{Q_k}=2\pi$. The latter fact
 follows from the assumption that $\hat \gamma_k$ is a closed
 trajectory of $Q_k(z)\,dz^2$, which lies in a circle domain $D_\infty^k$.

Thus, we have proved that $\Delta$ is closed in $E_{-1}^+(p_1)$. A
similar  argument can be used to show that $\Delta$ is open in
$E_{-1}^+(p_1)$. The difference is that to construct a domain
$D(\delta)$, we now use an arc $\tilde\gamma_1$ of a critical
trajectory $\hat \gamma_1$, which has one of its end points at the
pole $p_1$ and not at the pole $p_1^0$ as we had in the previous
case.

Therefore, we have proved that if $p_2\in E_{-1}^+(p_1)$, then
$p_1\in
\partial D_\infty$. The same argument can be used to prove that if
$p_2\in E_1^+(p_1)$, then $p_1\in \partial D_\infty$.

Finally, if $p_2\in E_1^-(p_1)$ or $p_2\in E_{-1}^-(p_1)$, then we
can switch the roles of the poles $p_1$ and $p_2$ in our previous
proof and conclude that $p_2\in \partial D_\infty$ in these cases.
This proves the first part of statement \textbf{6.3(b2)}.

\bigskip

Now, possible positions of zeros $p_1$ and $p_2$ on boundaries of
the corresponding circle and strip domains are determined for all
cases. Next, we will discuss limiting behavior of critical
trajectories. We will give a proof for the most general case when
the domain configuration consists of a circle  domain $D_\infty$
and strip domains $G_1$ and $G_2$. In all other cases proofs are
similar.

Let $\Delta$ denote the set of pairs $(p_1,p_2)$, for which the
limiting behavior of critical trajectories is shown in Fig.~4a or
 in more general case in Fig.~4b. That is when $\gamma_1$ joins $p_1\in
\partial D_\infty\cap \partial G_1$ and $z=1$, $\gamma_{-1}$ joins
$p_2\in
\partial G_1\cap \partial G_2$ and $z=-1$, and $\gamma_0^+$ and
$\gamma_0^-$ each joins $p_2$ and $z=1$. First, we note that
$\Delta$ is not empty since $(p_1,p_2)\in \Delta$ when  $p_1>1$
and $-p_1<p_2<-1$.  In this case the intervals $(p_2,-1)$ and
$(1,p_1)$ represent critical trajectories $\gamma_1$ and
$\gamma_{-1}$ and critical trajectories $\gamma_0^+$ and
$\gamma_0^-$ connect a zero at $p_2$ with a pole at $z=1$; see
Fig.~4a.

We claim that $\Delta$  is open. To prove this claim, suppose that
$(p_1^0,p_2^0)\in \Delta$ and that $(p_1^k,p_2^k)\to
(p_1^0,p_2^0)$ as $k\to \infty$, $k=1,2,\ldots$ Fix $\varepsilon>$
small enough and consider the arc
$\gamma_1^0(\varepsilon)=\gamma_1^0\setminus
\{z:\,|z-1|<\varepsilon\}$ of the critical trajectory
$\gamma_1^0$, which goes from $p_1^0$ to the pole $z=1$. Since
$(p_1^k,p_2^k)\to (p_1^0,p_2^0)$ it follows that for all $k$
sufficiently big there is a critical trajectory $\gamma_1^k$
having one point at $p_1^k$ which has a subarc
$\gamma_1^k(\varepsilon)$ which lies in the
$\varepsilon/10$-neighborhood of the arc
$\gamma_1^0(\varepsilon)$. In particular, eventually,
$\gamma_1^k(\varepsilon)$ enters the disk
$\{z:\,|z-1|<\varepsilon\}$. Therefore, it follows from the
standard continuity argument and Lemma~4 that $\gamma_1^k$
approaches the pole $z=1$. The same argument works for all other
critical trajectories of the quadratic differential (\ref{6.1})
with $p_1=p_1^k$, $p_2=p_2^k$. Thus, we have proved that $\Delta$
is open.

Same argument can be applied to show that all other sets of points
$(p_1,p_2)$ responsible for different types of limiting behavior
of critical trajectories mentioned in part \textbf{6.3(b2)} of
Theorem~\ref{Theorem 5.1} are also nonempty and open. The latter
implies that each of these sets must coincide with some connected
component of the set $\mathbb{C}\setminus (L(p_1)\cup H(p_1))$.
This proves the desired statement in the case under consideration.


\textbf{7.B.} The local behavior of trajectories near second order
poles at $z=1$ and $z=-1$ is controlled by Laurent coefficients
$C_1$ and $C_{-1}$, respectively, which are given by formula
(\ref{5.1}). The radial structure near $z=1$ or near $z=-1$ occurs
if and only if $C_1<0$ or $C_{-1}<0$, respectively. The latter
inequalities are equivalent to the following relations: %
\begin{equation} \label{5.23} %
\arg(p_1-1)=-\arg(p_2-1)+\pi %
\end{equation} %
or %
\begin{equation} \label{5.24} %
\arg(p_1+1)=-\arg(p_2+1)+\pi. %
\end{equation} %
Now, statement  \textbf{(1)} about radial behavior follows from
(\ref{5.23}) and (\ref{5.24}).

Next, trajectories of $Q(z)\,dz^2$ approaching the pole $z=1$
spiral clockwise if and only if $0<\arg C_1<\pi$. The latter is
equivalent to the inequalities: %
$$ %
-\arg{p_1-1}<\arg(p_2-1)<-\arg(p_1-1)+\pi, %
$$ %
which imply the desired statement for   the case when trajectories
of $Q(z)\,dz^2$ approaching $z=1$ spiral clockwise.  In the
remaining cases the proof is similar.

The proof of Theorem~\ref{Theorem 5.1} is now complete. %
\hfill $\Box$

\begin{remark} \label{Remark-1} %
The case when $\Im p_1=0$ but $\Im p_2\not=0$ can  be reduced to
the case covered by Theorem~\ref{Theorem 5.1} by changing
numeration of zeros. In the remaining case when $\Im p_1=0$ and
$\Im p_2=0$, the domain configurations are rather simple; they are
symmetric with respect to the real axis as it is shown in
Figures~1a, 1b, 2a, 3a, and some other figures.
\end{remark} %

\section{Identifying simple critical    geodesics and critical loops}  \label{Section-7}
\setcounter{equation}{0}

Topological information obtained in Section~6 is sufficient to
identify all critical geodesics and all critical geodesic loops of
the quadratic differential (\ref{6.1}) in all cases. In
particular, we can identify all simple geodesics.

Cases \textbf{6.1(a)} and \textbf{6.1(b)}; see Fig.~1a and
Fig.~1b. Let $\gamma$ be a geodesic joining $p_1$ and $p_2$. Since
$D_\infty$, $D_1$, and $D_{-1}$ are simply connected and $p_1\in
\partial D_\infty\cap
\partial D_1$ and $p_2\in \partial D_\infty\cap \partial D_{-1}$
it follows from Lemma~4 that $\gamma$ does not intersect
$D_\infty$, $D_1$, and $D_{-1}$. In this case, $\gamma$ must be
composed of a finite numbers of copies of $\gamma_0$, a finite
number of copies of $\gamma_1$, and a finite number of copies of
$\gamma_{-1}$. Therefore the only simple geodesic joining $p_1$
and $p_2$ in this case is the segment $\gamma_0=[p_2,p_1]$.

In addition, by Lemma~\ref{Lemma-4.2}, $\gamma_1$ is the only
simple non-degenerate geodesic from the point $p_1$ to itself and
$\gamma_{-1}$ is the only short geodesic from $p_2$ to $p_2$.


Case \textbf{6.1(c)};  see Fig.~1c. %
As in the previous case, any geodesic $\gamma$ joining $p_1$ and
$p_2$ must be composed of a finite number of copies of $\gamma_0$,
a finite number of copies of $\gamma_1$, and a finite number of
copies of $\gamma_{-1}$. Thus, in this case there exist exactly
three simple geodesics joining $p_1$ and $p_2$, which are
$\gamma_0$, $\gamma_1$, and $\gamma_{-1}$. By
Lemma~\ref{Lemma-4.2}, there are no geodesic loops in this case.

\smallskip

Case \textbf{6.2}; see Fig.~2a, 2b. %
Suppose that $\mathcal{D}_Q$ consists of circle domains $D_\infty$
and $D_{-1}$ and
 a strip domain $G_1$. Let $\gamma$ be a geodesic joining $p_1$
 and $p_2$. If $\gamma$ contains a point $\zeta\in \gamma_{-1}$ or
 a point $\zeta\in \gamma_\infty$, then it follows from
 Lemma~\ref{Lemma-4.1}  that $\gamma_{-1}$ or, respectively, $\gamma_\infty$ is a subarc
 of $\gamma$. Thus, $\gamma$ is not simple in these cases.

 Suppose now that $\gamma\subset G_1\cup \gamma_1^+\cup
 \gamma_1^-$.
 Since $G_1$ is a strip domain the function $w=F(z)$ defined by %
 \begin{equation} \label{4.1} %
 F(z)=\frac{1}{2\pi}\,\int_{p_1}^z \sqrt{Q(z)}\,dz, %
 \end{equation} %
 with an appropriate  choice of the radical, maps $G_1$ conformally and
 one-to-one onto the horizontal strip $S_{h_1}$, where $S_h=\{w:\,0<\Im
 w<h_1\}$,
  in such a way
 that the trajectory $\gamma_\infty$ is mapped onto an interval
 $(x_1,x_1')\subset \mathbb{R}$ with $x_1=0$ and  $x'_1=1$. Here
 $h_1$ is the normalized height of the strip domain $G_1$ defined
 by (\ref{7.9.1}).
 Fig.~8a and Fig.~9a illustrate some notions relevant to Case
 \textbf{6.2}.
To simplify notations in our figures, we will use the same
notations for $Q$-geodesics (such as $\gamma_\infty$,
$\gamma_{11}$, $\gamma'_{12}$, etc.) in the $z$-plane and for
their images under the mapping $w=F(z)$ in the $w$-plane.

 \smallskip

 The indefinite integral $\Phi(z)=\frac{1}{2\pi}\int\sqrt{Q(z)}\,dz$ can be
 expressed explicitly in terms of elementary functions as follows:%
 \begin{equation} \label{7.9.2} %
\begin{array}{ll} %
\hspace{-0.2cm}\Phi(z)\hspace{-0.25cm}&=\frac{1}{4\pi i}\left(
\sqrt{(p_1-1)(p_2-1)}\log(z-1)-\sqrt{(p_1+1)(p_2+1)}\log(z+1)
\right. \\ %
{}&+4\log(\sqrt{z-p_1}+\sqrt{z-p_2})\\
{}&+2\sqrt{(p_1+1)(p_2+1)}  \log(\sqrt{(p_1+1)(z-p_2)}-\sqrt{(p_2+1)(z-p_1)}) \\ %
{}&-\left. 2\sqrt{(p_1-1)(p_2-1)}
\log(\sqrt{(p_1-1)(z-p_2)}-\sqrt{(p_2-1)(z-p_1)})\right). %
\end{array} %
 \end{equation} %
Equation (\ref{7.9.2}) can be  verified by straightforward
differentiation. Alternatively, it can be verified with
\emph{Mathematica} or \emph{Maple}.  With
(\ref{7.9.2}) at hands, the function $F(z)$ can be written as %
\begin{equation} \label{7.9.3} %
F(z)=\Phi(z)-\Phi(p_1), %
\end{equation} %
where %
\begin{equation} \label{7.9.4} %
\Phi(p_1)=\frac{1}{4\pi
i}\left(2+\sqrt{(p_1-1)(p_2-1)}-\sqrt{(p_1+1)(p_2+1)}\right)\log(p_1-p_2).
\end{equation} %
Calculating $\Phi(p_2)$,   
after some algebra, we find that: %
\begin{equation} \label{7.9.5} %
F(p_2)=\frac{1}{2}+\frac{1}{4}\left(\sqrt{(p_1-1)(p_2-1)}-\sqrt{(p_1+1)(p_2+1)}\right).
\end{equation} %
Of course, all branches of the radicals and logarithms in
(\ref{7.9.2})--(\ref{7.9.5}) have to be appropriately chosen.

To explain more precisely our choice of branches of multi-valued
functions in (\ref{7.9.2})--(\ref{7.9.5}), we note that the points
$p_1$, $p_2$ and points of the arcs $\gamma_1^+$ and $\gamma_1^-$
each represents two distinct boundary points of $G_1$ and
therefore every such point has two images under the mapping
$F(z)$. These images will be denoted by $x_1(\zeta)$ and
$x'_1(\zeta)$ if $\zeta\in \gamma_1^+\cup\{p_1\}$ and by
$x_2(\zeta)+ih_1$ and $x'_2(\zeta)+ih_1$ if $\zeta\in
\gamma_1^-\cup \{p_2\}$. We assume here that
$x_1(\zeta)<x'_1(\zeta)$ for all $\zeta\in \gamma_1^+\cup \{p_1\}$
and $x_2(\zeta)<x'_2(\zeta)$ for all $\zeta\in \gamma_1^-\cup
\{p_2\}$. In accordance with our notation above, $x_1(p_1)=x_1=0$
and $x'_1(p_1)=x'_1=1$. We also will abbreviate $x_2(p_2)$ and
$x'_2(p_2)$ as $x_2$ and $x'_2$, respectively.

For every $\zeta\in \gamma_1^+$, the segments $[x_1(\zeta),x_1]$
and $[x'_1,x'_1(\zeta)]$ are the images of the same arc on
$\gamma_1^+$. Therefore they have equal lengthes. Similarly, the
segments $[x_2(\zeta)+ih_1,x_2+ih_1]$ and
$[x'_2+ih_1,x'_2(\zeta)+ih_1]$ have equal
lengthes. Thus, for every $\zeta\in \gamma_1^+$ and every $\zeta\in \gamma_1^-$, we have, respectively: %
\begin{equation} \label{4.2} %
x_1-x_1(\zeta)=x'_1(\zeta)-x'_1 \quad {\mbox{and}} \quad
x_2-x_2(\zeta)=x'_2(\zeta)-x'_2. %
\end{equation} %

We know that the preimage under the mapping $F(z)$ of every
straight line segment is a geodesic. This immediately implies that
in the case under consideration there exist four simple critical
geodesics, which are the following preimages: %
\begin{equation} \label{4.3} %
\begin{array}{ll} \gamma_{12}=F^{-1}((x_1,x_2+ih_1)),& \quad
\gamma'_{12}=F^{-1}((x_1,x'_2+ih_1)),\\ %
\gamma_{21}=F^{-1}((x'_1,x_2+ih_1)),& \quad
\gamma'_{21}=F^{-1}((x'_1,x'_2+ih_1)). %
\end{array} %
\end{equation}%
The geodesic loops $\gamma_\infty$ and $\gamma_{-1}$ are the
following preimeges: %
\begin{equation} \label{4.4} %
\gamma_\infty=F^{-1}((x_1,x_1)), \quad
\gamma_{-1}=F^{-1}((x_2+ih_1,x'_2+ih_1)). %
\end{equation}%

We claim that there is no other simple geodesic joining the points
$p_1$ and $p_2$. Fig.~9a illustrates some notation used in the
proof of this claim. Suppose that $\tau$ is a geodesic ray issuing
from $p_1$ into the region $G_1$. Let $\tau_k$, $k=1,\ldots, N$,
be connected components of the intersection $\tau\cap G_1$
enumerated in their natural order on $\tau$. In particular,
$\tau_1$ starts at $p_1$. We may have finite or infinite number of
such components. Thus, $N$ is  a finite number or $N=\infty$. Let
$l_k=F(\tau_k)$. Since all $\tau_k$ lie  on the same geodesic it
follows that $l_k$ are parallel line intervals in $S$ joining the
real axis and the horizontal line $L_{h_1}$, where $L_h =\{w:\,
\Im w=h\}$. Let $v'_k$ and $v''_k$ be the initial point and
terminal point of $l_k$, respectively. Then $v'_k=e'_k$ and
$v''_k=e''_k+ih_1$ with real $e'_k$ and $e''_k$ if $k$ is odd and
$v'_k=e'_k+ih_1$, $v''_k=e''_k$ with real $e'_k$ and $e''_k$ if
$k$ is even.

The interval $l_1$ may start at $x_1$ or at $x'_1$. To be
definite, suppose that $e'_1=x_1$. For the position of $e''_1$ we
have the
following possibilities: %

\begin{enumerate} %
\item[(a)] %
$e''_1=x_2$ or $e''_1=x'_2$. In this case, $\tau_1=\gamma_{12}$ or
$\tau_1=\gamma'_{12}$. Thus we obtain two out of four
geodesics  in (\ref{4.3}). %
\item[(b)] %
$x_1<e''_1<x'_1$. In this case, $\tau_1$ has its end point on
$\gamma_{-1}$. By Lemma~\ref{Lemma-4.1}, the continuation of
$\tau_1$ as a geodesic will stay in $D_{-1}$ and will approach to
the pole $z=-1$. Thus, $\tau$ is not  a geodesic from $p_1$ to
$p_2$ or a geodesic loop from $p_1$ to itself in
this case. %
\item[(c)] %
$e''_1>x'_2$. Let $d=e''_1-x'_2$. It follows from (\ref{4.2}) that
$e'_2=x_2-d$. Then $e''_2=x_1-d$. In general,
$e'_{2k-1}=x'_1+(k-1)d$, $e''_{2k-1}=x'_2+kd$ for $k=1,2,\ldots$,
and $e'_{2k}=x_2-kd$, $e''_{2k}=x_1-kd$ for $k=1,2,\ldots$. Thus,
$\tau$ cannot terminate at $p_1$ or $p_2$. Instead, $\tau$
approaches to the pole at $z=1$ as a logarithmic spiral.  %
\item[(d)] %
$e''_1<x_2$. Let $d_0=x_2-e''_1$. Then $e'_2=x'_2+d_0$ by
(\ref{4.2}). For the position of $e''_2$ we have three
possibilities. %
\begin{itemize} %
\item[$(\alpha$)] %
$x_1<e''_2<x'_1$. In this case by Lemma~\ref{Lemma-4.1}, the
continuation of $\tau_2$ as a geodesic ray will stay in $D_\infty$
and will approach to the pole $z=\infty$. Thus, $\tau$ is not  a
geodesic from $p_1$ to $p_2$ or a geodesic loop in this
case. %
\item[$(\beta$)] %
$e''_2=x'_1$. In this case, $\tau$ is a critical geodesic loop
$\gamma_{11}=F^{-1}((x_1,v''_1]\cup [v'_2,x'_1))$ from $p_1$ to
itself. We emphasize here, that since the segments $l_1$ and $l_2$
are parallel a critical geodesic loop from $p_1$ to itself occurs
if and only if
 $|\gamma_\infty|_Q=x'_1-x_1>x'_2-x_2=|\gamma_{-1}|_Q$. If $|\gamma_\infty|_Q<|
 \gamma_{-1}|_Q$, then there is a critical geodesic loop $\gamma_{22}$ with end points at $p_2$.%
\item[$(\gamma$)] %
$e''_2>x'_1$. Let $d=e''_2-x'_1$. Then, as in the case c), we
obtain that $e'_{2k+1}=x_1-kd$, $e''_{2k+1}=x_2-d_0-kd$ for
$k=1,2,\ldots$, and $e'_{2k}=x'_2+d_0+kd$, $e''_{2k}=x'_1+kd$ for
$k=1,2,\ldots$. Therefore, $\tau$ does not terminate at $p_1$ or
$p_2$. Instead, $\tau$ approaches to the pole at $z=1$ as a
logarithmic spiral. %
\end{itemize} %
\end{enumerate} %

If $l_1$ has its initial point at $x'_1$, the same argument shows
that there are exactly two geodesics joining $p_1$ and $p_2$,
which are the geodesics $\gamma_{21}$ and $\gamma'_{21}$ defined
by (\ref{4.3}).

\smallskip

Combining our findings for  Case \textbf{6.2}, we conclude that in
this case there exist exactly four distinct geodesics joining
$p_1$ and $p_2$, which are given by (\ref{4.3}). The geodesic
loops $\gamma_\infty$ and $\gamma_{-1}$ are given by (\ref{4.4}).
In addition, if $|\gamma_\infty|_Q\not=|\gamma_{-1}|_Q$, then
there is exactly one geodesic loop containing the pole $z=1$ in
its interior domain, which has its end points at a zero of
$Q(z)\,dz^2$. This loop has the pole $z=1$ in its interior domain,
which does not contain other critical points of $Q(z)\,dz^2$, and
has both its end points at $p_1$ or at $p_2$, if
$|\gamma_\infty|_Q>|\gamma_{-1}|_Q$ or
$|\gamma_\infty|_Q<|\gamma_{-1}|_Q$, respectively.

Finally, if $|\gamma_\infty|_Q=|\gamma_{-1}|_Q$, then the
geodesics $\gamma_{12}$ and $\gamma'_{21}$ together with points
$z=p_1$ and $p_2$ form a boundary of a simply connected bounded
domain, which contains the pole $z=1$ and does not contain other
critical points of $Q(z)\,dz^2$. There are no geodesic loops
containing $z=1$ in its interior domain in this case.

\smallskip

The argument based on the construction of parallel segments
divergent to $\infty$, which was used above to prove non-existence
of some geodesics, will be used for the same purpose in several
other cases considered below. Since the detailed construction is
rather lengthy, the detailed exposition will be given for one more
case when we have two strip domains. In other cases, we  will just
refer to this argument (which  actually is rather standard, see
\cite[Ch. IV]{Str}) and call it the ``proof by
 construction of divergent geodesic segments''.

\smallskip

Case \textbf{6.3(a)}; see Fig.~8b. In this case, the domain
configuration $\mathcal{D}_Q$ consists of a circle domain
$D_\infty$ and a strip domain $G_2$ having its vertices at the
poles $z=1$ and $z=-1$. The function $F(z)$ defined by (\ref{4.1})
maps $G_2$ conformally and one-to-one onto the strip $S_{h_1}$
such that  the trajectory $\gamma_\infty^+$ is mapped onto the
interval $(x_1,x_2)\subset \mathbb{R}$ with $x_1=0$ and some
$x_2$, $0<x_2<1$. The points $z=p_1$ and $z=p_2$ each has two
images under the mapping $F(z)$. Let $x_1=0$ and $x'_1+ih_1$ with
some real $x'_1$ be the images of $p_1$ and let $x_2$ and
$x'_2+ih_1$ with  $x'_2=x'_1+(1-x_2)$ be the images of $p_2$.
Arguing as in Case \textbf{6.2}, one can easily find four distinct
simple geodesics joining the points $p_1$ and $p_2$. These
geodesics are: %
$$ 
\begin{array}{ll} %
\gamma_{12}=F^{-1}((x_1,x_2))=\gamma_\infty^+, &\quad
\gamma'_{12}=F^{-1}((x'_1+ih_1,x'_2+ih_1))=\gamma_\infty^-, \\
\gamma_{21}=F^{-1}((x_1,x'_2+ih_1)), &\quad
\gamma'_{21}=F^{-1}((x_2,x'_1+ih_1)).
\end{array} %
$$ 
In addition, there are two critical geodesic loops:
$$ 
\gamma_{11}=F^{-1}((x_1,x'_1+ih_1)) \quad {\mbox{and}} \quad
\gamma_{22}=F^{-1}((x_2,x'_2+ih_1)). %
$$ 
It follows from Lemma~\ref{Lemma-4.2} that there are no other such
loops.

Using the proof by construction of divergent geodesic segments as
in Case \textbf{6.2}, we can show that there are no other simple
geodesics joining $p_1$ and $p_2$.

Case \textbf{6.3(b1)}; see Fig.~8c.  We still have a circle domain
$D_\infty$ and a strip domain $G_2$. In this case, the function
$F(z)$ defined by (\ref{4.1}) as in Case \textbf{6.2} maps $G_2$
conformally and one-to-one onto $S_{h_1}$ such that
$\gamma_\infty$ is mapped onto the interval $(x_1,x'_1)\subset
\mathbb{R}$, where $x_1=0$ and $x'_1=1$. The difference is that
that now the point $p_2$ represents three boundary points of
$G_2$. Two of them belong to the side $l_2$ and the third point
belongs to the side $l_1$. Accordingly, there are three images of
$p_2$ under the mapping $F(z)$, which we will denote by
$x_2+ih_1$, $x'_2$, and $x''_2$. Here $x_2$ may be any real number
while $x'_2$ and $x''_2$ satisfy the following conditions: %
$$ 
x'_2>x'_1, \quad x''_2<x_1, \quad {\mbox{and}} \quad x'_2-x'_1=x_1-x''_2. %
$$ 

In this case, there are three short geodesics, which are the
following
preimages: %
$$ %
\gamma_0=F^{-1}((x''_1,x_1))=F^{-1}((x'_1,x'_2)) %
$$ %
and %
$$ %
\gamma_{12}=F^{-1}((x_1,x_2+ih_1)), \quad
\gamma'_{12}=F^{-1}((x'_1,x_2+ih_1)). %
$$ %
In addition, there are three geodesic loops: %
$$ %
\gamma_\infty=F^{-1}((x_1,x'_1)),
 \quad \gamma'_{22}=F^{-1}((x_2+ih_1,x'_2)), \quad
\gamma''_{22}=F^{-1}((x_2+ih_1,x''_2)). %
$$ %

Using the proof by construction of divergent segments as above, it
is not difficult to show that there are no other simple geodesics
joining the points $p_1$ and $p_2$.

Case \textbf{6.3(b2)}. This is the most general case with many
subcases illustrated in Fig.~10a-10i. In this case we have a
circle domain $D_\infty$ and two strip domains $G_1$ and $G_2$. We
assume that $\mathcal{D}_Q$ has topological type  shown in
Fig.~4b. In other cases the proof follows same lines. The function
$F(z)$ defined by (\ref{4.1}) maps $G_1$ conformally and
one-to-one onto the strip $S_{h_1}$ such that $\gamma_\infty$ is
mapped onto the interval $(x_1,x'_1)\subset \mathbb{R}$, where
$x_1=0$ and $x'_1=1$. The point $p_2$ represents one boundary
point of $G_1$ and two boundary points of $G_2$. Let $x_2+ih_1$ be
the image of $p_2$ considered as a boundary point of $G_1$. Then
the trajectory $\gamma_0^+$ considered as boundary arc of $G_1$ is
mapped onto the ray $r_1=\{w=t+ih_1:\,t<x_2\}$, while the
trajectory $\gamma_0^-$ is mapped onto the ray
$r_2=\{w=t+ih_1:\,t>x_2\}$. The function $F(z)$ can be continued
analytically through the trajectory $\gamma_0^+$. The continued
function (for which we keep our previous notation $F(z)$) maps
$G_2$ conformally and one-to-one onto the strip $S(h_1,h)= \{w:\,
h_1<\Im w<h\}$ with $h=h_1+h_2$, where $h_1$ and $h_2$ are defined
by (\ref{7.9.1}).
%
Two boundary points of $G_2$ situated at $p_2$ are mapped onto the
points $x_2+ih_1$ and $x'_2+ih$ with some $x'_2\in \mathbb{R}$.
Thus, the domain $\widetilde{D}=G_1\cup G_2\cup \gamma_0^+$ is
mapped by $F(z)$ conformally and one-to-one onto the slit strip
$\widehat{S}(h_1,h)=\{w: \, 0<\Im w <h\}\setminus
\{w=t+ih_1:\,t\ge x_2\}$.

We note that every boundary point $\zeta\in
\gamma_1\cup\gamma_{-1}\cup \gamma_0^-$ under the mapping $F(z)$
has two images $w_1(\zeta)$ and $w_2(\zeta)$, which satisfy the
following conditions similar to conditions (\ref{4.2}):  %
\begin{equation} \label{4.8} %
x_1-w_1(\zeta)=w_2(\zeta)-x'_1>0 \quad {\mbox{if $\zeta\in
\gamma_1$}}, %
\end{equation} %
\begin{equation} \label{4.9} %
w_1(\zeta)=u_1(\zeta)+ih, \ \ w_2(\zeta)=u_2(\zeta)+ih_1, %
\end{equation} %
where $x'_2-u_1(\zeta)=u_2(\zeta)-x_2>0$ if $\zeta\in \gamma_0^-$,
and %
$$ 
w_1(\zeta)=u_1(\zeta)+ih, \ \ w_2(\zeta)=u_2(\zeta)+ih_1,%
$$ 
where $u_1(\zeta)-x'_2=u_2(\zeta)-x_2>0$ if $\zeta\in
\gamma_{-1}$.

Consider four straight lines $P_k$, $k=1,2,3,4$, where $P_2$
passes through $x'_1$ and $x_2+ih_1$, $P_3$ passes through $x_1$
and $x_2+ih_1$, $P_1$ passes through $x_1$ and is parallel to
$P_2$, and $P_4$ passes through $x'_1$ and is parallel to $P_3$.
Let $u_k+ih$ denote the point of intersection of $P_k$ and the
horizontal line $L(h)$, where $L(m)$ stands for the line
$\{w:\,\Im w=m\}$. Then the points $u_k+ih$, $k=1,2,3,4$, are
ordered in the positive direction on $L(h)$; see Fig.~10a.

Next, we consider five possible  positions for $x'_2$, which
correspond to ``non-degenerate'' cases and four positions
corresponding to ``degenerate'' cases. Fig.~10a--10i illustrate
our constructions of critical geodesics and critical geodesic
loops in all these cases. First, we will work with non-degenerate
cases, which are cases (a), (c), (e), (g), and (i)
and after that we will briefly mention degenerate cases (b), (d), (f), and (h). %
\begin{itemize} %
\item[(a)] %
$x'_2<u_1$. Then the slit strip $S_1$ contains four intervals:
$(x_1,x_2+ih_1)$, $(x'_1,x_2+ih_1)$, $(x_1,x'_2+ih)$, and
$(x'_1,x'_2+ih)$. Therefore the preimages of these intervals under
the mapping
$F(z)$ provide four distinct geodesics joining the points $p_1$ and $p_2$: %
\begin{equation} \label{4.11}%
\begin{array}{ll} %
\gamma_{12}=F^{-1}((x_1,x_2+ih_1)), &\quad
\gamma'_{12}=F^{-1}((x'_1,x_2+ih_1)), \\
\gamma_{21}=F^{-1}((x_1,x'_2+ih)), &\quad
\gamma'_{21}=F^{-1}((x'_1,x'_2+ih)). \\
\end{array} %
\end{equation} %
In addition, there are two critical geodesic  loops: %
\begin{equation} \label{4.12} %
\gamma_\infty=F^{-1}((x_1,x'_1)) \quad {\mbox{and }} \quad
\gamma_{22}=F^{-1}((x_2+ih_1,x'_2+ih)). %
\end{equation} %
The curve $\gamma_{22}\cup\{p_2\}$ bounds a simply connected
domain, call it $D_{-1}$, which contains the trajectory $\gamma_2$
and the pole $z=-1$.

One more critical geodesic loop can be found as follows. Let $P_5$
be the line through $x'_2+ih$ that is parallel to $P_1$ and let
$u'_5$ be the point of intersection of $P_5$ with the real axis.
It follows from elementary geometry that there exists a point
$u_5$, $u'_5<u_5<x_1$ such that the  line segments $[x'_2+ih,u_5]$
and $[u_6, x_2+ih_1]$ with $u_6=x'_1+x_1-u_5$ are parallel to each
other. Therefore, it follows from equation (\ref{4.8})  that the
preimage $\gamma'_{22}=F^{-1}((x'_2+ih,u_5]\cup [u_6,x_2+ih_1))$
is a geodesic loop from $p_2$ to $p_2$ containing the pole $z=1$
in its interior domain.

We claim that there no other simple critical geodesics in this
case. The proof is by the method of construction of divergent
geodesic segments. An example of such construction for the case
under consideration is shown in Fig.~9b.

Suppose that $\tau$ is a geodesic ray issuing from $p_1$ into the
region $\widetilde{G}$. Let $\tau_k$, $k=1,\ldots,N$, where $N$ is
a finite integer or $N=\infty$, be connected component of
$\tau\cap \widetilde{G}$ enumerated in the natural order on
$\tau$. Let $l_k=F(\tau_k)$ and let $e'_k$ and $e''_k$ be the
initial and terminal points of $l_k$, respectively.

The interval $l_1$ may start at $x_1$ or at $x'_1$. To be
definite, assume that $e'_1=x_1$. Then for $e''_1$ we have the
following cases: %
\begin{itemize} %
\item[$(\alpha$)] %
$e''_1=x'_2-d_1+ih$ with some $d_1>0$,

\item[$(\beta$)] %
$e''_1=x'_2+d_1+ih$ with some $d_1>0$,
\item[$(\gamma$)] %
$e''_1=x_2+d_1+ih_1$ with some $d_1>0$. %
\end{itemize} %

We give a proof for the case $\alpha$). In two other case the
proof is similar. By (\ref{4.9}), $e'_2=x_2+d_1+ih_1$ and
$e''_2>x'_1$. Let $d=e''_2-x'_1$. Continuing, we find the
following expressions for
the end points of the segments $l_k$: %
$$ %
\begin{array}{ll} %
e'_{2k-1}=x_1+(k-1)d, &\quad e''_{2k-1}=x'_2+d_1+(k-1)d+ih,\\
e'_{2k}=x_2+d_1+(k-1)d+ih_1, &\quad e''_{2k}=x'_1+kd. %
\end{array} %
$$ %
Thus, in this case $\tau$ cannot terminate at $p_2$. Instead, it
approaches to the pole $z=1$ as a logarithmic spiral. %

\item[(c)] %
$u_1<x'_2<u_2$. In this case we still have geodesics (\ref{4.11})
and loops (\ref{4.12}). The only difference is that we cannot
construct the loop $\gamma'_{22}$ as in part (a). Instead, we can
construct a loop $\gamma'_{11}$ from $p_1$ to $p_1$. Indeed, using
elementary geometry, we easily find that there is a point $u_7+ih$
with $u_7<x'_2$ such that the segments $[x_1,u_7+ih]$ and
$[u_8+ih_1,x'_1]$ with $u_8=x_2+x'_2-u_7$ are parallel. Therefore
using (\ref{4.9}), we conclude that
$\gamma'_{11}=F^{-1}((x_1,u_7+ih]\cup [u_8+ih_1,x_1))$ is a
critical geodesic loop.

\item[(e)] %
$u_2<x'_2<u_3$. We still have geodesics $\gamma_{12}$,
$\gamma'_{12}$, and $\gamma_{21}$ given by (\ref{4.11}) and the
loops $\gamma_\infty$, $\gamma_{22}$, and $\gamma'_{11}$ as in the
case c). But the geodesic $\gamma'_{21}$ in (\ref{4.11}) should be
replaced with a geodesic constructed as follows. From elementary
geometry we find that there is $u_9>x_2$ such that the segments
$[x'_1,u_9+ih_1]$ and $[u_{10}+ih,x_2+ih_1]$ with
$u_{10}=x'_2-u_9+x_2$ are parallel. Using (\ref{4.9}), we conclude
that the arc $\gamma'_{21}=F^{-1}((x'_1,u_9+ih_1]\cup
[u_{10}+ih,x_2+ih_1))$ is a geodesic from $p_1$ to $p_2$.

\item[(g)] %
$u_3<x'_2<u_4$. The geodesics $\gamma_{12}$, $\gamma'_{12}$, and
$\gamma'_{21}$ and all three critical geodesic loops can be
constructed as in part (e). The geodesic $\gamma_{21}$ in this
case can be constructed as follows. Using elementary geometry one
can find that there is $u_{11}>x_2$ such that the segments
$[x_1,u_{11}+ih_1]$ and $[u_{12}+ih,x_2+ih_1]$ with
$u_{12}=x'_2+x_2-u_{11}$ are parallel. Using (\ref{4.9}) we
conclude that the arc $\gamma_{21}=F^{-1}((x_1,u_{11}+ih_1]\cup
[u_{12}+ih,x_2+ih_1))$ is a geodesic from $p_1$ to $p_2$.

\item[(i)] %
$x'_2>u_4$. The geodesics from $p_1$ to $p_2$ can be constructed
as in case (g). Of course, we still have loops (\ref{4.12}). The
third geodesic critical loop can be obtained as follows. For
$u_{13}<x_1=0$, let $l^1$ be the line segment joining the real
axis and the line $L(h)$, which has its initial point at
$z=u_{13}$ and passes through $z=x_2+i$. Let $z=u_{14}+ih$ be the
terminal point of $l^1$ on $L(h)$. We consider only those values
of $u_{13}$, for which $u_{14}<x'_2$. Let $d=x'_2-u_{14}$ and let
$l^2$ be a line segment joining the real axis and $L(h_1)$, which
is parallel to $l^1$ and has its initial point at $u_{15}=x'_1+d$.
Let $z=u_{16}+ih_1$ be the terminal point of $l^2$ on $L(h_1)$. It
follows from elementary geometry that
we can find a unique value  of $u_{13}$ such that for this value %
$u_{16}-x_2=x'_2-u_{14}$.

It follows from our construction and from the identification
properties (\ref{4.8}) and (\ref{4.9}) that the preimage %
$$ %
\gamma'_{22}=F^{-1}([u_{13},x_2+ih_1)\cup(x_2+ih_1,u_{14}+ih]\cup[u_{15},u_{16}+ih_1]) %
$$ %
is a geodesic loop from the point $p_2$ to itself. In addition,
this loop contains the pole $z=1$ in its interior, which does not
contain other critical points.

\end{itemize} %

Now we consider four ``degenerate'' cases. %
\begin{enumerate} %
\item[(b)] %
If $x'_2=u_1$, then we still have critical geodesics (\ref{4.11})
and critical geodesic
 loops (\ref{4.12}). But there is no critical geodesic loop
 separating the pole $z=1$ from other critical points. Instead,
 the boundary of a simply connected domain having $z=1$ inside
 and bounded by critical geodesics will consist of geodesics
 $\gamma'_{12}$ and $\gamma_{22}$.
\item[(d)] %
 If $x'_2=u_2$, then we have all critical geodesic loops and
 geodesics $\gamma_{12}$, $\gamma'_{12}$, and $\gamma_{21}$ as in
 the case $u_1<x'_2<u_2$ but instead of geodesic $\gamma'_{21}$
 we have a non-simple geodesic, which is the union
 $\gamma'_{12}\cup \gamma_{22}$.

\item[(f)] %
 If $x'_2=u_3$, then we have all critical geodesic loops
and geodesics $\gamma_{12}$, $\gamma'_{12}$, and $\gamma'_{21}$ as
in the case $u_2<x'_2<u_3$ but instead of geodesic $\gamma_{21}$
we have a non-simple geodesic, which is the union
$\gamma_{12}\cup\gamma_{22}$.

\item[(h)] %
 If $x'_2=u_4$, then we have all geodesics and loops
$\gamma_\infty$, $\gamma_{22}$ constructed as in the case
$u_3<x'_2<u_4$ but instead of the loop $\gamma'_{11}$ we will have
non-simple critical geodesic separating the pole $z=1$ from all
other critical points. This non-simple critical geodesic is the
union $\gamma_{12}\cup \gamma'_{21}$.

\end{enumerate} %

Using the proof by construction of divergent geodesic segments one
can show that in all cases considered above there are no any other
critical
geodesics or critical geodesic loops.%

\medskip

Quadratic differentials defined by formula (\ref{6.1}) depend on
four real parameters which are real parts and imaginary parts of
zeroes $p_1$ and $p_2$. As the reader may noticed in the generic
case configurations shown in Figures~10 also depend on four real
parameters which are  $x_2$, $x'_2$, $h_1$, and $h$. This is not a
coincidence; in fact, the set of pairs $(p_1,p_2)$ is in a
one-to-one correspondence with the set of these diagrams. To
explain how this one-to-one correspondence works, we will show
three basic steps. To be definite, we assume that the domain
configuration consists of a circle domain $D_\infty$ and strip
domains $G_1$ and $G_2$. Thus, we will consider diagrams shown in
Figures~10.

\begin{enumerate} %
\item[$\bullet$] %
As we described above, for any given $p_1$ and $p_2$, the function
$F(z)$ defined by (\ref{4.1}) maps $G_1$ and $G_2$ onto horizontal
strips shown in Figures~10. Furthermore, for fixed $p_1$ and
$p_2$, the values of the parameters  $x_2$, $x'_2$, $h_1$, and $h$
are uniquely defined via function  $F(z)$.
\item[$\bullet$] %
To prove that different pairs $(p_1,p_2)$ define different
diagrams, we argue by contradiction. Suppose that mappings
$F_1(z)$ and $F_2(z)$ constructed by formula (\ref{4.1}) for
distinct pairs $(p_1^1,p_2^1)$ and $(p_1^2,p_2^2)$ produce
identical diagrams  of the form shown in Figures~10. Then the
composition $\varphi=F_1^{-1}\circ F_2$ is well-defined and
defines a one-to-one meromorphic mapping from
$\overline{\mathbb{C}}$ onto itself. Since $\varphi(1)=1$,
$\varphi(-1)=-1$, and $\varphi(\infty)=\infty$ we conclude that
$\varphi$ is the identity mapping. Thus, $\varphi(z)
\equiv z$ and therefore $p_1^1=p_1^2$ and $p_2^1=p_2^2$. %
\item[$\bullet$] %
Now, we want to show that every diagram of the form shown in
Fig.~10a--10i corresponds via  a mapping defined by formula
(\ref{4.1}) to a quadratic differential of the form (\ref{6.1})
with some $p_1$ and $p_2$.

To show this, we will construct a compact Riemann surface
$\mathcal{R}$ using identification of appropriate edges of the
diagram. For more general quadratic differentials, similar
construction was used in \cite{S2}.

\smallskip

To be definite, we will give detailed construction for the diagram
shown in Fig.~10a. In all other cases constructions of an
appropriate Riemann surface follow same lines.
 Consider a domain $\Omega$ defined by %
$$ %
\begin{array}{ll} %
\Omega =&\{w:\,x_1<\Re w<x'_1,\, \Im w\le 0\} \cup  \\ %
{}&\{w:\,0<\Im
w<h\} \setminus \{w=t+ih_1:\,t\ge x_2\}. %
\end{array} %
$$ %
Thus, $\Omega$ is a slit horizontal strip shown in Fig.~10a with a
vertical half strip $\{w:\,x_1<\Re w<x'_1,\, \Im w\le 0\}$
attached to this horizontal strip along the interval $(x_1,x'_1)$;
see Fig.~11. To construct  a Riemann surface $\mathcal{R}$
mentioned above, we
identify boundary points of $\Omega$ as follows: %
\begin{equation} \label{8.01}%
\begin{array}{rll} %
iy &\simeq 1+iy & {\mbox{for $y\le 0$,}} \\
-x&\simeq 1+x & {\mbox{for $x\ge 0$,}} \\ %
 x+x_2+i(h_1-0)&\simeq -x+x'_2+ih&
{\mbox{for $x\ge 0$,}} \\ %
 x+x_2+i(h_1+0)&\simeq x+x'_2+ih&
{\mbox{for $x\ge 0$.}}  %
\end{array} %
\end{equation} %

After identifying points by rules (\ref{8.01}), we obtain a
surface, which is homeomorphic to a complex sphere
$\overline{\mathbb{C}}$ punctured at three points. These punctures
correspond boundary points of $\Omega$ situated at $\infty$. One
puncture corresponds to the point of $\partial \Omega$, we call it
$b_1$, which is  accessible along the path $\{z=\frac{1}{2}+it\}$
as $t\to -\infty$. Second puncture corresponds to a point $b_2$ in
$\partial \Omega$, which is  accessible along the path
$\{z=t+i\frac{h_1+h}{2}\}$ as $t \to \infty$. The third puncture
corresponds to two  boundary points of $\Omega$; one of them, we
call it $b_3^1$,  is accessible along the path $\{z=t+ih_1\}$ as
$t\to -\infty$ and the other one, we call it $b_3^2$,  is
accessible along the path $\{z=t+\frac{h_1}{2}\}$ as $t \to
\infty$. Adding these three punctures, we obtain a compact surface
$\mathcal{R}$ which is homeomorphic to a sphere
$\overline{\mathbb{C}}$.

Next, we introduce a complex structure on $\mathcal{R}$ as
follows. Every point of $\mathcal{R}$ corresponding to a point of
$\Omega$ inherits its complex structure from $\Omega$ as a subset
of $\mathbb{C}$. A point of $\mathcal{R}$ corresponding to $iy$
inherits its complex structure from two half-disks
$\{z:\,|z-iy|<\varepsilon, -\pi/2\le \arg(z-iy)\le \pi/2\}$ and
$\{z:\,|z-(1+iy)|<\varepsilon, \pi/2\le \arg(z-iy)\le 3\pi/2\}$.
Similarly, every point of $\mathcal{R}$ corresponding to a finite
boundary point of $\Omega$, except  those which corresponds to the
points $x_1$, and $x_2+ih_1$, inherits its complex structure from
the corresponding boundary half-disks.

Now we assign complex charts for five remaining special points.
For a point $x_1\simeq x'_1$ a complex chart can be assigned as
follows: %
\begin{equation} \label{8.02} %
\zeta=\left\{ \begin{array}{ll} %
(w-1)^{\frac{2}{3}} & {\mbox{if $|w-1|<\varepsilon$, $0\le \arg w
\le \frac{3\pi}{2}$,}} \\ %
(-w)^{\frac{2}{3}} & {\mbox{if $|w|<\varepsilon$,
$-\frac{\pi}{2}\le \arg w \le \pi$,}} %
\end{array} %
\right.
\end{equation} %
where the branches of the radicals are taken such that
$\zeta(w)>0$ when $w$ is real such that $w>1$ or  $w<0$.

Similarly, to assign a complex chart to a point $x_2+ih_1\simeq
x'_2+i h$, we use the following mapping: %

\begin{equation} \label{8.03} %
\zeta=\left\{ \begin{array}{ll} %
(w-(x_2+ih_1))^{\frac{2}{3}} & {\mbox{if
$|w-(x_2+ih_1)|<\varepsilon$,}} \\ %
{} &{\ \ \mbox{ $0\le \arg (w-(x_2+ih_1))
\le 2\pi$,}} \\ %
(w-(x'_2+i h))^{\frac{2}{3}} & {\mbox{if
$|w-(x'_2+ih)|<\varepsilon$,}} \\ %
{} & {\ \ \ \mbox{$\pi\le \arg (w-(x'_2+ih)) \le 2\pi$,}} %
\end{array} %
\right.
\end{equation} %
with appropriate branches of the radicals.

To a point of $\mathcal{R}$ corresponding to an infinite  boundary
point
$b_1$, a complex chart can be assigned via the function %
\begin{equation} \label{8.04} %
\zeta=\exp(-2\pi i w) \quad {\mbox{for $w$ such that $0\le \Re
w\le 1$, $\Im w<0$,}} %
\end{equation} %
which maps the half-strip $\{w:\,0\le \Re w\le 1,\, \Im w<0\}$
onto the unit disc punctured at $\zeta=0$. This mapping respects
the first identification rule in (\ref{8.01}) and  the origin
$\zeta=0$  represents the point $b_1$.

To assign a complex chart to a puncture corresponding to a pair of
boundary points $b_3^1$ and $b_3^2$, we will work with horizontal
half-strips $H_3^1$ and $H_3^2$ defined as follows. The boundary
of  $H_3^1$ consists of two horizontal rays $\{w:\,w=t:\,t\ge
u_6\}$ and $\{w=t+ih_1:\,t\ge x_2\}$ and a line segment
$[u_6,x_2+ih_1]$; the boundary of $H_3^2$ consists of two
horizontal rays $\{w:\,w=t:\,t\le u_5\}$ and $\{w=t+ih:\,t\le
x'_2\}$ and a line segment $[u_5,x'_2+ih]$. To construct a
required chart, we rotate the half-strip $H_3^1$ by angle $\pi$
with respect to the point $w=1/2$ and then we glue the result to
the half-strip $H_3^2$ along the interval $(-\infty,u_5)$.  As a
result, we obtain a wider half-strip $\widetilde{H}_3$  the
boundary of which consists of horizontal rays
$\{w=t+ih:\,t<x'_2\}$ and $\{w=t-ih_1:\, t<1-x_2\}$ and a line
segment $[1-x_2-ih_1,x'_2+ih]$. After that we map an obtained
wider half-strip $\widetilde{H}_3$ conformally onto the unit disk
in such a way that horizontal rays are mapped onto appropriate
logarithmic spirals. The conformal mapping just described can be
expressed explicitly in the following form: %
\begin{equation} \label{8.05} %
\zeta=\left\{ \begin{array}{ll} %
\exp(2\pi i C_3(1-u_5-w))  & {\mbox{if $w\in H_3^1$,}} \\ %
\exp(2\pi i C_3w) & {\mbox{if $w\in H_3^2$,}} %
\end{array} %
\right.
\end{equation} %
where 
$$ 
C_3=\frac{(x_2+x'_2-1)-i(h+h_1)}{|(x_2+x'_2-1)-i(h+h_1)|^2}. %
$$ 

In a similar way we can assign a complex chart to the puncture
corresponding to the boundary point $b_2$. In this case, we use
the following mapping from the horizontal half-strip $H_2$, the
boundary of which consists of the rays $\{w=t+ih_1:\,t\ge x_2\}$
and $\{w=t+ih:\,t\ge x'_2\}$ and a line segment
$[x_2+ih_1,x'_2+ih]$,
onto the unit disk: %
\begin{equation} \label{8.06} %
\zeta=\exp(-2\pi i C_2(w-(x_2+ih_1))) \quad {\mbox{for  $w\in
H_2$,}}
\end{equation} %
where %
$$ 
C_2=\frac{(x'_2-x_2)-i(h-h_1)}{|(x'_2-x_2)-i(h-h_1)|^2}. %
$$ 
\end{enumerate} %

Now, our compact surface $\mathcal{R}$ with conformal structure
introduced above is conformally equivalent to the Riemann sphere
$\overline{\mathbb{C}}$.  Let $\Phi(w)$ be a conformal mapping
from $\mathcal{R}$ onto $\overline{\mathbb{C}}$ uniquely determined by conditions %
$$ 
\Phi(b_1)=\infty, \quad \Phi(b_2)=1, \quad
\Phi(b_3^1)=\Phi(b_3^2)=-1. %
$$ 

Next, we consider a quadratic differential $\mathcal{Q}(w)\,dw^2$
on $\mathcal{R}$ defined by %
\begin{equation} \label{8.13} %
\mathcal{Q}(w)\, dw^2 =1\cdot dw^2%
\end{equation} %
if $w$ is finite and $w\not= x_1$ and  $w\not=x_2+ih_1$.  This
quadratic differential can be extended to the  points $w=x_1$ and
$w=x_2+ih_1$ as a quadratic differential having simple zeroes at
these points in terms of the local parameters defined by formulas
(\ref{8.02}) and (\ref{8.03}), respectively.

Similarly, using local parameters defined by formulas
(\ref{8.04}), (\ref{8.05}), and (\ref{8.06}), we can extend
quadratic differential (\ref{8.13}) to the points of $\mathcal{R}$
corresponding to the infinite boundary points of $\Omega$ situated
at $b_1$ $b_2$, and $b_3^1\simeq b_3^2$, respectively.

We note that the horizontal strips $\{w:\,0<\Im w<h_1\}$ and
$\{w:\,h_1<\Im w<h\}$ are strip domains of the quadratic
differential (\ref{8.13}), while the half-strip $\{w:\,0\le \Re
w\le 1,\, \Im w<0\}$, which boundary points are identified by the
first rule in (\ref{8.01}), defines a circle domain of this
quadratic differential.

Now, when the quadratic differential (\ref{8.13}) have been
extended to a quadratic differential defined on the whole Riemann
surface $\mathcal{R}$, we may use conformal mapping $z=\Phi(w)$ to
transplant this quadratic differential to get a quadratic
differential $\widehat{Q}(z)\,dz^2$  defined on
$\overline{\mathbb{C}}$. Since critical points of a quadratic
differential are invariant under conformal mapping, it follows
that $\widehat{Q}(z)\,dz^2$ has second order poles at  the points
$z=\infty$, $z=1$ and $z=-1$ and it has simple zeroes at the
images $\Phi(x_1)$ and $\Phi(x_2+ih_1)$ of the points $w=x_1$ and
$w=x_2+ih_1$.

Furthermore, the pole $z=\infty$ belongs to a circle domain of
$\widehat{Q}(z)\,dz^2$  and every trajectory in this circle domain
has length $1$.  Using the above information, we conclude that
$\widehat{Q}(z)\,dz^2=\frac{1}{4\pi^2}Q(z)\,dz^2$, where
$Q(z)\,dz^2$ is given by formula (\ref{6.1}) with $p_1=\Phi(x_1)$
and $p_2=\Phi(x_2+ih_1)$.

Combining our observations made in this section, we conclude the
following:

\emph{Every quadratic differential of the form (\ref{6.1}) having
two strip domains  generates a diagram of the type shown in
Fig.~10a--10i and every diagram of this type corresponds to one
and only one quadratic differential with two strip domains in its
domain configuration of the form (\ref{6.1}).}

\section{How parameters count critical geodesics and critical loops}  \label{Section-9}
\setcounter{equation}{0}

In Section~8, we described $Q$-geodesics corresponding to the
quadratic differential (\ref{6.1}) in terms of Euclidean geodesics
in the $w$-plane. In this section, we explain how this information
can be used to find the number of short geodesics and geodesic
loops for each pair of zeros $p_1$  and $p_2$.

To be definite, we will work with the case \textbf{6.3(b2)} of
Theorem~4 assuming that %
\begin{equation} \label{9.1} %
\Im p_1>0,\quad  {\mbox{and $p_2\in
E_{-1}^+(p_1)$.}} %
\end{equation} %
In all other cases, the number of short geodesics and geodesic
loops can be found similarly.

Under conditions (\ref{9.1}), the domain configuration of the
quadratic differential (\ref{6.1}) consists of domains $D_\infty$,
$G_1$, and $G_2$ as it is shown in Fig.~4a and Fig.~4b and
possible configurations of images of $G_1$ and $G_2$ under the
mapping (\ref{4.1}) are shown in Fig.~10a-10i.

Let $\varepsilon>0$ be sufficiently small and let
$dz_\varepsilon^+$ denote a tangent vector to the trajectory of
the quadratic differential (\ref{6.1}) at $z=1+\varepsilon$, which
can be found from the equation $Q(z)\,dz^2>0$. Using (\ref{7.1.1})
and (\ref{5.1}), we find that %
\begin{equation}  \label{9.2} %
\arg(dz_\varepsilon^+)=\frac{\pi}{2}-\frac{1}{2}\arg
C_1+o(1)=\frac{\pi}{2}-\frac{1}{2} \arg((p_1-1)(p_2-1))+o(1), %
\end{equation} %
where $o(1)\to 0$ as $\varepsilon\to 0$. We assume here that
$-\frac{\pi}{2}\le \arg(dz_\varepsilon^+)\le \frac{\pi}{2}$.

If $1+\varepsilon\in \gamma_1$ then the tangent vector
$dz_\varepsilon^+$ corresponds to the direction on $\gamma_1$ from
$z=1$ to $z=p_1$. Let $\alpha_\varepsilon^+=\alpha^++o(1)$, where
$\alpha^+$  is a constant such that $0\le \alpha^+\le \pi$, denote
the angle formed at the point $1+\varepsilon \in \gamma_1$ by
$dz_\varepsilon ^+$ and the vector $\overrightarrow{v}=-i$, which
is tangent to the circle $\{z:\,|z-1|=\varepsilon\}$ at
$z=1+\varepsilon$. It follows from (\ref{9.2}) that %
\begin{equation} \label{9.3} %
\alpha^+=\pi-\frac{1}{2}\arg C_1=\pi-\frac{1}{2}\arg((p_1-1)(p_2-1)). %
\end{equation} %

Similarly, if $dz_\varepsilon^-$ denote the tangent vector to the
trajectory of the quadratic differential (\ref{6.1}) at
$z=-1+\varepsilon$, then %
\begin{equation}  \label{9.4} %
\arg(dz_\varepsilon^-)=\frac{\pi}{2}-\frac{1}{2}\arg
C_{-1}+o(1)=\frac{\pi}{2}-\frac{1}{2} \arg((p_1+1)(p_2+1))+o(1). %
\end{equation} %

Suppose that $1+\varepsilon\in \gamma_{-1}$ and that
$d_\varepsilon^-$ shows direction on $\gamma_{-1}$ from $z=-1$ to
$z=p_2$. As before we can find constant $\alpha^-$, $0\le \alpha^-
\le \pi$, such that the angle formed at $z=-1+\varepsilon\in
\gamma_{-1}$ by the vectors $dz_\varepsilon^+$ and
$\overrightarrow{v}=-i$ is equal to $\alpha^-+o(1)$, where
$o(1)\to 0$ as $\varepsilon\to 0$ and %
\begin{equation} \label{9.5} %
\alpha^-=\pi-\frac{1}{2}\arg C_{-1}=\pi-\frac{1}{2}\arg((p_1+1)(p_2+1)). %
\end{equation} %

To relate angles $\alpha^+$ and $\alpha^-$ to geometric
characteristics of diagrams in Fig.~10a-10i, we recall that
geodesics are conformally invariant and that for small
$\varepsilon>0$ a geodesic loop $\gamma_\varepsilon^+$ which
passes through the point $z=1+\varepsilon$ and surrounds the pole
$z=1$ is an infinitesimal circle. Therefore the angle formed by
the vector $dz_\varepsilon^+$ and the tangent vector to
$\gamma_\varepsilon^+$ at $z=1+\varepsilon$ equals
$\alpha^++o(1)$.

Similarly, the angle formed by the vector $dz_\varepsilon^-$ and
the tangent vector to the corresponding geodesic loop
$\gamma_\varepsilon^-\ni -1+\varepsilon$ surrounding the pole at
$z=-1$ is equal to $\alpha^-+o(1)$.

Since geodesics are conformally invariant and since conformal
mappings preserve angles, we conclude that trajectories of the
quadratic differential $\mathcal{Q}(w)\,dw^2$  defined in
Section~8 (see formula (\ref{8.13}) ) form angles of opening
$\alpha^+$ or $\alpha^-$ with the images of the corresponding
geodesic loops $\gamma_\varepsilon^+$ or $\gamma_\varepsilon^-$,
respectively. Since the metric defined by the quadratic
differential (\ref{8.13}) is Euclidean, it follows that the
corresponding images of geodesic loops are line segments joining
pairs of points identified by relations (\ref{8.01}).

Using this observation and identification rule $-x+x'_2+ih\simeq
x+x_2+ih_1$, we conclude that the segment $[x_2+ih_1,x'_2+ih]$
forms an angle $\pi-\alpha^-$ with the positive real axis; i.e., %
\begin{equation} \label{9.6} %
\pi-\alpha^-=\arg((x'_2-x_2)+i(h-h_1)). %
\end{equation} %

To find an equation for the angle $\alpha^+$, we will use the
half-strip $\widetilde{H}_3$  constructed at the end of Section~8,
which is related to a conformal mapping defined by
formula~(\ref{8.05}). In this case, $\pi-\alpha^+$ is equal to the
angle formed by the segment $[1-x_2-ih_1,x'_2+ih]$ with the
positive real axis; i.e., %
\begin{equation} \label{9.7} %
\pi-\alpha^+=\arg((x_2+x'_2-1)+i(h+h_1)). %
\end{equation} %

Equating the right-hand sides of equations (\ref{9.3}) and
(\ref{9.4}) to the right-hand sides of equations (\ref{9.7}) and
(\ref{9.6}), respectively, we obtain two equations, which relate
parameters $x_2$, $x'_2$, $h_1$, and $h$. Combining this with
equations (\ref{5.10})--(\ref{7.9.1}), we obtain the following
system of four equations: %
$$ 
\begin{array}{l} %
\arg((x_2+x'_2-1)+i(h+h_1))=\frac{1}{2}\arg ((p_1-1)(p_2-1)) \\ %
\arg((x'_2-x_2)+i(h-h_1))=\frac{1}{2}\arg ((p_1+1)(p_2+1)) \\ %
h_1=\frac{1}{4} \Im \left(\sqrt{(p_1-1)(p_2-1)}-\sqrt{(p_1+1)(p_2+1)}\right) \\ %
h=\frac{1}{4} \Im \left(\sqrt{(p_1-1)(p_2-1)}+\sqrt{(p_1+1)(p_2+1)}\right). %
\end{array}
$$ 
This system of equations can be solved to obtain the following: %
\begin{equation} \label{9.9} %
\begin{array}{l} %
x_2+ih_1=\frac{1}{2}+\frac{1}{4}
\left(\sqrt{(p_1-1)(p_2-1)}-\sqrt{(p_1+1)(p_2+1)}\right), \\ %
x'_2+ih=\frac{1}{2}+\frac{1}{4}
\left(\sqrt{(p_1-1)(p_2-1)}+\sqrt{(p_1+1)(p_2+1)}\right).
\end{array} %
\end{equation} %


Now, when the points $x_2+ih_1$ and $x'_2+ih$ are determined, we
can give explicit conditions on  the zeros $p_1$ and $p_2$ which
correspond to all subcases (a)--(i) of the case \textbf{6.3(b2)}
discussed in Section~8.

\begin{theorem} \label{Theorem-5} %
Suppose that zeros $p_1$ and $p_2$ satisfy conditions (\ref{9.1}).
Then the number of short geodesics and geodesic loops and their
topology are determined by the following inequalities, which
corresponds to the subcases (a)--(i) of Case \textbf{6.3(b2)} described in Section~8 and shown in Fig.~10a--10i: %

Case {\rm{(a)}} with four short geodesics and three critical geodesic loops occurs if the following conditions are satisfied: %
$$ %
\begin{array}{ll} %
0&<\arg(-\frac{1}{2}+\frac{1}{4}(\sqrt{(p_1-1)(p_2-1)}-\sqrt{(p_1+1)(p_2+1)})
\\   %
{}&<\arg(\frac{1}{2}+\frac{1}{4}(\sqrt{(p_1-1)(p_2-1)}+\sqrt{(p_1+1)(p_2+1)})<\pi.  %
\end{array} %
$$ %

Case {\rm{(b)}} with four short geodesics and two critical geodesic loops occurs if the following conditions are satisfied: %
$$ %
\begin{array}{ll} %
0&<\arg(-\frac{1}{2}+\frac{1}{4}(\sqrt{(p_1-1)(p_2-1)}-\sqrt{(p_1+1)(p_2+1)})
\\   %
{}&=\arg(\frac{1}{2}+\frac{1}{4}(\sqrt{(p_1-1)(p_2-1)}+\sqrt{(p_1+1)(p_2+1)})<\pi.  %
\end{array} %
$$ %

Case {\rm{(c)}} with four short geodesics and three critical geodesic loops occurs if the following conditions are satisfied: %
$$ %
\begin{array}{ll} %
0&<\arg(\frac{1}{2}+\frac{1}{4}(\sqrt{(p_1-1)(p_2-1)}+\sqrt{(p_1+1)(p_2+1)})
\\   %
{}&<\arg(-\frac{1}{2}+\frac{1}{4}(\sqrt{(p_1-1)(p_2-1)}-\sqrt{(p_1+1)(p_2+1)})<\pi,  %
\end{array} %
$$ %
$$ %
\begin{array}{ll} %
0&<\arg(-\frac{1}{2}+\frac{1}{4}(\sqrt{(p_1-1)(p_2-1)}-\sqrt{(p_1+1)(p_2+1)})
\\   %
{}&<\arg(-\frac{1}{2}+\frac{1}{4}(\sqrt{(p_1-1)(p_2-1)}+\sqrt{(p_1+1)(p_2+1)})<\pi.  %
\end{array} %
$$ %

Case {\rm{(d)}} with three short geodesics and three critical geodesic loops occurs if the following conditions are satisfied: %
$$ %
\begin{array}{ll} %
0&<\arg(-\frac{1}{2}+\frac{1}{4}(\sqrt{(p_1-1)(p_2-1)}-\sqrt{(p_1+1)(p_2+1)})
\\   %
{}&=\arg(-\frac{1}{2}+\frac{1}{4}(\sqrt{(p_1-1)(p_2-1)}+\sqrt{(p_1+1)(p_2+1)})<\pi.  %
\end{array} %
$$ %

Case {\rm{(e)}} with four short geodesics and three critical geodesic loops occurs if the following conditions are satisfied: %
$$ %
\begin{array}{ll} %
0&<\arg(-\frac{1}{2}+\frac{1}{4}(\sqrt{(p_1-1)(p_2-1)}+\sqrt{(p_1+1)(p_2+1)})
\\   %
{}&<\arg(-\frac{1}{2}+\frac{1}{4}(\sqrt{(p_1-1)(p_2-1)}-\sqrt{(p_1+1)(p_2+1)})<\pi,  %
\end{array} %
$$ %
$$ %
\begin{array}{ll} %
0&<\arg(\frac{1}{2}+\frac{1}{4}(\sqrt{(p_1-1)(p_2-1)}-\sqrt{(p_1+1)(p_2+1)})
\\   %
{}&<\arg(\frac{1}{2}+\frac{1}{4}(\sqrt{(p_1-1)(p_2-1)}+\sqrt{(p_1+1)(p_2+1)})<\pi.  %
\end{array} %
$$ %

Case {\rm{(f)}} with three short geodesics and three critical geodesic loops occurs if the following conditions are satisfied: %
$$ %
\begin{array}{ll} %
0&<\arg(\frac{1}{2}+\frac{1}{4}(\sqrt{(p_1-1)(p_2-1)}-\sqrt{(p_1+1)(p_2+1)})
\\   %
{}&=\arg(\frac{1}{2}+\frac{1}{4}(\sqrt{(p_1-1)(p_2-1)}+\sqrt{(p_1+1)(p_2+1)})<\pi.  %
\end{array} %
$$ %

Case {\rm{(g)}} with four short geodesics and three critical geodesic loops occurs if the following conditions are satisfied: %
$$ %
\begin{array}{ll} %
0&<\arg(\frac{1}{2}+\frac{1}{4}(\sqrt{(p_1-1)(p_2-1)}+\sqrt{(p_1+1)(p_2+1)})
\\   %
{}&<\arg(\frac{1}{2}+\frac{1}{4}(\sqrt{(p_1-1)(p_2-1)}-\sqrt{(p_1+1)(p_2+1)})<\pi,  %
\end{array} %
$$ %
$$ %
\begin{array}{ll} %
0&<\arg(\frac{1}{2}+\frac{1}{4}(\sqrt{(p_1-1)(p_2-1)}-\sqrt{(p_1+1)(p_2+1)})
\\   %
{}&<\arg(-\frac{1}{2}+\frac{1}{4}(\sqrt{(p_1-1)(p_2-1)}+\sqrt{(p_1+1)(p_2+1)})<\pi.  %
\end{array} %
$$ %

Case {\rm{(h)}} with four short geodesics and two critical geodesic loops occurs if the following conditions are satisfied: %
$$ %
\begin{array}{ll} %
0&<\arg(\frac{1}{2}+\frac{1}{4}(\sqrt{(p_1-1)(p_2-1)}-\sqrt{(p_1+1)(p_2+1)})
\\   %
{}&=\arg(-\frac{1}{2}+\frac{1}{4}(\sqrt{(p_1-1)(p_2-1)}+\sqrt{(p_1+1)(p_2+1)})<\pi.  %
\end{array} %
$$ %

Case {\rm{(i)}} with four short geodesics and three critical geodesic loops occurs if the following conditions are satisfied: %
$$ %
\begin{array}{ll} %
0&<\arg(-\frac{1}{2}+\frac{1}{4}(\sqrt{(p_1-1)(p_2-1)}+\sqrt{(p_1+1)(p_2+1)})
\\   %
{}&<\arg(\frac{1}{2}+\frac{1}{4}(\sqrt{(p_1-1)(p_2-1)}-\sqrt{(p_1+1)(p_2+1)})<\pi.  %
\end{array} %
$$ %
\end{theorem}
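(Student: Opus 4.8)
The plan is to translate the combinatorial description of the diagrams in Fig.~10a--10i, obtained in Section~8, into explicit inequalities on $p_1$ and $p_2$ via the explicit formulas \eqref{9.9} for the two marked points $x_2+ih_1$ and $x'_2+ih$. Recall from the construction at the end of Section~8 that each subcase (a)--(i) of Case \textbf{6.3(b2)} is distinguished by the position of the real number $x'_2$ relative to the four abscissas $u_1<u_2<u_3<u_4$, where $u_k+ih$ is the intersection with the horizontal line $L(h)$ of the line $P_k$ through the prescribed pair of points among $x_1=0$, $x'_1=1$, and $x_2+ih_1$. The first step is therefore to express each $u_k$ in terms of $x_2$, $x'_2$, $h_1$, $h$: each $P_k$ is determined by two points with known coordinates, so $u_k$ is an elementary linear expression in those coordinates. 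Thus each dividing condition $x'_2 \lessgtr u_k$ becomes a linear inequality in $x_2, x'_2, h_1, h$.

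The second step is to observe, following the angle computation \eqref{9.6}--\eqref{9.7}, that the conditions $x'_2 \lessgtr u_k$ are precisely comparisons of the arguments of two complex numbers built from $x_2+ih_1$ and $x'_2+ih$. Concretely, $P_2$ passes through $x'_1=1$ and $x_2+ih_1$, so the condition on $x'_2$ relative to $u_2$ is a comparison of $\arg\big((x_2-1)+ih_1\big)$ with $\arg\big((x'_2-1)+ih\big)$ (after the translation by $1$), and similarly $P_3$ through $x_1=0$ gives a comparison involving $\arg(x_2+ih_1)$ and $\arg(x'_2+ih)$; the lines $P_1$ and $P_4$ (parallel copies) produce the remaining two comparisons. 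Using \eqref{9.9}, we have $x_2+ih_1-\tfrac12=\tfrac14\big(\sqrt{(p_1-1)(p_2-1)}-\sqrt{(p_1+1)(p_2+1)}\big)$ and $x'_2+ih-\tfrac12=\tfrac14\big(\sqrt{(p_1-1)(p_2-1)}+\sqrt{(p_1+1)(p_2+1)}\big)$, so after recentering everything at $\tfrac12$ the quantities whose arguments must be compared are exactly $-\tfrac12+\tfrac14(\sqrt{(p_1-1)(p_2-1)}\mp\sqrt{(p_1+1)(p_2+1)})$ and $\tfrac12+\tfrac14(\sqrt{(p_1-1)(p_2-1)}\pm\sqrt{(p_1+1)(p_2+1)})$, which is what appears in the statement. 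Substituting and simplifying each of the nine cases then yields the stated systems of inequalities; one also has to record, for each diagram, whether the count is ``four short geodesics and three loops'' (the generic subcases) or a degenerate count, which was already determined geometrically in Section~8.

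The third step is bookkeeping: for each of the nine subcases one writes down which pair $(P_j,P_k)$ the parameter $x'_2$ falls between (or on, in the degenerate cases b, d, f, h), converts the two resulting abscissa comparisons into the two argument inequalities via the dictionary established above, and pairs this with the geodesic/loop count read off from the corresponding figure. Since the branches of the square roots in \eqref{9.9} are fixed by the normalization $h_1\ge 0$, $h\ge 0$ and by the requirement that the strip domains $G_1$, $G_2$ have positive heights, the arguments in question lie in $(0,\pi)$, which is why every inequality in the statement is framed by $0<\cdots<\pi$.

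The main obstacle is the careful verification that the naive abscissa comparison $x'_2\lessgtr u_k$ is equivalent, under the normalizations in force, to the clean argument inequality as written — in particular keeping track of the recentering at $\tfrac12$, of which of the two sign choices $\mp$ attaches to each of $P_1,P_2,P_3,P_4$, and of the orientation conventions that guarantee the arguments stay in $(0,\pi)$ rather than wrapping around. Once the dictionary between the two $u_k$-comparisons and the two $\arg$-comparisons is pinned down for one representative non-degenerate case (say (a)) and one degenerate case (say (b)), the remaining seven cases follow by the identical substitution, and the geodesic counts are simply imported from Section~8; so the proof is essentially a finite check organized around formulas \eqref{9.9} and the geometry of the four auxiliary lines $P_1,\dots,P_4$.
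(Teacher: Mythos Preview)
Your proposal is correct and follows essentially the same route as the paper: once the explicit formulas \eqref{9.9} for $x_2+ih_1$ and $x'_2+ih$ are in hand, the nine subcases from Section~8 are distinguished by the position of $x'_2$ relative to $u_1,\dots,u_4$, and each comparison $x'_2\lessgtr u_k$ is equivalent (via the cross-product criterion $\Im(\bar z_1 z_2)\gtrless 0$ in the upper half-plane) to the corresponding inequality of arguments of the four complex numbers $(x_2-1)+ih_1$, $x_2+ih_1$, $(x'_2-1)+ih$, $x'_2+ih$, which by \eqref{9.9} are exactly $\pm\tfrac12+\tfrac14(\sqrt{(p_1-1)(p_2-1)}\mp\sqrt{(p_1+1)(p_2+1)})$. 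The paper arrives at \eqref{9.9} through the angle computation \eqref{9.3}--\eqref{9.7} and then states Theorem~5 without a separate proof block, so your bookkeeping is precisely the omitted verification.
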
  %

\section{Some related questions}  \label{Section-9}
\setcounter{equation}{0}

Our results presented in Sections~6-9 provide complete information
concerning critical trajectories and $Q$-geodesic of the quadratic
differential (\ref{6.1}). This allows us to answer many related
questions. As an example,  we will discuss three questions
originated in the study of limiting  distributions of zeros of
Jacobi polynomials.

Below,  we suppose that  $p_1,p_2\in \mathbb{C}$ are fixed. Then
we  consider the family of quadratic differentials $Q_s(z)\,dz^2$
depending on the real parameter $s$, $0\le s<2\pi$, such
that %

\begin{equation} \label{10.1} %
Q_s(z)\,dz^2:=e^{-is}Q(z)\,dz^2=-e^{-is}\frac{(z-p_1)(z-p_2)}{(z-1)^2(z+1)^2}\,dz^2. %
\end{equation} %


\begin{enumerate} %

\item[1)] %
 For how many values of
$s$, $0\le s<2\pi$,  the quadratic differential $Q_s(z)\,dz^2$ has
a trajectory  loop with end points at $p_1$ and for how many
values of $s$
$Q_s(z)\,dz^2$ has a trajectory  loop with end points at $p_2$? %

\item[2)] %
 For how many values of
$s$, $0\le s<2\pi$, the corresponding quadratic differential
$Q_s(z)\,dz^2$ has a short critical trajectory? %

\item[3)] %
 How we can find the values of
$s$, $0\le s<2\pi$, mentioned in questions  stated above?
\end{enumerate} %

To answer these questions we need two simple facts: %
\begin{enumerate} %
\item[(a)] %
First, we note  that $\gamma$ is a short trajectory loop or,
respectively,  a short critical trajectory for the quadratic
differential (\ref{10.1}) with some $s$ if and only if $\gamma$ is
a short geodesic loop or, respectively, a short geodesic joining
points $p_1$ and $p_2$ for the quadratic differential (\ref{6.1}).
Thus, the numbers of values $s$ in question (1) and question (2),
respectively,  are bounded by the number of short geodesic loops
and the number of short geodesics, respectively. In the most
general case with one circle domain and two strip domains, these
short geodesic loops and short geodesics were described in
Theorem~\ref{Theorem-5} and their images under the canonical
mapping were shown in Fig.~10a-10i. Of course, one value of $s$
can correspond to more than one short geodesic loop and more than
one short geodesic.

\item[(b)] %
To find the values of $s$ in question~3), we use the following
observation. If $l$ is a straight line segment in the image domain
$\Omega$ forming an angle $\alpha$, $0\le \alpha<\pi$, with the
direction of the positive real axis, then $l$ is an image under
the canonical mapping (\ref{4.1})  of an arc of a trajectory of
the quadratic differential (\ref{10.1})
with %
\begin{equation} \label{10.2} %
s=2\alpha.
\end{equation} %
\end{enumerate} %

We will use (\ref{10.2}) to find values of $s$ which turn short
geodesic loops and short geodesics into short trajectory loops and
short trajectories, respectively. It is convenient to introduce
notations $\alpha_\infty$, $\alpha_{12}$, $\alpha'_{12}$,
$\alpha_{22}$, $\alpha'_{22}$, $\alpha''_{22}$, and so on, to
denote the angles formed by corresponding geodesics
$\gamma_\infty$, $\gamma_{12}$, $\gamma'_{12}$, $\gamma_{22}$,
$\gamma'_{22}$, $\gamma''_{22}$, and so on (considered in the
$w$-plane) with the positive direction of the real axis.
Furthermore, we will use notations  ${\mathcal{A}}(6.1)$,
${\mathcal{A}}(6.1(a))$, ${\mathcal{A}}(6.2)$,
${\mathcal{A}}(6.3(a))$, ${\mathcal{A}}(6.3(b1))$,
${\mathcal{A}}(6.3(b2)(a))$, and so on, to denote the sets of all
angles introduced above in the cases under consideration; i.e. in
the cases $\mathbf{6.1}$, $\mathbf{6.2}$, $\mathbf{6.3(a)}$,
$\mathbf{6.3(b_1)}$, $\mathbf{6.3(b2)}(a)$, and so on.

Now, we are ready to answer questions stated above. We proceed
with two steps. First, we identify the type of domain
configuration $\mathcal{D}_Q$. This will provide us with the first
portion of necessary information. We recall that in general there
are at most three geodesic loops centered at $z=\infty$, $z=1$,
and $z=-1$. Thus, the maximal number of values $s$ in question~1)
is at most three. Then we identify which of the schemes
corresponds to the parameters $p_1$, $p_2$ (in the most general
case these schemes are shown in Fig.~10a-10i). This will provide
us with the remaining portion of necessary information.

$\bullet$ \ Suppose that $\mathcal{D}_Q$ has type~\textbf{6.1}.
Then  we already have three circle domains and therefore $s=0$ is
the only value for which $Q_sz)\,dz^2$ may have short trajectory
loops. In case \textbf{6.1(a)}, we have short trajectory loops
centered at $z=1$ and $z=-1$ and no other such loops. In case
\textbf{6.1(b)} with $1<p_2<p_1$ (respectively with $p_1<p_2<-1$),
we have short trajectory loops centered  at $z=\infty$ and $z=1$
(respectively, at $z=\infty$ and $z=-1$). In case \textbf{6.1(c)},
there are no short geodesic loops.

As concerns short critical trajectories for domain configuration
of type \textbf{6.1}, again $s=0$ is the only value for which
there are such trajectories. This follows from the fact discussed
in Section~8 that in case~\textbf{6.1} there are no other simple
geodesics joining $p_1$ and $p_2$. In cases \textbf{6.1(a)} and
\textbf{6.1(b)}, there is a single short critical trajectory which
is the interval $\gamma_0=(p_2,p_1)$. In case \textbf{6.1(c)},
there are three short critical trajectories which are arcs
$\gamma_0$, $\gamma_1$, and $\gamma_{-1}$ shown in Fig.~1c.

$\bullet$ \  Next, we consider the case when $\mathcal{D}_Q$ has
type \textbf{6.2}. For $s=0$, we have two short trajectory loops.
As before, we assume that these loops surround points $z=-1$ and
$z=\infty$. In other cases discussion is similar, we just have to
switch roles of the poles of the quadratic differential
(\ref{10.1}).

In this case,
${\mathcal{A}}(6.2)=\{0,\alpha_{11},\alpha_{12},\alpha'_{12},\alpha_{21},\alpha'_{21}\}$.
One more value of $s$, for which we may have a short trajectory
loop (centered at $z=1$) may occur for
$s=2\alpha_{11}=-\arg((1-p_1)(1-p_2))$. If
$|\gamma_\infty|_Q>|\gamma_{-1}|_Q$ then we will have a short
geodesic loop from $p_1$ to $p_1$. This loop corresponds to a
geodesic $\gamma_{11}$ in Fig.~8a.  If
$|\gamma_\infty|_Q<|\gamma_{-1}|_Q$, then we will have a similar
short geodesic loop from $p_2$ to $p_2$. In the case
$|\gamma_\infty|_Q=|\gamma_{-1}|_Q$, we have
$\alpha_{11}=\alpha_{12}=\alpha'_{21}$. In this case, we do not
have the third short geodesic loop. Instead, we have two short
critical trajectories joining $p_1$ and $p_2$.

By (\ref{10.2}), the value of $s$, which corresponds to the third
loop (if it exists) is equal to $2\alpha_{11}$.  As concerns
values of $s$ corresponding to short critical trajectories, in
case \textbf{6.2} with $|\gamma_\infty|_Q\not=|\gamma_{-1}|_Q$ we
have four such values. These values are $2\alpha_{12}$,
$2\alpha'_{12}$, $2\alpha_{21}$, and $2\alpha'_{21}$ (see
Fig.~8a).

If $|\gamma_\infty|_Q=|\gamma_{-1}|_Q$, then there are three
values of $s$, which produce short geodesics from $p_1$ to $p_2$.
Two of these values, $s=2\alpha'_{12}$ and $s=2\alpha_{21}$,
generate one short critical trajectory each. The third value
$s=2\alpha_{12}$ generates two short critical trajectories.

\smallskip

$\bullet$ \  Turning to the most general case \textbf{6.3}, we
will give detailed account for subcases \textbf{6.3(b1)} and
\textbf{6.3(b2)}(i), in all other subcases consideration is
similar.

First, we consider the subcase \textbf{6.3(b1)} when the domain
configuration ${\mathcal{D}}_Q$ consists of one circle domain and
one strip domain; see Fig.~3a--3e.  In this case,
${\mathcal{A}}(6.3(b1))=\{0,\alpha'_{22},\alpha''_{22},\alpha_{12},\alpha'_{12}\}$.
The value $s=0$ generates one short trajectory loop and one short
trajectory. The values  $s=2\alpha'_{22}$ and $s=2\alpha''_{22}$
generate one short trajectory loop each and the values
$s=2\alpha_{12}$ and $s=2\alpha'_{12}$ generate one short
trajectory each.

Let us consider case \textbf{6.3(b2)}(i) shown in Fig.~10i. We
have
${\mathcal{A}}(6.3(b2)(i))=\{0,\alpha_{22},\alpha'_{22},\alpha_{12},\alpha'_{12},\alpha_{21},\alpha'_{21}\}$
where all angles are distinct. The values  $s=0$,
$s=2\alpha_{22}$, and $s=2\alpha'_{22}$ generate  short trajectory
loops $\gamma_{\infty}$, $\gamma_{22}$, and $\gamma''_{22}$,
respectively.  Remaining values  $s=2\alpha_{12}$,
$s=2\alpha'_{12}$, $s=2\alpha_{21}$, $s=2\alpha'_{21}$ generate
short trajectories $\gamma_{12}$, $\gamma'_{12}$, $\gamma_{21}$,
and $\gamma'_{21}$, respectively.

Finally, we note that position of points $x_1$, $x'_1$,
$x_2+ih_1$, and $x'_2+ih$ are given explicitly; see formulas
(\ref{9.9}). Using these formulas one can find explicit
expressions for all angles $\alpha_{12}$, $\alpha'_{12}$,
$\alpha_{21}$, $\alpha'_{21}$, and so on, in all possible cases.

\section{Figures Zoo}
\setcounter{equation}{0} %
\FloatBarrier
This section contains all our figures. For convenience, we divide
the set of all figures in eleven groups.

\medskip

\textbf{I.} Configurations with three circle domains.


\vspace{-5cm} %

\begin{figure}
$$\includegraphics[scale=.65,angle=0]{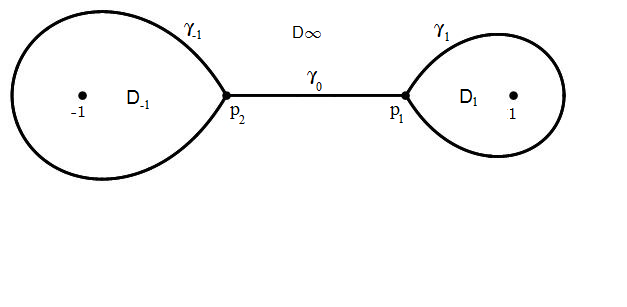} $$
\vspace{-3cm} %
\caption{1a. Three circle domains. Case \textbf{6.1(a)}.} 
\end{figure}


\vspace{3cm}

\medskip %

\begin{figure}
$$\includegraphics[scale=.75,angle=0]{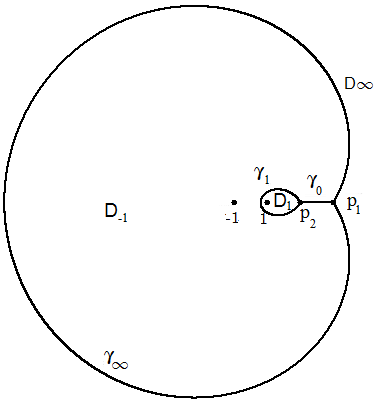} $$
\vspace{-1cm} %
\caption{1b. Three circle domains. Case \textbf{6.1(b)}.} 
\end{figure}

\medskip


\bigskip

\vspace{2cm}

\begin{figure}[b]
$$\includegraphics[scale=.75,angle=0]{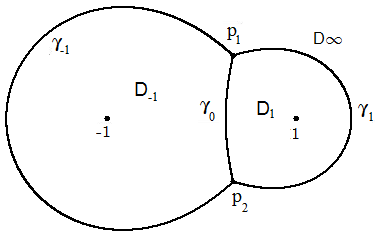} $$
\vspace{-1cm} %
\caption{1c. Three circle domains. Case \textbf{6.1(c)}.} 
\end{figure}

\medskip

\newpage

\FloatBarrier

 \textbf{II.} Configurations with two circle domains.

\begin{figure}[h]
$$\includegraphics[scale=.7,angle=0]{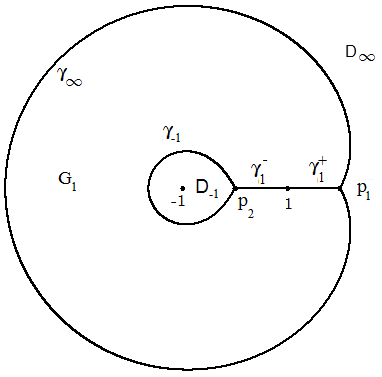} $$
\vspace{-1cm} %
\caption{2a. Two circle domains. Case \textbf{6.2} with symmetric domains.} 
\end{figure}



\begin{figure}[h]
$$\includegraphics[scale=.75,angle=0]{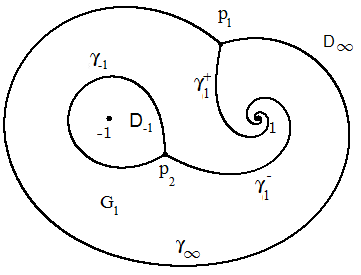} $$
\vspace{-1cm} %
\caption{2b. Two circle domains. Case \textbf{6.2} with non-symmetric domains.} 
\end{figure}

\newpage

\FloatBarrier

 \textbf{III.} Configurations with one circle
domain and one
strip domain. %


\medskip

\begin{figure}
$$\includegraphics[scale=.6,angle=0]{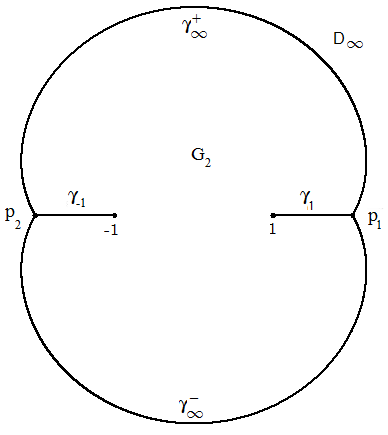} $$
\vspace{-1cm} %
\caption{3a. One circle domain. Case \textbf{6.3(a)} with axial symmetry.} 
\end{figure}


\medskip

\begin{figure}
$$\includegraphics[scale=.7,angle=0]{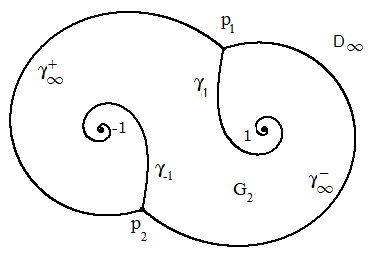} $$
\vspace{-1cm} %
\caption{3b. One circle domain. Case \textbf{6.3(a)} with central symmetry.} 
\end{figure}


\medskip

\begin{figure}
$$\includegraphics[scale=.7,angle=0]{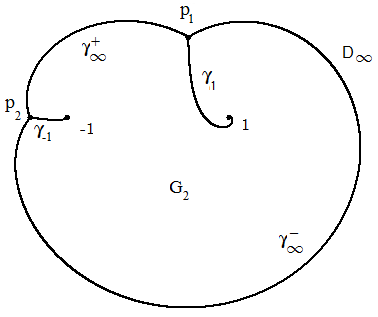} $$
\vspace{-1cm} %
\caption{3c. One circle domain. Case \textbf{6.3(a)} with non-symmetric domains.} 
\end{figure}

\newpage

\medskip

\begin{figure}
$$\includegraphics[scale=.65,angle=0]{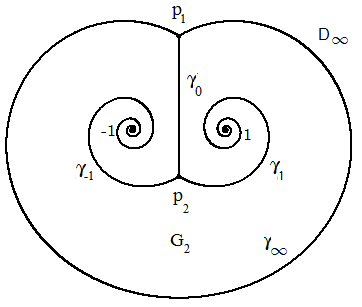} $$
\vspace{-1cm} %
\caption{3d. One circle domain. Case \textbf{6.3(b1)} with symmetric domains.} 
\end{figure}


\medskip

\begin{figure}
$$\includegraphics[scale=.65,angle=0]{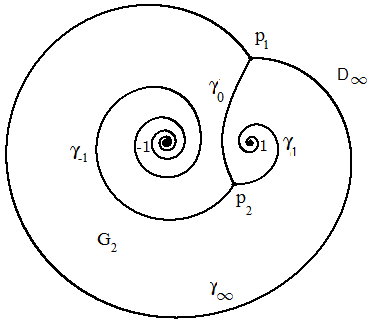} $$
\vspace{-1cm} %
\caption{3e. One circle domain. Case \textbf{6.3(b1)} with non-symmetric domains.} 
\end{figure}



\textbf{IV.} Configurations with one circle domain and two
strip domains. %

 \FloatBarrier

\medskip

\begin{figure}
$$\includegraphics[scale=.65,angle=0]{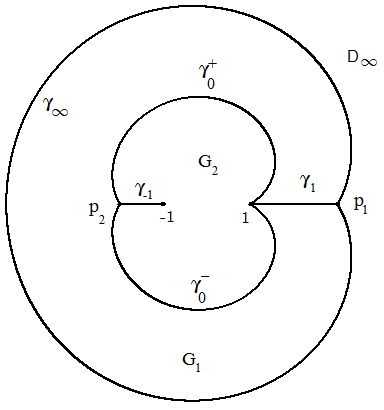} $$
\vspace{-1cm} %
\caption{4a. One circle domain. Case \textbf{6.3(b2)} with symmetric domains.} 
\end{figure}

\newpage

\medskip

\begin{figure}
$$\includegraphics[scale=.65,angle=0]{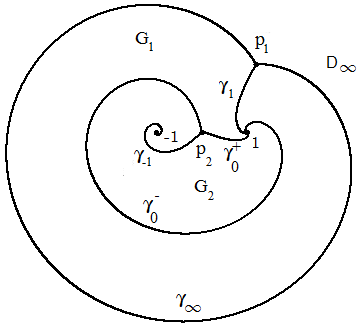} $$
\vspace{-1cm} %
\caption{4b. One circle domain. Case \textbf{6.3(b2)} with non-symmetric domains.} 
\end{figure}



\medskip

\textbf{V.} Degenerate configurations. %

 \FloatBarrier

\medskip

\begin{figure}
$$\includegraphics[scale=.55,angle=0]{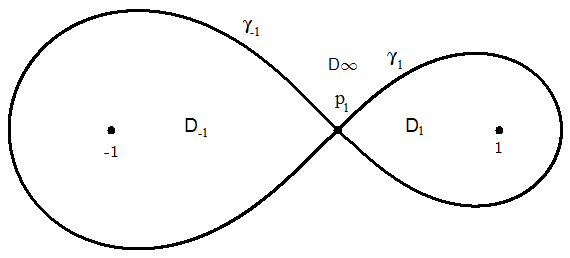} $$
\vspace{-1cm} %
\caption{5a. Degenerate case with $-1<p_1=p_2<1$.} 
\end{figure}


\medskip

\vspace{2cm}

\begin{figure}
$$\includegraphics[scale=.6,angle=0]{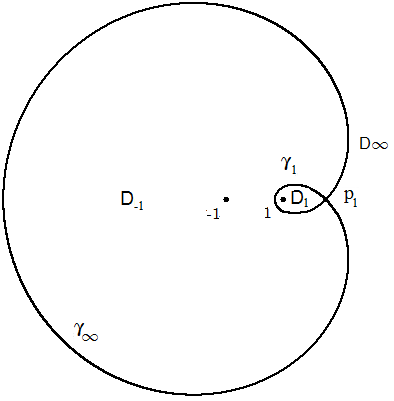} $$
\vspace{-1cm} %
\caption{5b. Degenerate case with $p_1=p_2>1$.} 
\end{figure}

\newpage

\medskip

\begin{figure}
$$\includegraphics[scale=.75,angle=0]{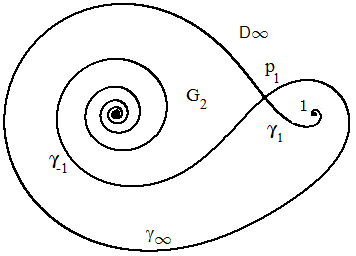} $$
\vspace{-1cm} %
\caption{5c. Degenerate case with $p_1=p_2$, $\Im p_1>0$.} 
\end{figure}

\newpage

\begin{figure}
$$\includegraphics[scale=.75,angle=0]{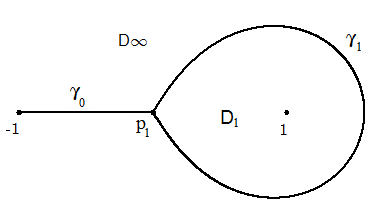} $$
\vspace{-1cm} %
\caption{5d. Degenerate case with $p_2=-1$, $-1<p_1<1$.} 
\end{figure}

\newpage

\begin{figure}
$$\includegraphics[scale=.6,angle=0]{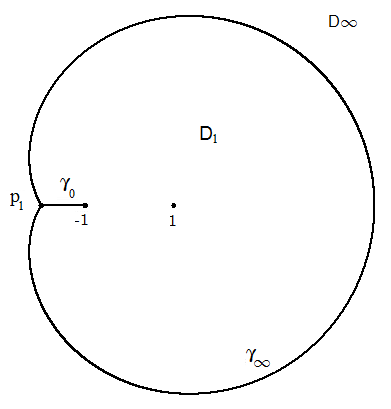} $$
\vspace{-1cm} %
\caption{5e. Degenerate case with $p_2=-1$, $p_1<-1$.} 
\end{figure}

\medskip

\begin{figure}[p]
$$\includegraphics[scale=.6,angle=0]{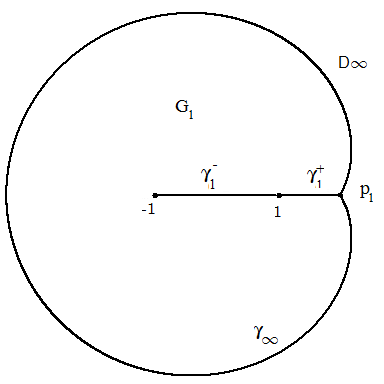} $$
\vspace{-1cm} %
\caption{5f. Degenerate case with $p_2=-1$, $p_1>1$.} 
\end{figure}

\begin{figure}[p]
$$\includegraphics[scale=.65,angle=0]{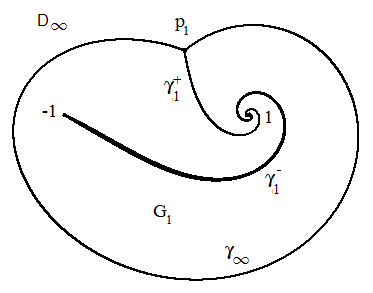} $$
\vspace{-1cm} %
\caption{5g. Degenerate case with $p_2=-1$, $\Im p_1>0$.} 
\end{figure}


\FloatBarrier

\textbf{VI.} Type regions. %

\vspace{2cm}

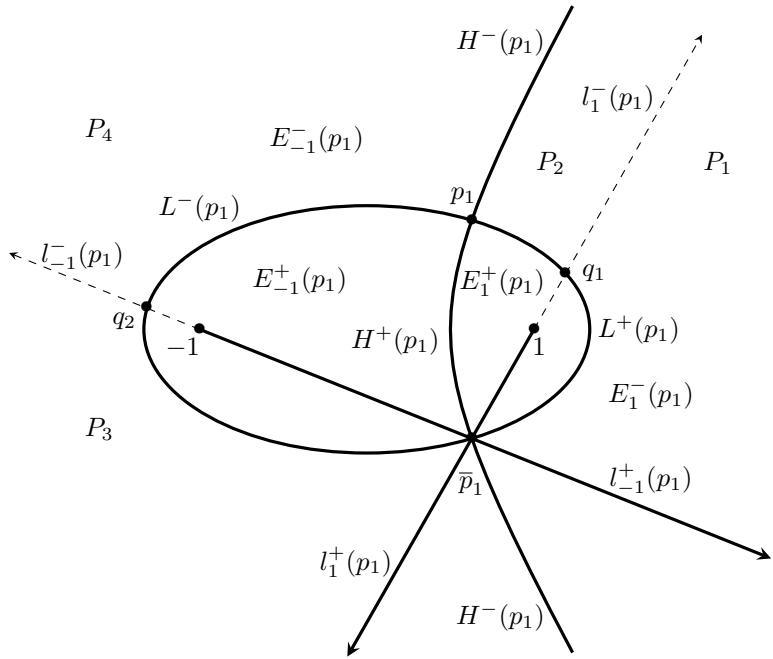
\begin{figure}[h]
\centering %
\hspace{-5.3cm} %
\begin{minipage}{0.6\linewidth}
\begin{tikzpicture}
    [inner sep=1mm,
     place/.style={circle,draw=blue!50,
     fill=blue!20,thick},
     scale=2.2]%


\draw [black, very thick](0,0) ellipse (4/3 and 0.7453559923); %
\node at (1,0) [] {$\bullet$}; %
\node at (1.03,0) [below] {$1$}; %
\node at (-1,0) [] {$\bullet$}; %
\node at (-1.1,0) [below] {$-1$}; %
\node at (1.187,0.341) [] {$\bullet$}; %
\node at (1.24,0.341) [right] {$q_1$}; %
\node at (-1.317,0.132) [] {$\bullet$}; %
\node at (-1.33,0.05) [left] {$q_2$}; %

     \draw [black, very thick] plot[domain=-1.55:1.55] ({(1/2)*(exp(\x)+exp(-\x))/2},{sqrt(3)/2*(exp(\x)-exp(-\x))/2});

   \draw [->,>=stealth,black, very thick]
   (1,0) to (1-3*0.3722022787,-3*0.6575636373);
   \draw [->,>=stealth,dashed]
   (1,0)to (1+2.7*0.3722022787,2.7*0.6575636373);

\draw [->,>=stealth,black, very thick]
   (-1,0)to (-1+2.1*1.627797721,-2.1*0.6575636373);
\draw [->,>=stealth,dashed] 
   (-1,0)to (-1-0.7*1.627797721,0.7*0.6575636373);
   \node at (0.6277977213,-0.6575636373) {$\bullet$};
    \node at (0.6277977213,-0.6575636373-0.15) [below] {$\overline{p}_1$};
    \node at (0.6277977213,0.6575636373) {$\bullet$};
 \node at (0.6277977213-0.05,0.6575636373+0.05) [above] {$p_1$}; %
 \node at (0.17,-0.2) [above] {$H^+(p_1)$}; %
  \node at (0.8,1.6) [above] {$H^-(p_1)$}; %
  \node at (0.8,-1.6) [below] {$H^-(p_1)$}; %
  \node at (4/3,0) [right] {$L^+(p_1)$}; %
  \node at (-1,0.6) [above] {$L^-(p_1)$}; %

  \node at (0.81,0.15) [above] {$E_1^+(p_1)$}; %
   \node at (-0.4,0.15) [above] {$E_{-1}^+(p_1)$}; %
\node at (1.7,-0.25) [below] {$E_1^-(p_1)$}; %
\node at (-0.3,1) [above] {$E_{-1}^-(p_1)$}; %

\node at (0.2,-1.4) [left] {$l_1^+(p_1)$}; %
\node at (1.77,1.4) [left] {$l_1^-(p_1)$}; %

\node at (1.7,-1.05) [above] {$l_{-1}^+(p_1)$}; %
\node at (-1.7,0.3) [above] {$l_{-1}^-(p_1)$}; %
\node at (2.1,1) [] {$P_1$}; %
\node at (1.1,1) [] {$P_2$}; %
\node at (-1.6,1.2) [] {$P_4$}; %
\node at (-1.6,-0.6) [] {$P_3$};

  \end{tikzpicture}
\end{minipage}
 \caption{6. Type regions.}
\end{figure}


\FloatBarrier

\vspace{2cm}

\textbf{VII.} Figures for the proof of Theorem~4. %

\vspace{1cm}
\begin{figure}[h]
$$\includegraphics[scale=.23,angle=0]{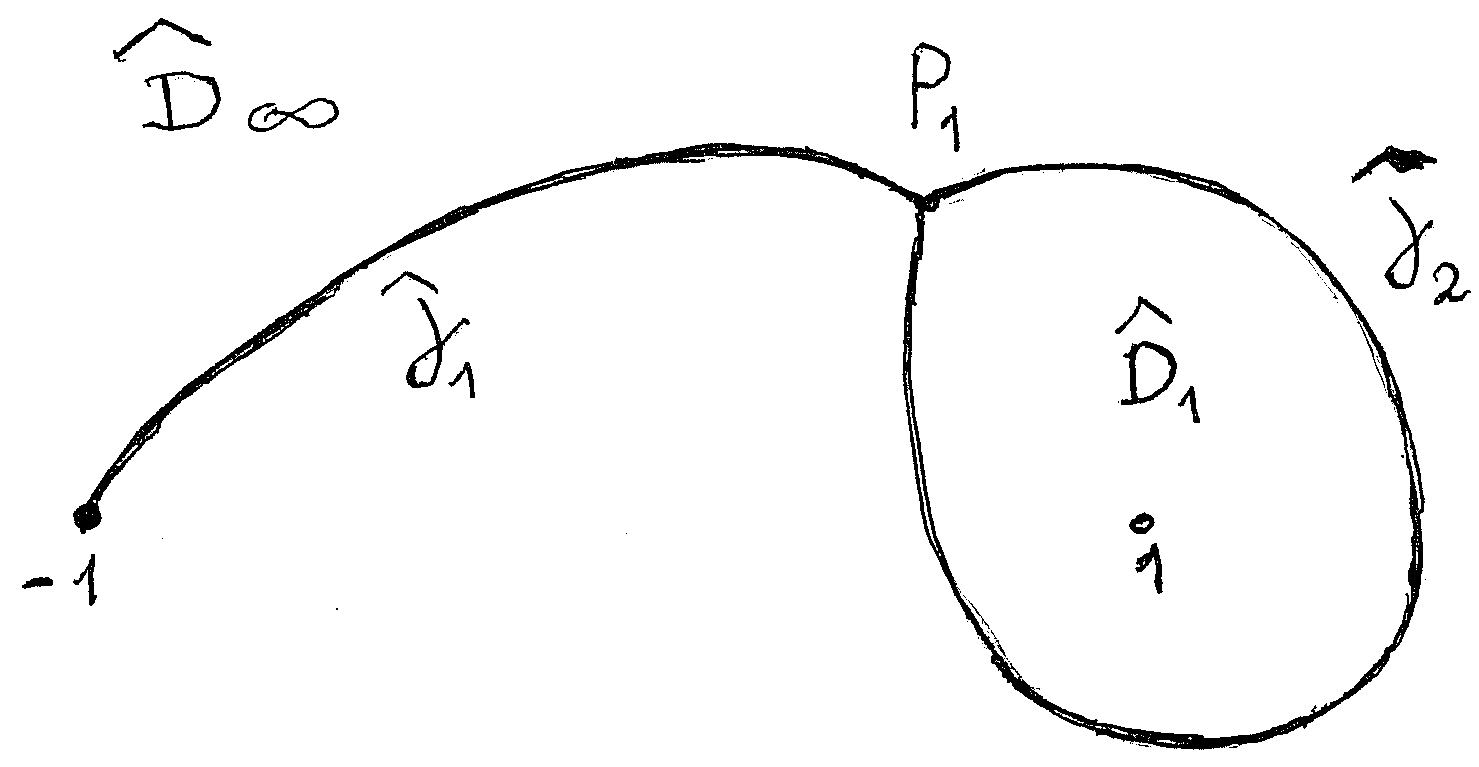} $$
\vspace{-1cm} %
\caption{7a. Proof of Theorem~4: Impossible limit configuration.} 
\end{figure}

\medskip


\begin{figure}
$$\includegraphics[scale=.25,angle=0]{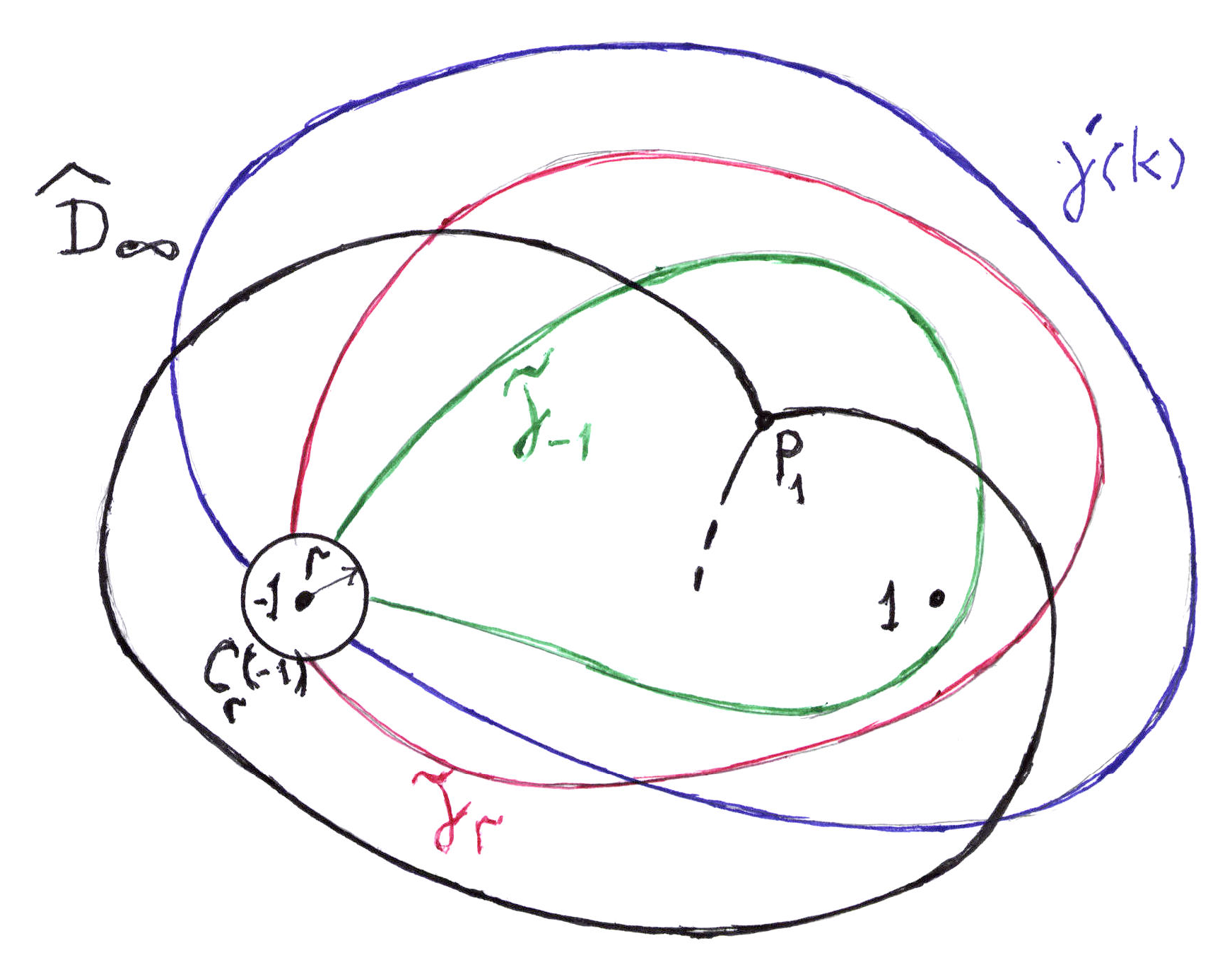}$$
\vspace{-1cm} %
\caption{7b. Proof of Theorem~4: Limit configuration.} 
\end{figure}

\medskip

\vspace{6cm}


\begin{figure}[b]
$$\includegraphics[scale=.25,angle=0]{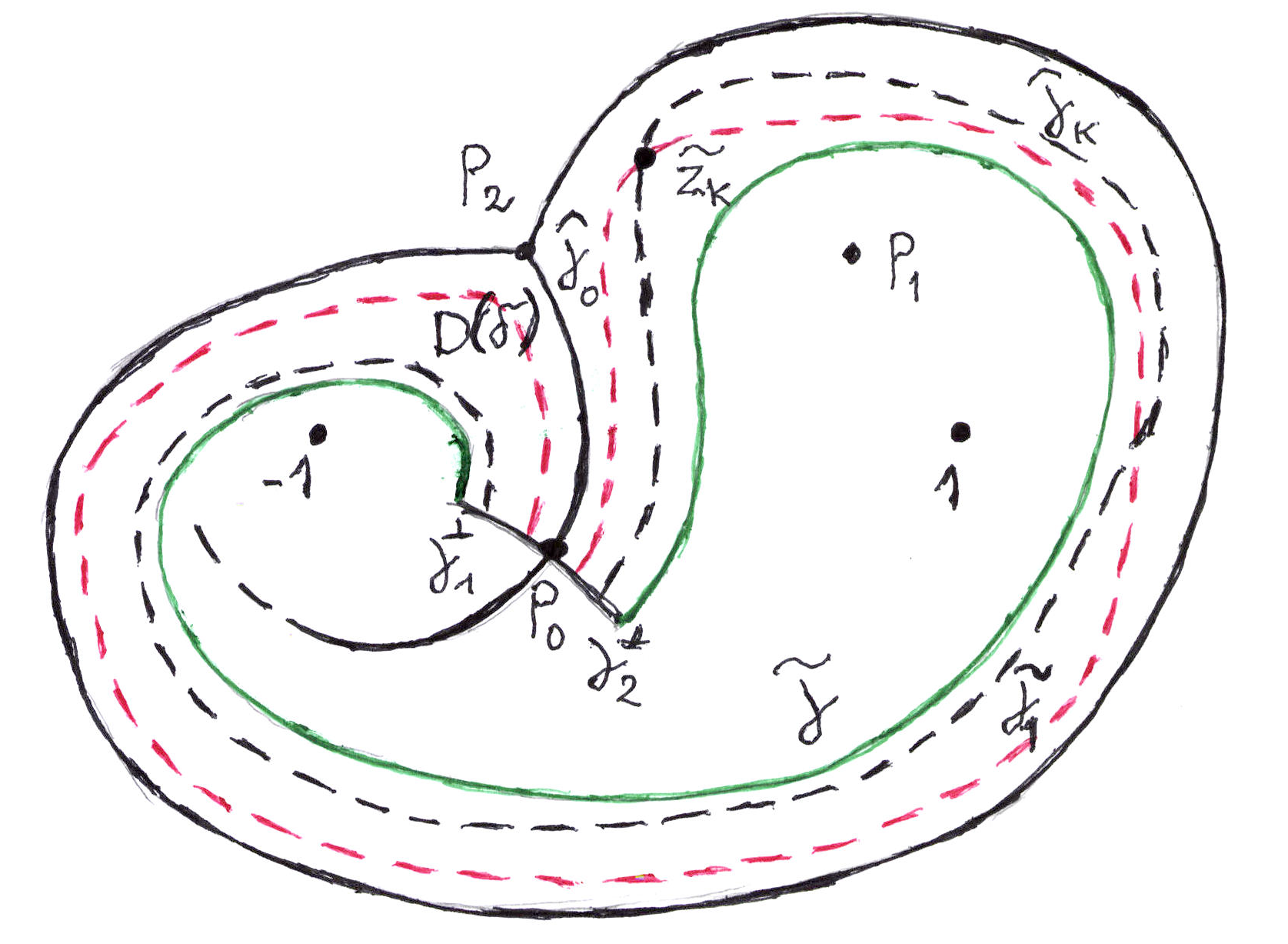}$$
\vspace{-1cm} %
\caption{7c. Proof of Theorem~4: $Q^0$-rectangle $D(\delta)$ with trajectories.} 
\end{figure}

\FloatBarrier

\newpage

 \textbf{VIII.} Geodesics and loops in simple cases.


\begin{figure*}[h]
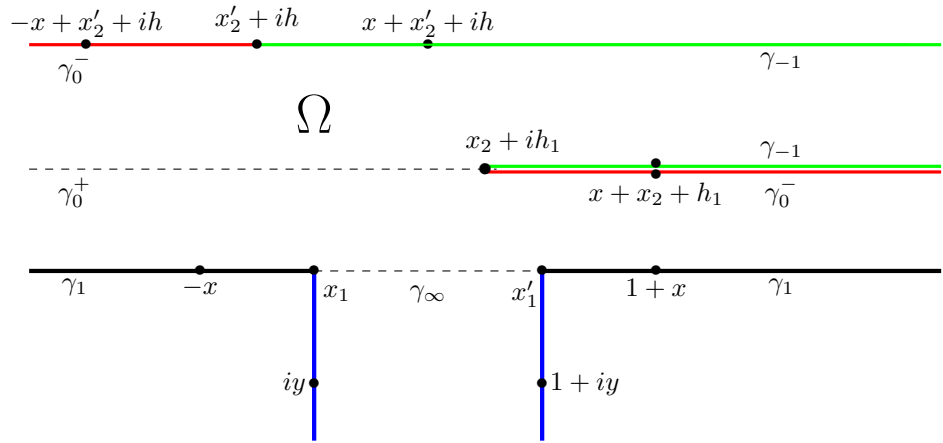

\centering
\hskip -4cm
\begin{minipage}{0.60\linewidth}  %

\end{minipage}

\caption{11. Domain $\Omega$ and identification rules. }
\end{figure}


\FloatBarrier


\end{document}